\begin{document}
\newtheorem{theorem}{Theorem}
\newtheorem{proposition}{Proposition}
\newtheorem{lemma}{Lemma}
\newtheorem{corollary}{Corollary}
\newtheorem{conjecture}{Conjecture}
\numberwithin{equation}{section}
\renewcommand{\thefootnote}{\fnsymbol{footnote}}
\newcommand{\dif}{\mathrm{d}}
\newcommand{\intz}{\mathbb{Z}}
\newcommand{\ratq}{\mathbb{Q}}
\newcommand{\natn}{\mathbb{N}}
\newcommand{\comc}{\mathbb{C}}
\newcommand{\rear}{\mathbb{R}}
\newcommand{\prip}{\mathbb{P}}
\newcommand{\uph}{\mathbb{H}}
\newcommand{\logl}{\mathcal{L}}

\title{\bf Gaps between Primes in Beatty Sequences}
\author{Roger C. Baker and Liangyi Zhao}
\date{\today}
\maketitle

\begin{abstract}
In this paper, we study the gaps between primes in Beatty sequences following the methods in the recent breakthrough of \cite{May}.
\end{abstract}

2010 Mathematics Subject Classficiation: 11B05, 11L20, 11N35

\section{Introduction}

Let $p_n$ denote the $n$-th prime and $t$ a natural number with $t \geq 2$.  It has long been conjectured that
\[ \liminf_{n \to \infty} \left( p_{n+t-1} - p_n \right) < \infty . \]
This was established recently for $t=2$ by Y. Zhang \cite{Zhang} and shortly after for all $t$ by J. Maynard \cite{May}.  Maynard showed that for $N > C(t)$, the interval $[N, 2N)$ contains a set $\mathcal{S}$ of $t$ primes of diameter
\[ D ( \mathcal{S} ) \ll t^3 \exp (4t) , \]
where
\[ D ( \mathcal{S} ) : = \max \{ n : n \in \mathcal{S} \} - \min \{ n : n \in \mathcal{S} \} . \]

In the present paper, we adapt Maynard's method to prove a similar result where $\mathcal{S}$ is contained in a prescribed set $\mathcal{A}$ (see Theorem~\ref{gentheo}).  We then work out applications (Theorems \ref{bea1} and \ref{bea2}) to a section of a Beatty sequence, so that
\[ \mathcal{A} = \{ [ \alpha m + \beta ] : m \geq 1 \} \cap [ N, 2N ) . \]
The number $\alpha$ is assumed to be irrational with $\alpha > 1$, while $\beta$ is a given real number.  We require an auxiliary result (Theorem~\ref{beabomvino}) for the estimation of errors of the form
\[ \sum_{\mathclap{\substack{N \leq n < N' \\ \gamma n \in I \bmod{1} \\ n \equiv a \bmod{q}}}} \Lambda(n) - \frac{(N-N') |I|}{\varphi(q)} , \]
where $I$ is an interval of length $|I| < 1$ and $\gamma = \alpha^{-1}$.  Theorem~\ref{beabomvino} is of ``Bombieri-Vinogradov type"; for completeness, we include a result of Barban-Davenport-Halberstam type for these errors (Theorem~\ref{beabardavhal}). \newline

We note that Chua, Park and Smith \cite{CPS} have already used Maynard's method to prove the existence of infinitely many sets of $k$ primes of diameter at most $C = C(\alpha, k)$ in a Beatty sequence $[\alpha n]$, where $\alpha$ is irrational and of finite type.  However, no explicit bound for $C$ is given. \newline

In this paragraph, we introduce some notations to be used throughout this paper.  We suppose that $t \in \natn$, $N \geq C(t)$ and write $\logl = \log N$, 
\[ D_0 = \frac{\log \logl}{\log \log \logl} . \]
Moreover, $(d,e)$ and $[d,e]$ stand for the great common divisor and the least common multiple of $d$ and $e$, respectively.  $\tau(q)$ and $\tau_k(q)$ are the usual divisor functions.  $\| x \|$ is the distance of between $x \in \rear$ and the nearest integer.  Set
\[ P(z) = \prod_{p < z} p \; \mbox{with} \; z \geq 2 \; \; \; \mbox{and} \; \; \; \psi(n,z) = \left\{ \begin{array}{cl} 1 & \mbox{if} \; (n, P(z)) = 1, \\ 0 & \mbox{otherwise}. \end{array} \right. \]
$X (E; n)$ stands for the indicator function of a set $E$ and $\prip$ for the set of primes.  Let $\varepsilon$ be a positive constant, sufficiently small in terms of $t$.  The implied constant ``$\ll$", when it appears, may depend on $\varepsilon$ and on $A$ (if $A$ appears in the statement of the result).  ``$F \asymp G$" means both $F \ll G$ and $G \ll F$ hold.  As usual, $e(y) = \exp(2\pi i y)$, and $o(1)$ indicates a quantity tending to 0 as $N$ tends to infinity.  Furthermore,
\[ \sum_{\chi \bmod{q}}  , \; \; \; \; \; \; \sideset{}{'} \sum_{\chi \bmod{q}} , \; \; \; \; \; \; \sideset{}{^{\star}} \sum_{\chi \bmod{q}} \]
denote, respectively, a sums over all Dirichlet characters modulo $q$, a sum over nonprincipal characters modulo $q$ and a sum restricted to primitive characters, other than $\chi = 1$, modulo $q$.  We write $\hat{\chi}$ for the primitive character that induces $\chi$.  A set $\mathcal{H} = \{ h_1 , \cdots, h_k \}$ of distinct non-negative integers is {\it admissible} if for every prime $p$, there is an integer $a_p$ such that $a_p \not\equiv h \pmod{p}$ for all $h \in \mathcal{H}$. \newline

In Sections 1 and 2, let $\theta$ be a positive constant.  Let $\mathcal{A}$ be a subset of $[N, 2N) \cap \natn$.  Suppose that $Y>0$ and $Y/q_0$ is an approximation to the cardinality of $\mathcal{A}$, $\# \mathcal{A}$.  Let $q_0$, $q_1$ be given natural numbers not exceeding $N$ with $(q_1, q_0P(D_0)) =1$ and $\varphi(q_1)=q_1 (1+o(1))$.  Suppose that $n \equiv a_0 \pmod{q_0}$ for all $n \in \mathcal{A}$ with $(a_0,q_0)=1$.  An admissible set $\mathcal{H}$ is given with
\[ h \equiv 0 \pmod{q_0} \; (h \in \mathcal{H}) \]
and
\begin{equation} \label{divcond}
p | h-h', \; \mbox{with} \; h, h' \in \mathcal{H}, \; h \neq h', p > D_0 \; \mbox{implies} \; p | q_0.
\end{equation}
We now state ``regularity conditions" on $\mathcal{A}$.
\begin{enumerate}[(I)]

\item We have
\begin{equation} \label{regcond1}
\sum_{\mathclap{\substack{q \leq N^{\theta} \\ (q, q_0q_1)=1}}} \mu^2(q) \tau_{3k}(q) \left| \sum_{n \equiv a_q \bmod{qq_0}} X ( \mathcal{A} ; n) - \frac{Y}{qq_0} \right| \ll \frac{Y}{q_0 \logl^{k+\varepsilon}}
\end{equation}
(any $a_q \equiv a_0 \pmod{q_0}$).

\item There are nonnegative functions $\varrho_1, \cdots , \varrho_s$ defined on $[N, 2N)$ (with $s$ a constant, $0 < a \leq s$) such that
\begin{equation} \label{regcond2}
X \left( \prip; n \right) \geq \varrho_1(n) + \cdots + \varrho_a(n) - \left( \varrho_{a+1} (n) + \cdots + \varrho_s (n) \right)
\end{equation}
for $n \in [N, 2N)$.  There are positive $Y_{g,m}$ ($g=1, \cdots, s$ and $m= 1, \cdots , k$) with
\[ Y_{g,m} = Y \left( b_{g,m} + o(1) \right) \logl^{-1} . \]
where the positive constants $b_{g,m}$ satisfy
\begin{equation} \label{bbound}
 b_{1,m} + \cdots + b_{a,m} - \left( b_{a+1,m} + \cdots + b_{s,m} \right) \geq b > 0 ,
 \end{equation}
for $m=1, \cdots, k$.  Moreover, for $m \leq k$, $g \leq s$ and any $a_q \equiv a_0 \pmod{q_0}$ with $(a_q,q)=1$ defined for $q \leq x^{\theta}$, $(q,q_0q_1)=1$, we have
\begin{equation} \label{regcond3}
\sum_{\substack{q \leq N^{\theta} \\ (q,q_0q_1)=1}} \mu^2(q) \tau_{3k} (q) \left| \sum_{n \equiv a_q \bmod{qq_0}} \varrho_g(n) X \left( \left( \mathcal{A} + h_m \right) \cap \mathcal{A} ; n \right) - \frac{Y_{g,m}}{\varphi(q_0q)} \right| \ll \frac{Y}{\varphi(q_0) \logl^{k+\varepsilon}} .
\end{equation}
Finally, $\varrho_g(n) =0$ unless $(n, P (N^{\theta/2})) = 1$.

\end{enumerate}

\begin{theorem} \label{gentheo}
Under the above hypotheses on $\mathcal{H}$ and $\mathcal{A}$, there is a set $\mathcal{S}$ of $t$ primes in $\mathcal{A}$ with diameter not exceeding $D(\mathcal{H})$, provided that $k \geq k_0 (t,b,\theta)$ ($k_0$ is defined at the end of this section).
\end{theorem}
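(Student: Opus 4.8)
\emph{Plan of proof.} The plan is to run Maynard's $k$-dimensional sieve, implementing the $W$-trick through the congruence $n\equiv a_0\pmod{q_0}$ together with the coprimality to $q_0q_1P(D_0)$, and to read off positivity directly from the two regularity conditions. Fix a smooth $F=F(x_1,\dots,x_k)$, not identically zero, supported on the simplex $\{x_i\ge0,\ x_1+\cdots+x_k\le1\}$, set $R=N^{\theta/2}$ (shrunk by an arbitrarily small power of $N$ if convenient), and let $w_n=\big(\sum_{d_i\mid n+h_i\,\forall i}\lambda_{d_1,\dots,d_k}\big)^2$ be the Maynard weights attached to $F$, where $\lambda_{d_1,\dots,d_k}$ is supported on squarefree $d=d_1\cdots d_k<R$ coprime to $q_0q_1P(D_0)$. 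Consider
\[
S:=\sum_{n}X(\mathcal A;n)\left(\sum_{m=1}^{k}X(\prip;n+h_m)\,X(\mathcal A;n+h_m)-(t-1)\right)w_n .
\]
Since $w_n\ge0$ and the bracket is an integer, $S>0$ forces some $n\in\mathcal A$ for which at least $t$ of $n+h_1,\dots,n+h_k$ are simultaneously prime and in $\mathcal A$; these form a set $\mathcal S\subseteq\mathcal A$ of $t$ primes with $D(\mathcal S)\le\max_mh_m-\min_mh_m=D(\mathcal H)$. So it suffices to prove $S>0$ once $k$ is large.

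Split $S=\sum_{m=1}^kS_2^{(m)}-(t-1)S_1$ with $S_1=\sum_nX(\mathcal A;n)w_n$ and $S_2^{(m)}=\sum_nX(\mathcal A;n)X(\prip;n+h_m)X(\mathcal A;n+h_m)w_n$. To evaluate $S_1$, expand the square, interchange summations, and replace $\sum_{n\equiv a_q\bmod{qq_0}}X(\mathcal A;n)$ — with $q=\prod_i[d_i,e_i]<R^2\le N^{\theta}$ — by $Y/(qq_0)$ using \eqref{regcond1}; the factor $\mu^2(q)\tau_{3k}(q)$ and the saving $\logl^{-(k+\varepsilon)}$ are precisely calibrated to absorb the $\ll\tau(d_1\cdots d_k)\tau(e_1\cdots e_k)$ from the individual terms and the $\ll\logl^{k}$ from their number, leaving Maynard's main term
\[
S_1=(1+o(1))\,\frac{Y}{q_0}\left(\frac{\varphi(q_0)}{q_0}\right)^{k}(\log R)^{k}\,I_k(F),\qquad I_k(F)=\int_{[0,1]^k}F^2 .
\]
For $S_2^{(m)}$, first apply \eqref{regcond2}: $X(\prip;n+h_m)\ge\sum_{g\le a}\varrho_g(n+h_m)-\sum_{g>a}\varrho_g(n+h_m)$. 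Since $\varrho_g(n+h_m)=0$ unless $(n+h_m,P(N^{\theta/2}))=1$, expanding the square forces $d_m=e_m=1$, so the surviving modulus is again $<R^2\le N^{\theta}$; shifting $n\mapsto n+h_m$ turns $X(\mathcal A;n)X(\mathcal A;n+h_m)$ into $X((\mathcal A+h_m)\cap\mathcal A;\,\cdot\,)$, so \eqref{regcond3} applies termwise with an equally harmless error (here \eqref{divcond} guarantees that no prime $p>D_0$ with $p\nmid q_0$ divides any $h_i-h_m$, so the main term is Maynard's with no extra local factors). Using $Y_{g,m}=Y(b_{g,m}+o(1))\logl^{-1}$, summing over $g$ with the signs of \eqref{regcond2}, and invoking \eqref{bbound} together with $J_k^{(m)}(F)\ge0$, one obtains
\[
S_2^{(m)}\ge(1+o(1))\,\frac{Y}{\varphi(q_0)}\left(\frac{\varphi(q_0)}{q_0}\right)^{k}\frac{(\log R)^{k+1}}{\logl}\,b\,J_k^{(m)}(F),\qquad J_k^{(m)}(F)=\int_{[0,1]^{k-1}}\left(\int_0^1F\,\dif x_m\right)^{2}.
\]

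Since $\log R=(\theta/2)\logl$ and $q_0\ge\varphi(q_0)$, division gives
\[
\frac{S}{S_1}\ge(1+o(1))\,\frac{\theta b}{2}\cdot\frac{\sum_{m=1}^kJ_k^{(m)}(F)}{I_k(F)}-(t-1),
\]
and $S_1>0$. Hence $S>0$ provided $F$ can be chosen with $\tfrac{\theta b}{2}\sum_mJ_k^{(m)}(F)/I_k(F)>t-1$, i.e.\ $\tfrac{\theta b}{2}M_k>t-1$ (allowing for an arbitrarily small loss in the exponent of $R$), where $M_k=\sup_F\sum_mJ_k^{(m)}(F)/I_k(F)$ is Maynard's quantity. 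By the variational estimate of \cite{May}, $M_k\to\infty$ (indeed $M_k>\log k-2\log\log k-2$ for $k$ large), so this inequality holds for all $k\ge k_0(t,b,\theta)$, which is the definition of $k_0$; therefore $S>0$ and the proof is complete.

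The sieve identities above are exactly Maynard's and transcribe verbatim, so the real content is the error analysis. One must check that after expanding a $k$-fold square the moduli never exceed $N^{\theta}$ — this is what dictates $R\le N^{\theta/2}$ and the support requirement $(n+h_m,P(N^{\theta/2}))=1$ on the $\varrho_g$ — and, above all, that the resulting errors, which carry $\ll\tau(d_1\cdots d_k)\tau(e_1\cdots e_k)\ll\tau_{3k}(q)$-sized divisor factors spread over $\ll\logl^{k}$ residue classes, are simultaneously beaten by \eqref{regcond1} and \eqref{regcond3}. Verifying that $\tau_{3k}$ together with the power saving $\logl^{k+\varepsilon}$ is exactly strong enough — and that the three-piece data $q_0,q_1,P(D_0)$ play the role of a single primorial cleanly, leaving the Euler factors at primes dividing $q_0$ (where $n+h_i\equiv a_0$) intact — is the main obstacle; everything downstream of it is the calculus-of-variations lower bound for $M_k$ from \cite{May}.
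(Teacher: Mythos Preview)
Your overall architecture matches the paper's exactly: build Maynard weights $w_n$, set $S=\sum_{m}S_2^{(m)}-(t-1)S_1$, evaluate $S_1$ and $S_2^{(m)}$ from the regularity hypotheses \eqref{regcond1}, \eqref{regcond3}, and conclude via $M_k>2(t-1)/(b\theta)$. But your implementation of the $W$-trick is incomplete, and this is a genuine gap. Requiring $\lambda_{\mathbf d}$ to be supported on moduli coprime to $q_0q_1P(D_0)$ is only half of the trick; the other half, which you omit, is to set $w_n=0$ unless $n\equiv\nu_0\pmod{W_2}$ with $W_2=\prod_{p\le D_0,\,p\nmid q_0}p$ and $\nu_0$ chosen (possible because $\mathcal H$ is admissible) so that $(\nu_0+h_m,W_2)=1$ for every $m$.

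Without that residue restriction your displayed asymptotic for $S_1$ is incorrect. With $w_n$ as you wrote it one actually obtains
\[
S_1=(1+o(1))\,\frac{Y}{q_0}\Bigl(\frac{\varphi(W_1)}{W_1}\Bigr)^{k}(\log R)^{k}I_k(F),
\qquad
S_2^{(m)}\ \text{carrying}\ \Bigl(\frac{\varphi(W_1)}{W_1}\Bigr)^{k+1},
\]
so the ratio $S_2^{(m)}/S_1$ acquires the extra factor $\dfrac{\varphi(W_1)}{W_1}\cdot\dfrac{q_0}{\varphi(q_0)}=(1+o(1))\dfrac{\varphi(W_2)}{W_2}\asymp 1/\log D_0\to 0$, and $S>0$ cannot be deduced. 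The point is that your $S_1$ still counts all $n$, including those for which some $n+h_m$ is divisible by a prime $p\le D_0$; these inflate $S_1$ but contribute nothing to $S_2^{(m)}$. Imposing $n\equiv\nu_0\pmod{W_2}$ makes the modulus in both sums $q=W_2\prod_i[d_i,e_i]$, so that \eqref{regcond1} inserts $1/W_2$ into $S_1$ while \eqref{regcond3} inserts $1/\varphi(W_2)$ into $S_2^{(m)}$, and the identity $\dfrac{\varphi(q_0)\varphi(q_1)\varphi(W_2)}{q_0q_1W_2}\cdot\dfrac{W_1}{\varphi(W_1)}=1$ (used in the paper) then gives exactly the ratio you asserted. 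With this single correction, the remainder of your outline coincides with the paper's Propositions~\ref{S1prop} and~\ref{S2prop} and their combination.
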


In proving Theorem~\ref{bea1}, we shall take $s=a=1$, $q_0=q_1=1$, $\rho_1(n) = X(\prip; n)$.  A more complicated example with $s=5$, of the inequality \eqref{regcond2}, occurs in proving Theorem~\ref{bea2}, but again, $q_0=q_1=1$.  We shall consider elsewhere a result in which $q_0$, $q_1$ are large.  Maynard's Theorem 3.1 in \cite{May2} overlaps with our Theorem~\ref{gentheo}, but neither subsumes the other.

\begin{theorem} \label{bea1}
Let $\alpha >1$, $\gamma= \alpha^{-1}$ and $\beta \in \rear$.  Suppose that
\begin{equation} \label{alphacond}
\| \gamma r \| \gg r^{-3} 
\end{equation}
for all $r \in \natn$.  Then for any $N > c_1(t, \alpha, \beta)$, there is a set of $t$ primes of the form $[\alpha m + \beta]$ in $[N, 2N)$ having diameter
\[ < C_2 \alpha (\log \alpha + t) \exp (8t) , \]
where $C_2$ is an absolute constant.
\end{theorem}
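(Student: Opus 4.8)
The plan is to deduce Theorem~\ref{bea1} from Theorem~\ref{gentheo}. The first step is the standard Beatty characterization: $n=[\alpha m+\beta]$ has a solution $m\in\natn$ if and only if $\gamma n\in I\pmod 1$, where $I=I(\alpha,\beta)$ is a fixed half-open interval of length $|I|=\gamma$; thus for $n\in[N,2N)$ we have $X(\mathcal A;n)=X_I(\gamma n+c_0)$ for a constant $c_0=c_0(\beta)$, with $X_I$ the indicator of $I$ on $\rear/\intz$. I would apply Theorem~\ref{gentheo} with $s=a=1$, $q_0=q_1=1$, $\varrho_1(n)=X(\prip;n)$, $Y=\gamma N$, and a fixed $\theta<1/4$. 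Then \eqref{regcond2} holds with equality; the requirement that $\varrho_1(n)=0$ unless $(n,P(N^{\theta/2}))=1$ holds since a prime in $[N,2N)$ has no prime factor $\le N^{\theta/2}<N$; the hypotheses $h\equiv 0\pmod{q_0}$, $(q_1,q_0P(D_0))=1$ and $\varphi(q_1)=q_1(1+o(1))$ are vacuous; and \eqref{divcond} holds once $N>c_1(t,\alpha,\beta)$, because $D(\mathcal H)$ will depend only on $t$ and $\alpha$, so $D(\mathcal H)<D_0$ for $N$ large. It then remains to choose $\mathcal H$ and to verify \eqref{regcond1}, \eqref{bbound} and \eqref{regcond3}.

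The heart of the argument, and the step I expect to be the main obstacle, is the construction of $\mathcal H$. Fix $b\in(0,1)$ and put $k=k_0(t,b,\theta)$. With $\varrho_1=X(\prip;\cdot)$ one has $X((\mathcal A+h_m)\cap\mathcal A;n)=X_{J_m}(\gamma n+c_0)$ with $J_m=I\cap(I+\gamma h_m)$, and since the primes in $[N,2N)$ are equidistributed modulo $1$ with respect to $\gamma$ (a $q=1$ instance of Theorem~\ref{beabomvino}), one gets $Y_{1,m}=\frac{N}{\logl}|J_m|(1+o(1))=Yb_{1,m}\logl^{-1}(1+o(1))$ with $b_{1,m}=|J_m|/\gamma$. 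Hence \eqref{bbound} forces $\|\gamma h_m\|\le(1-b)\gamma$ for every $m$, i.e.\ $\mathcal H$ must lie in the Bohr set $B=\{h\ge 0:\ \|\gamma h\|\le(1-b)\gamma\}$ (when $\alpha<2$ this is automatic, so assume $\alpha\ge2$). The three-distance theorem describes $B\cap[0,M]$ in terms of the continued fraction of $\gamma$, and \eqref{alphacond} guarantees the convergent denominators of $\gamma$ grow no faster than $q_{j+1}\ll q_j^{3}$; the hard part will be to combine this with a sieving argument so as to extract from $B$ an \emph{admissible} $\mathcal H$ of size $k$ with $D(\mathcal H)\ll\alpha\,k\log k\,\log\alpha\ll C_2\,\alpha(\log\alpha+t)\exp(8t)$, for a suitable choice of $b$ and $\theta$ (so that $k_0(t,b,\theta)\ll\exp(8t)$) and an absolute $C_2$. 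Reconciling admissibility, the Bohr condition $\|\gamma h\|\le(1-b)\gamma$ imposed by \eqref{bbound}, and this explicit diameter bound — while $k_0$ itself depends on $b$ and $\theta$ — is the delicate point; the cruder construction $\mathcal H=q^\star\mathcal H'$ (with $q^\star\ll\alpha k\log k$ from Dirichlet's theorem and $\mathcal H'$ a standard admissible set) gives $D(\mathcal H)\ll\alpha(k\log k)^2$, which is likely too large, so the finer structure of $B$ must be used.

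With $\mathcal H$ in hand, \eqref{regcond1} follows from an elementary exponential-sum estimate. I would expand $X_I$ by Vaaler's approximation, $X_I(x)=|I|+\sum_{1\le|h|\le H}c_he(hx)+R(x)$ with $|c_h|\le\min(|I|,|h|^{-1})$ and $|R(x)|\le(H+1)^{-1}\sum_{|h|\le H}e(hx)$, taking $H=\logl^{B}$ with $B=B(k)$ large. For $q\le N^{\theta}$ the main term gives $|I|\,\#\{N\le n<2N:\ n\equiv a_q\bmod q\}=Y/q+O(1)$, each nontrivial frequency contributes a geometric sum $\ll\min(N/q,\|h\gamma q\|^{-1})$, and \eqref{alphacond} yields $\|h\gamma q\|^{-1}\ll(hq)^{3}$; a dyadic decomposition over the size of $\|h\gamma q\|$ then bounds $|\sum_{n\equiv a_q}X(\mathcal A;n)-Y/q|$ by $N/(Hq)+q^{1/3}N^{2/3}\logl^{O(1)}+1$, so summing against $\mu^2(q)\tau_{3k}(q)$ over $q\le N^{\theta}$ gives $\ll N\logl^{3k+1}/H+N^{2/3+4\theta/3}\logl^{O(1)}\ll Y\logl^{-(k+\varepsilon)}$, using $H\ge\logl^{4k+\varepsilon}$ and $\theta<1/4$ (this is exactly where the exponent $3$ in \eqref{alphacond} is needed). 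For \eqref{regcond3}, writing $X(\prip;n)=\Lambda(n)\logl^{-1}+O(\Lambda(n)\logl^{-2})$ on primes and discarding the negligible prime powers, the inner sum becomes $\frac1{\logl}\sum_{\substack{N\le n<2N,\ n\equiv a_q\bmod q\\ \gamma n\in J_m-c_0\bmod 1}}\Lambda(n)+O(\sqrt N)$; Theorem~\ref{beabomvino}, applied with the interval $J_m-c_0$ of length $<1$, bounds $\sum_{q\le N^{\theta}}\max_a|\cdots|$ by $N\logl^{-B'}$ for any $B'$, and inserting the weight $\mu^2(q)\tau_{3k}(q)$ by Cauchy–Schwarz against the trivial bound $\ll N\tau(q)/\varphi(q)$ yields $\ll Y\logl^{-(k+\varepsilon)}$, the main term matching $Y_{1,m}/\varphi(q)$. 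Having verified all hypotheses of Theorem~\ref{gentheo}, one obtains a set of $t$ primes of the form $[\alpha m+\beta]$ in $[N,2N)$ of diameter $\le D(\mathcal H)<C_2\,\alpha(\log\alpha+t)\exp(8t)$.
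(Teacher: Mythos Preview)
Your overall framework is exactly the paper's: apply Theorem~\ref{gentheo} with $q_0=q_1=1$, $s=a=1$, $\varrho_1=X(\prip;\cdot)$, $Y=\gamma N$, $\theta=1/4-\varepsilon$, and verify \eqref{regcond3} via Theorem~\ref{beabomvino} (with the weight $\mu^2(q)\tau_{3k}(q)$ inserted by Cauchy--Schwarz, just as you describe). So the strategy is right; the issue is the construction of~$\mathcal H$, which you flag as the ``main obstacle'' and leave incomplete.

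The paper's device here is much simpler than the continued-fraction/three-distance machinery you are reaching for, and it sidesteps the difficulty entirely by \emph{decoupling} admissibility from the Bohr condition. Let $h_1'',\dots,h_l''$ be the first $l$ primes exceeding~$l$. Every subset of $\{h_1'',\dots,h_l''\}$, and every translate of such a subset, is automatically admissible. Now apply a plain pigeonhole (Lemma~\ref{Baklem}\,\eqref{Baklem1}) to the $l$ points $-\gamma h_1'',\dots,-\gamma h_l''$ on the circle: some arc of length $\varepsilon\gamma$ contains at least $k\ge \varepsilon\gamma l$ of them, say $-\gamma h_1',\dots,-\gamma h_k'\in(\eta,\eta+\varepsilon\gamma)\pmod 1$. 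Choose $h\in\natn$ (bounded in terms of $\gamma$) with $\gamma h\in(\eta-\varepsilon\gamma,\eta)\pmod 1$ and set $h_m=h_m'+h$; then $-\gamma h_m\in(0,2\varepsilon\gamma)\pmod 1$, which is precisely what Lemma~\ref{intervallem} needs to give $|J_m|>(1-2\varepsilon)\gamma$, hence $b=1-2\varepsilon$. Thus $l\asymp \alpha k/\varepsilon$, $D(\mathcal H)\le h_l''\ll l\log l\ll \alpha k\,(\log\alpha+\log k)$, and with $k=k_0(t,b,\theta)\ll\exp(8t)$ from \eqref{mklowerbound} this gives the stated bound. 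No Diophantine input is used in this step at all; your worry about ``reconciling admissibility with the Bohr condition'' evaporates once you pigeonhole \emph{inside} an a~priori admissible set.

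Two smaller remarks. First, the role of \eqref{alphacond} in the paper is not to feed directly into an error term for \eqref{regcond1}; it is used once, to force $r\gg N^{1/4}$ in the Dirichlet approximation \eqref{gammaratapprox}, after which both \eqref{regcond1} and \eqref{regcond3} are handled by the Erd\H os--Tur\'an inequality (Lemma~\ref{Baklem}\,\eqref{Baklem2}) together with the exponential-sum bounds underlying Theorem~\ref{beabomvino}. Your proposed bound $q^{1/3}N^{2/3}$ for \eqref{regcond1} via the pointwise inequality $\|h\gamma q\|^{-1}\ll (hq)^3$ is not obviously correct as written (the dyadic count also throws off a term of size $N/q$ per scale), and in any case the paper's route through the approximation $b/r$ is cleaner. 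Second, the one-sided condition $-\gamma h_m\in(0,2\varepsilon\gamma)$ (rather than the symmetric $\|\gamma h_m\|\le(1-b)\gamma$) is what matches Lemma~\ref{intervallem}; the translation step above is what produces it.
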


\begin{theorem} \label{bea2}
Let $\alpha$ be irrational with $\alpha >1$ and $\beta \in \rear$.  Let $r \geq C_3 (\alpha, \beta)$ and
\[ \left| \frac{1}{\alpha} - \frac{b}{r} \right| < \frac{1}{r^2} , \; b \in \natn , \; (b,r) =1 . \]
Let $N = r^2$.  There is a set of $t$ primes of the form $[ \alpha n + \beta]$ in $[N, 2N)$ having diameter
\[ < C_4 \alpha \left( \log \alpha + t \right) \exp ( 7.743 t) , \]
where $C_4$ is an absolute constant.
\end{theorem}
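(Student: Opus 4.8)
The plan is to derive Theorem~\ref{bea2} from Theorem~\ref{gentheo} by the same route as Theorem~\ref{bea1}, applied with $q_0 = q_1 = 1$ (so $a_0 = 1$) to
\[
\mathcal{A} = \{ [\alpha m + \beta] : m \ge 1 \} \cap [N,2N), \qquad N = r^2,
\]
with $Y = \gamma N$. Everything rests on the elementary dictionary that an integer $n$ is of the form $[\alpha m + \beta]$ for some integer $m$ exactly when $\gamma n$ lies, modulo $1$, in a fixed half-open interval $I = I(\gamma,\beta)$ of length $|I| = \gamma < 1$; thus $X(\mathcal{A};n)$ is the indicator of $\{ n \in [N,2N) : \gamma n \in I \bmod 1 \}$, $\# \mathcal{A} = \gamma N (1 + o(1))$, and for a fixed shift $h$ the function $X((\mathcal{A}+h)\cap\mathcal{A};n)$ coincides, up to $O(1)$ boundary terms, with the indicator of $\gamma n \in I \cap (I+\gamma h) \bmod 1$. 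Since two intervals modulo $1$ of common length $\gamma$ intersect in a set of measure $\max(0, \gamma - \| \gamma h \|)$, we shall insist that every $h \in \mathcal{H}$ satisfy $\| \gamma h \| \le (1 - b_0)\gamma$ for a parameter $b_0 \in (0,1)$ fixed later; this forces $|I \cap (I+\gamma h)| \ge b_0\gamma$ and is the source of the lower bound \eqref{bbound}.

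First I would verify condition (I). Writing $n = a_q + q\ell$ with $\ell$ in an interval of length $\asymp N/q$, the inner sum in \eqref{regcond1} counts those $\ell$ with $\{ q\gamma\ell + a_q\gamma \} \in I$, so by the Erd\H{o}s--Tur\'an inequality its discrepancy is $\ll N/(qH) + \sum_{1 \le h \le H} \min(N/q, \| hq\gamma \|^{-1})$ for any $H \ge 1$. Summing against $\mu^2(q)\tau_{3k}(q)$ over $q \le N^{\theta}$ bounds the left side of \eqref{regcond1} by $O((N/H)\logl^{O(1)}) + O(\sum_{m \le N^{\theta}H}\tau(m)^{O(1)}\| m\gamma \|^{-1})$. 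Now the hypothesis $|\gamma - b/r| < r^{-2}$ with $(b,r) = 1$ forces $\| m\gamma \| \gg 1/r$ for every $m < r/2$, so $\sum_{m < r}\tau(m)^{O(1)}\| m\gamma \|^{-1} \ll r\logl^{O(1)} \ll N^{1/2}\logl^{O(1)}$; taking $H$ a large enough power of $\logl$ and $\theta < 1/2$, both error terms are $\ll Y/\logl^{k+\varepsilon}$. This is the step that uses $N = r^2$ decisively — it makes $r$ essentially a convergent denominator of $\gamma$, so the admissible range for $\theta$ is much wider than what the generic hypothesis \eqref{alphacond} of Theorem~\ref{bea1} would give.

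For condition (II) I would not take $\varrho_1 = X(\prip;\cdot)$, $s = a = 1$ as in Theorem~\ref{bea1}, but instead use a combinatorial decomposition of $\Lambda$ (a Vaughan- or Heath--Brown-type identity followed by a bounded number of Buchstab iterations) which, at the cost of an inequality, furnishes a lower bound for $X(\prip;n)$ built from type-I and well-factorable type-II convolutions; concretely this gives nonnegative $\varrho_1, \cdots, \varrho_5$, all vanishing unless $(n, P(N^{\theta/2})) = 1$, with $a$ of them contributing positively and the remaining $5-a$ a correction, satisfying \eqref{regcond2}. The distribution of each $\varrho_g$ in progressions subject to the extra constraint $\gamma n \in I \cap (I+\gamma h_m) \bmod 1$ — after Fourier-expanding that constraint with a Vaaler mollifier — is then controlled by a combination of Theorem~\ref{beabomvino}, the Beatty-number equidistribution already used for condition (I), and standard bilinear estimates, and this verifies \eqref{regcond3}. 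The main terms come out as $Y_{g,m} = Y(b_{g,m} + o(1))\logl^{-1}$ with $b_{g,m} = \delta_g\, |I \cap (I+\gamma h_m)|/\gamma$, where $\delta_g > 0$ is the density attached to $\varrho_g$, so \eqref{bbound} holds with $b = b_0(\delta_1 + \cdots + \delta_a - \delta_{a+1} - \cdots - \delta_5) > 0$, the decomposition being chosen to have positive net yield at the level $\theta$ available here.

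Finally I would construct $\mathcal{H}$ and extract the constant. Since $k = k_0(t, b, \theta)$ is bounded in terms of $t$ whereas $D_0 \to \infty$ with $N$, for $N$ large every pairwise difference of a $k$-element set of diameter bounded in terms of $\alpha, \beta, t$ is automatically $D_0$-smooth, so \eqref{divcond} holds vacuously with $q_0 = 1$; what remains is to choose an admissible $k$-element subset of the sparse set $\{ h \ge 0 : \| \gamma h \| \le (1 - b_0)\gamma \}$, which for $1 - b_0$ not too small contains, for every $j \ge 1$, an integer within $1 - b_0$ of $j\alpha$. A sieve (or greedy) construction then produces such an $\mathcal{H}$ with $D(\mathcal{H}) \ll \alpha(\log\alpha + t)\exp(7.743\, t)$, the exponent emerging from $\log k_0(t, b, \theta)$ once one takes $b_0$ close to $1$ (to shrink $k_0$), balances this against the resulting sparsity of the available shifts, and optimizes the decomposition $\varrho_1, \cdots, \varrho_5$ and the level $\theta$. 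The main obstacle is precisely this optimization: because $k_0(t, b, \theta)$ depends exponentially on the pair $(b, \theta)$, pushing the exponent down to $7.743$ requires simultaneously the largest $\theta$ compatible with both \eqref{regcond1} and Theorem~\ref{beabomvino}, and, at that $\theta$, a decomposition $\varrho_1, \cdots, \varrho_5$ whose net yield — hence $b$ — is as large as possible, a delicate trade-off that the simpler setup behind Theorem~\ref{bea1} avoids.
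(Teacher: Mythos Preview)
Your outline is correct in spirit and matches the paper's strategy at the level of ``apply Theorem~\ref{gentheo} with $q_0=q_1=1$, $s=5$, a Buchstab-type lower bound for $X(\prip;n)$, and a choice of $\mathcal{H}$ forcing $|I\cap(I+\gamma h_m)|$ close to $\gamma$''. But the step that actually produces the exponent $7.743$ is missing, and the tool you cite for it does not suffice.

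The number $7.743$ is (up to $\varepsilon$) equal to $2/(b\theta)$ with $\theta=2/7$ and $b>0.90411$. You never commit to $\theta=2/7$, and your plan to verify \eqref{regcond3} ``by a combination of Theorem~\ref{beabomvino} \ldots\ and standard bilinear estimates'' cannot reach that level: Theorem~\ref{beabomvino} concerns $\Lambda$, not the sieve weights $\varrho_g$, and in any case its range is $q\le\min(r,N^{1/4})N^{-\varepsilon}$, which for $N=r^2$ is only $N^{1/4-\varepsilon}$. Feeding $\theta=1/4$ into $2/(b\theta)$ gives an exponent $\ge 8$, i.e.\ no improvement over Theorem~\ref{bea1}. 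The paper gets to $\theta=2/7$ by building, from scratch, exponential-sum estimates over characters twisted by $e(\gamma n)$ (Lemmas~\ref{doubleaddmultsum}--\ref{SsumtoW}), feeding them through a Harman-sieve framework (Lemmas~\ref{BaWelem}--\ref{primesumest}), and then checking that the specific five-term Buchstab decomposition of Lemma~\ref{buchstabdecomp} has all its pieces controllable at that level (Lemma~\ref{primechardetect}). None of this is a consequence of Theorem~\ref{beabomvino}, and calling it ``standard bilinear estimates'' hides the entire new content of the section.

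The second gap is the value of $b$. You write $b=b_0(\delta_1+\cdots-\delta_5)$ and say one should ``optimize the decomposition''; the paper instead fixes the decomposition of Lemma~\ref{buchstabdecomp}, identifies the residual ``bad'' region $D\subset E_2$ that cannot be handled at level $2/7$, and computes numerically (Lemma~\ref{intbound}) that
\[
\int_D \frac{1}{\alpha_1\alpha_2^2}\,\omega_0\!\left(\frac{1-\alpha_1-\alpha_2}{\alpha_2}\right)\dif\alpha_1\,\dif\alpha_2 < 0.09589,
\]
giving $b>(1-2\varepsilon)\cdot 0.90411$. Without naming $\theta=2/7$, the explicit decomposition, and this integral, there is no route to the constant $7.743$.
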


Theorem~\ref{bea2} improves Theorem~\ref{bea1} in that $\alpha$ can be any irrational number in $(1,\infty)$ and $7.743 < 8$, but we lose the arbitrary placement of $N$. \newline

Turning our attention to our theorem of Bombieri-Vinogradov type, we write
\[ E (N, N', \gamma, q, a ) = \sup_I \left| \sum_{\substack{ N \leq n < N' \\ \gamma n \in I \bmod{1} \\ n \equiv a \bmod{q}}} \Lambda(n) - \frac{(N'-N)|I|}{\varphi(q)} \right| . \]
Here, $I$ runs over intervals of length $|I| < 1$.

\begin{theorem} \label{beabomvino}
Let $A>0$, $\gamma$ be a real number and $b/r$ a rational approximation to $\gamma$,
\begin{equation} \label{gammaratapprox}
\left| \gamma - \frac{b}{r} \right| \leq \frac{1}{r N^{3/4}} , \; N^{\varepsilon} \leq r \leq N^{3/4}, \; (b,r)=1 .
\end{equation}
Then for $N < N' \leq 2N$ and any $A>0$, we have
\begin{equation} \label{beabomvinoeq}
\sum_{q \leq \min (r , N^{1/4}) N^{-\varepsilon}} \max_{(a,q)=1} E (N, N', \gamma, q, a) \ll N \logl^{-A} .
\end{equation}
\end{theorem}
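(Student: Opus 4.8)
The plan is to reduce the problem to a standard Bombieri--Vinogradov estimate by detecting the condition $\gamma n \in I \bmod 1$ with a truncated Fourier expansion, and then exploiting the rational approximation $b/r$ to $\gamma$ to turn the resulting exponential sums over arithmetic progressions into sums that can be handled by the large sieve together with classical zero-density or dispersion techniques. Concretely, first I would replace the sharp cut-off $X(\gamma n \in I)$ by a Selberg-type majorant/minorant trigonometric polynomial $\sum_{|h| \le H} \widehat{\chi_I}(h) e(h\gamma n)$ with $H$ a small power of $N$ (say $H = N^{\varepsilon/2}$ up to $N^{\text{small}}$), at the cost of an admissible error of size $O((N'-N)/H)$ per residue class, which after summing over $q \le N^{1/4}$ and over the $\le N$ values of $n$ contributes $\ll N^{1+\varepsilon/2}/H \cdot \log N$, acceptably small if $H$ is chosen as a suitable fixed power of $\log N$ times the needed savings — more carefully one takes $H$ a small power of $N$ and absorbs the loss. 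The $h=0$ term reproduces the main term $(N'-N)|I|/\varphi(q)$.

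The core of the argument is then to bound, for each $q$ in the range and each $a$ coprime to $q$,
\[
\sum_{\substack{N \le n < N' \\ n \equiv a \bmod q}} \Lambda(n)\, e(h\gamma n)
\]
for $1 \le |h| \le H$, and to show that after summing over $q \le \min(r,N^{1/4})N^{-\varepsilon}$ the total is $\ll N \log^{-A} N$. Here I would write $e(h\gamma n) = e(h(b/r)n)\, e(h(\gamma - b/r)n)$; the second factor varies slowly since $|h(\gamma-b/r)n| \le H \cdot N \cdot (rN^{3/4})^{-1} \le H N^{1/4}/r \le H N^{-\varepsilon'}$ over the whole range by \eqref{gammaratapprox}, so by partial summation it suffices to treat $e(h(b/r)n)$, a purely periodic factor of modulus $r$. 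Since $q \le r N^{-\varepsilon}$ and $(b,r)=1$, combining the progression $n \equiv a \bmod q$ with the character $e(h b n/r)$ amounts to studying $\Lambda(n)$ along arithmetic progressions to a modulus which, via CRT-type splitting in $q$ and $r$, is of size $qr/(q,r) \le r^2 \le N^{3/2}$ — too large for Bombieri--Vinogradov directly, which is where the real work lies.

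The hard part will be exactly this: the effective modulus $qr$ can be as large as $N^{3/2}$, well beyond the $N^{1/2}$ barrier, so a naive appeal to Bombieri--Vinogradov fails. The way through is to expand $e(hbn/r)$ into additive characters mod $r$, convert to Dirichlet characters mod $r$ via Gauss sums, and then apply the large-sieve inequality for the joint family of characters $\chi \bmod q$ (arising from the progression by orthogonality) twisted by characters $\psi \bmod r$; crucially one only needs an average over $q \le rN^{-\varepsilon}$ and a single fixed $r$, not an average over $r$, so the relevant mean value is $\sum_{q \le rN^{-\varepsilon}} \sideset{}{^\star}\sum_{\chi \bmod q} \sum_{\psi \bmod r} |\sum_n \Lambda(n)\chi\psi(n)|$, and the conductor of $\chi\psi$ is $\le qr \le r^2 = N$ on the "good" range. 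One then invokes the standard Bombieri--Vinogradov machinery (Vaughan's identity to split $\Lambda$ into Type I and Type II bilinear sums, then the large sieve) with modulus up to $N^{1-\varepsilon}$: the point is that $qr$, though it looks like $N^{3/2}$ in the worst case $q \asymp r \asymp N^{3/4}$, is in fact $\le \min(r,N^{1/4})N^{-\varepsilon} \cdot r \le N^{1/4}\cdot N^{3/4}\cdot N^{-\varepsilon} = N^{1-\varepsilon}$ because of the truncation $q \le \min(r,N^{1/4})N^{-\varepsilon}$ — that is precisely why the theorem restricts the $q$-sum to $\min(r,N^{1/4})$ rather than just $r$. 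With this observation the modulus is genuinely below $N^{1-\varepsilon}$, classical Bombieri--Vinogradov (or its proof) applies to the twisted sum, the $h$-sum over $|h|\le H$ and the Gauss-sum/Fourier-coefficient weights cost only $N^{o(1)}$, and one recovers the bound $N\log^{-A}N$; the error from the slowly varying factor $e(h(\gamma-b/r)n)$ is controlled throughout by partial summation as noted above.
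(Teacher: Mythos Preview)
There are two concrete gaps.

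First, the claim that $e\bigl(h(\gamma-b/r)n\bigr)$ is slowly varying is false in general. You assert $HN^{1/4}/r \le HN^{-\varepsilon'}$, which requires $r>N^{1/4}$; but \eqref{gammaratapprox} only guarantees $r\ge N^{\varepsilon}$. When $r$ is near $N^{\varepsilon}$ the total variation of that phase over $[N,2N)$ is of order $HN^{1/4-\varepsilon}$, so partial summation costs a genuine power of $N$, not a logarithm.

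Second, even in the range $r>N^{1/4}$ where the previous step is harmless, the assertion that ``classical Bombieri--Vinogradov (or its proof)'' handles combined moduli up to $qr\le N^{1-\varepsilon}$ is unjustified. After expanding $e(hbn/r)$ into characters $\psi\bmod r$ with Gauss-sum weights $\tau(\psi)/\varphi(r)\asymp r^{-1/2}$, a large-sieve treatment of the Type~II bilinear sums leaves a term of size $Qr^{1/2}N^{1/2}$, which for $r$ close to $N^{3/4}$ forces $Q\ll N^{1/8}$ rather than the target $Q\le N^{1/4-\varepsilon}$. The special structure (one fixed $r$, an average over $q$) does not by itself recover the missing factor, and your sketch supplies no mechanism for doing so. Thus in each of the two regimes for $r$ one of your two key reductions breaks down.

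The paper's argument avoids both difficulties by never separating off the rational part of $\gamma$. After the Erd\H{o}s--Tur\'an step (Lemma~\ref{Baklem}, with truncation $H=\logl^{A+1}$) and detection of $n\equiv a\bmod q$ by characters, it applies the Heath--Brown decomposition (Lemma~\ref{hdid}) to $\Lambda$ and is left with
\[
\sum_{Q\le q<2Q}\ \sum_{\chi\bmod q}\ \Bigl|\mathop{\sum_{M\le m<2M}\ \sum_{K\le k<2K}}_{N\le mk<N'} a_m b_k\,\chi(mk)\,e(\gamma mk)\Bigr|,
\]
which is bounded \emph{directly} by Lemmas~\ref{doubleaddmultsum} and \ref{Ssumestnew}. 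For Type~II one applies Cauchy in $m$, orthogonality in $\chi$, and sums the resulting geometric series to reach $\sum_{n}\min\bigl(M,\|\gamma n\|^{-1}\bigr)$; for Type~I one completes the inner sum via Lemma~\ref{BaBalem}. In both cases the approximation $b/r$ enters only through the elementary spacing bounds \eqref{spacingbound}--\eqref{spacingbound2}, and no conversion of $e(\gamma n)$ into multiplicative characters modulo $r$ is ever needed.
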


Our Barban-Davenport-Halberstam type result is the following.

\begin{theorem} \label{beabardavhal}
Let $A>0$ and $\gamma$ be an irrational number.  Suppose that for each $\eta > 0$ and sufficiently large $r \in \natn$, we have
\begin{equation} \label{gammatyperes}
\| \gamma r \| > \exp (- r^{\eta} ) .
\end{equation}
Let $N \logl^{-A} \leq R \leq N$.  Then for $N < N' \leq 2N$,
\begin{equation} \label{beabardavhaleq}
\sum_{q \leq R} \sum_{\substack{a=1 \\ (a,q)=1}}^q E(N, N', \gamma, q, a)^2 \ll NR \logl ( \log \logl )^2 .
\end{equation}
\end{theorem}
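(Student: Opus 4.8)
The plan is to follow the classical route to the Barban–Davenport–Halberstam theorem, adapted to carry the Beatty-type twist $\gamma n \in I \bmod 1$, and then to feed in the individual bound coming from the proof of Theorem~\ref{beabomvino}. First I would open up the square: expand $E(N,N',\gamma,q,a)^2$ and move to the large-sieve/character side. After detecting the congruence $n \equiv a \pmod q$ by Dirichlet characters and the Beatty condition $\gamma n \in I \bmod 1$ by a Fourier expansion of $X(I;\cdot)$ on $\rear/\intz$ (truncated at height $H$, say $H = \logl^{B}$ for a suitable $B = B(A)$, with the usual Vaaler or Selberg majorant/minorant to control the truncation error), the inner sum becomes a bilinear expression in
\[
\sum_{N \le n < N'} \Lambda(n) \chi(n) e(\ell \gamma n)
\]
for $1 \le |\ell| \le H$ and $\chi$ a character mod $q$, plus a main term that is absorbed by the $(N'-N)|I|/\varphi(q)$ subtraction. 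Reducing to primitive characters $\hat\chi$ in the standard way, and summing $q \le R$, I reach a sum over moduli $q$, primitive characters, and frequencies $\ell$ of squares of such twisted von Mangoldt sums.

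The next step is to estimate $\sum_{q \le R}\sum_{\chi^\star \bmod q}^\star \big|\sum_{n}\Lambda(n)\chi(n)e(\ell\gamma n)\big|^2$ for each fixed $\ell$. The point is that $e(\ell\gamma n)$ with $\ell$ bounded by a power of $\logl$ is a slowly varying additive character: using the Diophantine hypothesis \eqref{gammatyperes}, $\ell\gamma$ has a rational approximation $b'/r'$ with denominator $r'$ between $\logl$-powers and $\exp(r'^\eta)$, so that on the interval $[N,2N)$ the exponential $e(\ell\gamma n)$ can be split into arithmetic progressions (of modulus $r'$) on each of which $e(\ell\gamma n)$ is essentially constant, or — more cleanly — I would run the Barban–Davenport–Halberstam argument through Gallagher's large sieve estimate together with a Siegel–Walfisz input for $\Lambda(n)\chi(n)e(\ell\gamma n)$. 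In effect the $e(\ell\gamma n)$ factor shifts the spectrum by $\ell\gamma$, and by \eqref{gammatyperes} this shift is well-separated from the rationals of small denominator that matter here, so the classical bound
\[
\sum_{q \le R}\ \sideset{}{^{\star}}\sum_{\chi \bmod q}\ \Big|\sum_{N \le n < N'}\Lambda(n)\chi(n)e(\ell\gamma n)\Big|^2 \ll N R \logl
\]
survives, uniformly for $1 \le |\ell| \le H$, provided $R \le N$; the exceptional (Siegel) zero contributes an admissible amount because $R \ge N\logl^{-A}$ and the Siegel–Walfisz range is respected after choosing $\eta$ small enough in terms of $A$. I would also need the diagonal term $\ell = 0$, which is exactly the usual Barban–Davenport–Halberstam sum and contributes $\ll NR\logl$.

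Finally I would assemble the pieces: summing the per-$\ell$ bound over $1 \le |\ell| \le H$ with the Fourier coefficients of $X(I;\cdot)$ (which decay like $1/|\ell|$, giving a $\log H \ll \log\logl$ factor from $\sum 1/|\ell|$, and then one more $\log\logl$ from applying Cauchy–Schwarz between the two $\ell$-sums that arise from squaring) yields the claimed $NR\logl(\log\logl)^2$; the truncation error from replacing $X(I;\cdot)$ by its truncated Fourier series is, by Vaaler's construction, majorised by a sum of the same shape with an extra $1/H$ saving and is therefore negligible. The main obstacle, and the step deserving the most care, is obtaining the twisted large-sieve bound uniformly in $\ell$ with the correct dependence: one must verify that the Diophantine condition \eqref{gammatyperes} (rather than a fixed finite-type hypothesis) genuinely suffices to keep $\ell\gamma$ away from the low-height Farey fractions for every $\ell \le \logl^{B}$, and to control the interaction between this irrationality and a possible Landau–Siegel zero; this is where the choice of $\eta = \eta(A)$ and the range $R \ge N\logl^{-A}$ are forced. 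Everything else — the character expansion, the reduction to primitive characters, the Vaaler smoothing, and the final Cauchy–Schwarz bookkeeping — is routine.
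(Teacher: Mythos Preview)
Your proposal is correct and follows essentially the same route as the paper. Both arguments Fourier-expand the Beatty indicator (the paper uses the Erd\H{o}s--Tur\'an inequality of Lemma~\ref{Baklem} rather than Vaaler, which is immaterial), apply Cauchy--Schwarz over the frequency sum to generate the $(\log\logl)^2$, pass to characters, and then run the classical Barban--Davenport--Halberstam machinery: large sieve for large conductors (where the twist $e(\ell\gamma n)$ is irrelevant), and an individual Vinogradov-type bound using the Diophantine hypothesis \eqref{gammatyperes} for small conductors.

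Two small remarks. First, the paper organises the small-modulus step differently: it peels off $q\le\logl^{A+1}$ at the very start, before any character expansion, via $E^2\ll (N/\varphi(q))E$ and then bounds $\sum_q\max_a E$ exactly as in the proof of Theorem~\ref{beabomvino}; you instead propose to handle small conductors inside the primitive-character decomposition. Both work. Second, your phrase ``Siegel--Walfisz input'' and the reference to an exceptional zero are slightly off target: what is actually needed for small conductors is not a zero-free region but the exponential-sum estimate $\sum_n\Lambda(n)\chi(n)e(\ell\gamma n)\ll N\logl^{-C}$, obtained (as in the paper) from the Heath-Brown/Vaughan decomposition and the bilinear Lemmas~\ref{doubleaddmultsum} and~\ref{Ssumestnew}, with \eqref{gammatyperes} guaranteeing a denominator $r\gg\logl^{5A}$ in the rational approximation to $\gamma$. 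Once that is said, your outline is the paper's proof.
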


There are weaker results overlapping with Theorems~\ref{beabomvino} and \ref{beabardavhal}    by W. D. Banks and I. E. Shparlinski \cite{BaSh}. \newline

Let $\gamma$ be irrational, $\eta > 0$ and suppose that
\[ \| \gamma r \| \leq \exp \left( - r^{\eta} \right) \]
for infinitely many $r\in \natn$.  Then \eqref{beabardavhaleq} fails (so Theorem~\ref{beabardavhal} is optimal in this sense).  To see this, take $N = \exp (r^{\eta/2})$, $N'=2N$, $R = N \logl^{-8/\eta}$.  We have, for some $u \in \intz$,
\[ \left| \gamma n - \frac{un}{r} \right| \leq 2N r^{-1} \exp ( - r^{\eta} ) < \frac{1}{4r} , \; (n \leq 2N)  . \]
From this, we infer that
\[ \gamma n \not\in \left( \frac{1}{4r}, \frac{3}{4r} \right) \pmod{1} , \; (n \leq 2N) .\]
So
\[ E (N, 2N, \gamma, q,a)^2 \geq \frac{N^2}{4r^2\varphi(q)} , \; ( q \leq R, (a,q)=1) . \]
Therefore,
\[ \sum_{q \leq R} \sum_{\substack{a=1 \\ (a,q)=1}}^q E(N, 2N, \gamma, q, a)^2 \geq \frac{N^2}{4r^2} \sum_{q \leq R} \frac{1}{\varphi(q)} > \frac{N^2}{r^2} = NR \logl^{4/\eta} . \]

We now turn to the definition of $k_0(t,b,\theta)$.  For a smooth function $F$ supported on
\[ \mathcal{R}_k = \left\{ (x_1, \cdots, x_k) \in [0,1]^k : \sum_{i=1}^k x_i \leq 1 \right\} , \]
set
\[ I_k(F) = \int\limits_0^1 \cdots \int\limits_0^1 F(t_1, \cdots, t_k)^2 \dif t_1 \cdots \dif t_k , \]
and
\[ J^{(m)}_k(F) =\int\limits_0^1 \cdots \int\limits_0^1\left( \int\limits_0^1 F(t_1, \cdots, t_k) \dif t_m \right)^2 \dif t_1 \cdots \dif t_{m-1} \dif t_{m+1} \cdots \dif t_k  \]
for $m=1, \cdots, k$.  Let
\[ M_k = \sup_F \frac{\displaystyle{\sum_{m=1}^k J^{(m)}_k (F)}}{I_k(F)} , \]
where the $\sup$ is taken over all functions $F$ specified above and subject to the conditions $I_k(F) \neq 0$ and $J^{(m)}_k(F) \neq 0$ for each $m$.  Sharpening a result of Maynard \cite{May}, D. H. J. Polymath \cite{polym} gives the lower bound
\begin{equation} \label{mklowerbound}
M_k \geq \log k + O(1) .
\end{equation}
Now let $k_0 (t,b, \theta)$ be the least integer $k$ for which
\begin{equation} \label{k0defcond}
M_k > \frac{2t-2}{b\theta} .
\end{equation}

\section{Deduction of Theorem~\ref{gentheo} from Two Propositions}

We first write down some lemmas that we shall need later.

\begin{lemma} \label{GGPYlem}
Let $\kappa$, $A_1$, $A_2$, $L > 0$.  Suppose that $\gamma$ is a multiplicative function satisfying
\[ 0 \leq \frac{\gamma(p)}{p} \leq 1-A_1 \]
for all prime $p$ and
\[ -L \leq \sum_{w \leq p \leq z} \frac{\gamma(p) \log p}{p} - \kappa \log \frac{z}{w} \leq A_2 \]
for any $w$ and $z$ with $2 \leq w \leq z$.  Let $g$ be the totally multiplicative function defined by
\[ g(p) = \frac{\gamma(p)}{p-\gamma(p)} . \]
Suppose that $G: [0,1] \to \rear$ is a piecewise differentiable function with
\[ |G(y)| + |G'(y)| \leq B \]
for $0 \leq y \leq 1$ and
\begin{equation} \label{Sdef}
 S = \prod_p \left( 1- \frac{\gamma(p)}{p} \right)^{-1} \left( 1- \frac{1}{p} \right)^{\kappa} .
\end{equation}
Then for $z >1$, we have
\[ \sum_{d < z} \mu(d)^2 g(d) G \left( \frac{\log d}{\log z} \right) = \frac{S (\log z)^{\kappa}}{\Gamma(\kappa)} \int_0^1 t^{\kappa-1} G(t) \dif t + O \left( SLB (\log z)^{\kappa-1} \right)  . \]
The implied constant above depends on $A_1$, $A_2$, $\kappa$, but is independent of $L$.
\end{lemma}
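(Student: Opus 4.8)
The statement to prove is Lemma~\ref{GGPYlem}, a classical asymptotic for weighted sums of multiplicative functions over squarefree integers, essentially due to Goldston--Graham--Pintz--Y\i ld\i r\i m. Here is how I would approach it.

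\medskip

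\textbf{Plan.} The standard route is through the Perron/contour-integral representation combined with the Selberg--Delange method, but since we only need a main term with a power-saving-in-$\log$ error it is cleaner to argue via a Dirichlet series identity and partial summation. First I would introduce the Dirichlet series $\mathcal{D}(s) = \sum_{d\ge 1}\mu(d)^2 g(d) d^{-s}$, which by multiplicativity factors as $\prod_p\bigl(1 + g(p)p^{-s}\bigr)$. Using $g(p) = \gamma(p)/(p-\gamma(p))$ one checks the local factor is $\bigl(1-p^{-s}\bigr)^{-\kappa}$ times a factor that is holomorphic and bounded in a half-plane $\Re s > -c$; more precisely, write $\mathcal{D}(s) = \zeta(s+1)^{\kappa} \mathcal{E}(s)$ where $\mathcal{E}(s) = \prod_p\bigl(1+g(p)p^{-s}\bigr)\bigl(1-p^{-s-1}\bigr)^{\kappa}$, and the hypothesis that $\sum_{w\le p\le z}\gamma(p)\log p/p - \kappa\log(z/w)$ is bounded between $-L$ and $A_2$ is exactly what is needed to show $\mathcal{E}(s)$ converges and is bounded in a neighbourhood of $s=0$, with $\mathcal{E}(0) = S$ as defined in \eqref{Sdef}. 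The bound $\gamma(p)/p \le 1-A_1$ ensures the Euler product has no issues at small primes and that the logarithmic derivative is controlled.

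\medskip

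\textbf{Key steps.} (i) Establish the factorization $\mathcal{D}(s) = \zeta(s+1)^{\kappa}\mathcal{E}(s)$ and the analytic properties of $\mathcal{E}$: holomorphy and boundedness (by an absolute constant depending on $A_1,A_2,\kappa$) for $\Re s \ge -\delta$ for some fixed $\delta = \delta(A_1,A_2,\kappa) > 0$, together with a bound on $\mathcal{E}'$ in a slightly smaller region; track carefully that the dependence on $L$ enters only linearly, through $\mathcal{E}(0) \asymp S$ and the size of $\mathcal{E}$ near $s=0$. (ii) Reduce the sum in the lemma to an integral: by partial summation it suffices to understand $M(x) = \sum_{d\le x}\mu(d)^2 g(d)$ and its smoothed variants, and then to write $\sum_{d<z}\mu(d)^2 g(d) G(\log d/\log z)$ as a Stieltjes integral $\int_{1^-}^{z} G(\log u/\log z)\,dM(u)$, integrating by parts to move $G'$ onto $M$. (iii) Obtain the asymptotic $M(x) = \frac{S}{\Gamma(\kappa+1)}(\log x)^{\kappa} + O\bigl(SL(\log x)^{\kappa-1}\bigr)$ from a truncated Perron formula: $M(x) = \frac{1}{2\pi i}\int_{c-iT}^{c+iT}\mathcal{D}(s)\frac{x^s}{s}\,ds + (\text{error})$, shift the contour to a truncated Hankel-type contour around $s=0$ (the branch point of $\zeta(s+1)^{\kappa}$), and evaluate the main contribution using the known expansion of $\zeta(s+1)^{\kappa}x^s/s$ near $s=0$; the residual/branch-cut contribution gives the $(\log x)^{\kappa}$ main term with coefficient $\mathcal{E}(0)/\Gamma(\kappa+1) = S/\Gamma(\kappa+1)$, and the horizontal/vertical pieces contribute the stated error. (iv) Finally substitute into the integration-by-parts formula: $\int_0^1 t^{\kappa-1}G(t)\,dt$ appears after the change of variables $u = z^t$ and an integration by parts that matches $\frac{d}{dt}\,t^{\kappa}$ against $G$, and the $|G|+|G'|\le B$ hypothesis converts every error term into $O(SLB(\log z)^{\kappa-1})$.

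\medskip

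\textbf{Main obstacle.} The delicate point is Step (i)---showing that the one-sided and two-sided bounds on $\sum_{w\le p\le z}\gamma(p)\log p/p$ translate into the required analytic control of $\mathcal{E}(s)$ in a \emph{fixed} half-plane to the left of $\Re s = 0$, with the $L$-dependence isolated exactly as claimed (the implied constant depending only on $A_1,A_2,\kappa$, not on $L$). This is really a quantitative Mertens-type estimate: one writes $\log\mathcal{E}(s) = \sum_p\bigl[\log(1+g(p)p^{-s}) - \kappa\log(1-p^{-s-1})^{-1}\bigr]$, expands each bracket, and uses $g(p) = \gamma(p)/p + O(\gamma(p)^2/p^2)$ together with partial summation against the hypothesized bound on $\sum\gamma(p)\log p/p$ to show the series converges absolutely and uniformly for $\Re s \ge -\delta$. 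The lower bound $-L$ is what permits the partial-summation manipulation to go through with only a linear loss in $L$, while the condition $\gamma(p)/p \le 1-A_1$ keeps the small primes harmless. Once (i) is in hand, Steps (ii)--(iv) are routine contour-shifting and bookkeeping; alternatively one can cite the Selberg--Delange machinery (e.g.\ Tenenbaum) for (iii) directly, but I would prefer to give the short self-contained contour argument since we only need one term of the expansion.
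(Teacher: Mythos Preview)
The paper does not prove this lemma at all: it simply quotes it as \cite[Lemma~4]{GGPY}. So any correct argument is acceptable, but yours should be compared against the actual GGPY proof, which is purely elementary (real-variable) and does not use contour integration.

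Your outline has a genuine gap at exactly the point you flagged as the ``main obstacle''. You assert that $\mathcal{E}(s)=\prod_p(1+g(p)p^{-s})(1-p^{-s-1})^{\kappa}$ continues holomorphically and is bounded in a half-plane $\Re s\ge -\delta$ for some fixed $\delta=\delta(A_1,A_2,\kappa)>0$. The hypotheses of the lemma do not give this. They say only that the partial sums $\sum_{p\le x}(\gamma(p)-\kappa)\log p/p$ are bounded (between $-L$ and $A_2$); by Abel summation this yields convergence of the relevant prime series on $\Re s\ge 0$ and lets you define $\mathcal{E}(0)=S$, but it gives no control whatsoever for $\Re s<0$. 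Indeed one can choose $\gamma$ so that the partial sums are bounded yet oscillate in a way that obstructs any continuation past the line $\Re s=0$. Without that continuation your contour shift in Step~(iii) cannot be carried out, and the Selberg--Delange mechanism (which needs $\mathcal{E}$ analytic in a region strictly containing the singularity of $\zeta(s+1)^\kappa$) does not apply as stated.

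The GGPY argument sidesteps this entirely. One works directly with $M(x)=\sum_{d\le x}\mu(d)^2 g(d)$ and uses the convolution identity coming from $\log d=\sum_{p\mid d}\log p$ to write
\[
\sum_{d\le x}\mu(d)^2 g(d)\log d \;=\; \sum_{p\le x} g(p)\log p \sum_{\substack{m\le x/p\\ p\nmid m}}\mu(m)^2 g(m),
\]
and then feeds in the prime hypothesis (which controls $\sum_{p\le y} g(p)\log p$ up to $O(L)$) together with partial summation to obtain an approximate integral equation for $M$. Solving that equation gives $M(x)=\dfrac{S}{\Gamma(\kappa+1)}(\log x)^{\kappa}+O\bigl(SL(\log x)^{\kappa-1}\bigr)$; your Steps~(ii) and~(iv) (partial summation against $G$) then finish the job exactly as you wrote. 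If you want to keep a complex-analytic proof, you would need to replace the contour shift by a Tauberian argument that operates on $\Re s\ge 0$ only (e.g.\ a Wirsing-type theorem), but that is considerably more work than the elementary route and is not what GGPY do.
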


\begin{proof}
This is \cite[Lemma 4]{GGPY}.
\end{proof}

Throughout this section, we assume that the hypotheses of Theorem~\ref{gentheo} hold.  Moreover, we write
\[ W_1 = \prod_{p \leq D_0 \; \mbox{or} \; \;  \; p | q_0q_1} p , \; \; \; W_2 = \prod_{\substack{p \leq D_0 \\ p \nmid q_0 }} p , \; \; \; R = N^{\theta/2-\varepsilon} . \]
Recalling the definition of admissible set, we pick a natural number $\nu_0$ with
\[ (\nu_0 + h_m, W_2) = 1 \; \; \; \; (m=1, \cdots, k) . \]

\begin{lemma} \label{multlem}
Suppose that $\gamma(p) = 1 + O(p^{-1})$ if $p \nmid W_1$ and $\gamma(p) = 0$ if $p | W_1$.  Let $\kappa =1$ and $S$ as defined in \eqref{Sdef}.  We have
\[ S = \frac{\varphi(W_1)}{W_1} \left( 1 + O(D_0^{-1}) \right) . \]
\end{lemma}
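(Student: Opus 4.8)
The plan is to split the Euler product defining $S$ according to whether $p \mid W_1$ or $p \nmid W_1$, since the hypotheses prescribe the behaviour of $\gamma(p)$ separately on these two ranges. First I would treat the primes dividing $W_1$: for such $p$ we have $\gamma(p)=0$, so (recalling $\kappa=1$) the corresponding Euler factor in \eqref{Sdef} is simply $(1-0)^{-1}\left(1-\tfrac1p\right)=1-\tfrac1p$, and the finite product of these factors is exactly $\prod_{p\mid W_1}\left(1-\tfrac1p\right)=\varphi(W_1)/W_1$.

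Next I would estimate the remaining factors, those with $p\nmid W_1$. Writing $\gamma(p)/p = 1/p + O(p^{-2})$, which is precisely the content of $\gamma(p)=1+O(p^{-1})$, we get $1-\gamma(p)/p = \left(1-\tfrac1p\right)\left(1+O(p^{-2})\right)$, using that $\left(1-\tfrac1p\right)^{-1}\le 2$ for $p\ge 2$. Hence each such factor satisfies $\left(1-\gamma(p)/p\right)^{-1}\left(1-\tfrac1p\right) = \left(1+O(p^{-2})\right)^{-1} = 1+O(p^{-2})$. This also confirms that the product over $p\nmid W_1$ is absolutely convergent, so that $S$ is well defined.

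Finally, because every prime not dividing $W_1$ exceeds $D_0$ (by the definition $W_1=\prod_{p\le D_0\text{ or }p\mid q_0q_1}p$), taking logarithms gives $\bigl|\log\prod_{p\nmid W_1}(1+O(p^{-2}))\bigr| \le \sum_{p>D_0}O(p^{-2}) \ll \sum_{n>D_0}n^{-2} \ll D_0^{-1}$, whence this product equals $1+O(D_0^{-1})$. Multiplying the two pieces together yields $S=(\varphi(W_1)/W_1)(1+O(D_0^{-1}))$, as claimed. I do not expect a serious obstacle here; the only points requiring a little care are keeping the implied constants uniform when passing from $\gamma(p)=1+O(p^{-1})$ to the bound $1+O(p^{-2})$ for the Euler factor, and verifying that the tail sum $\sum_{p>D_0}p^{-2}$ genuinely contributes $O(D_0^{-1})$ and not something larger.
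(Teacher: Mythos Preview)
Your argument is correct and follows essentially the same route as the paper: split the Euler product over $p\mid W_1$ and $p\nmid W_1$, obtain $\varphi(W_1)/W_1$ from the first piece, and bound the second piece by $1+O(D_0^{-1})$ using that every prime not dividing $W_1$ exceeds $D_0$. The paper's proof is terser but identical in substance.
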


\begin{proof}
We have
\[ S = \prod_{p | W_1} \left( 1 - \frac{1}{p} \right) \prod_{p \nmid W_1} \left( 1 - \frac{1}{p} + O \left( \frac{1}{p^2} \right) \right)^{-1} \left( 1- \frac{1}{p} \right) = \frac{\varphi(W_1)}{W_1} \prod_{\substack{p > D_0 \\  p \nmid q_0q_1}} \left( 1+ O(p^{-2}) \right) , \]
from which the statement of the lemma can be readily obtained.
\end{proof}

\begin{lemma} \label{T1T2lem}
Let $H>1$,
\[ T_1 = \sum_{\substack{d \leq R \\ (d,W_1)=1}} \frac{\mu^2(d)}{d} \sum_{a | d} \frac{4^{\omega(a)}}{a}  \; \; \mbox{and} \; \; T_2 = \sum_{H < d \leq R} \frac{\mu^2(d)}{d^2} \sum_{a|d} a^{-1/2} .\]
Then, we have
\begin{equation} \label{T1bound}
T_1 \ll \frac{\varphi(W_1)}{W_1} \logl
\end{equation}
and
\begin{equation} \label{T2bound}
T_2 \ll H^{-1}  . 
\end{equation}
\end{lemma}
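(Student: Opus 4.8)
The plan is to estimate $T_1$ and $T_2$ by elementary manipulation of the divisor sums, reducing everything to sums of multiplicative functions over squarefree $d$ coprime to $W_1$, and then invoking Lemma~\ref{GGPYlem} (or just a direct Mertens-type estimate) to control the resulting main term.

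\begin{proof}
For $T_1$, interchange the order of summation over $d$ and $a$. Writing $d = ab$ with $(a,b)=1$ (forced by $\mu^2(d)=1$), we get
\[ T_1 = \sum_{\substack{a \leq R \\ (a,W_1)=1}} \frac{\mu^2(a) 4^{\omega(a)}}{a^2} \sum_{\substack{b \leq R/a \\ (b, aW_1)=1}} \frac{\mu^2(b)}{b} . \]
The inner sum over $b$ is $\ll \prod_{p \nmid aW_1, \, p \leq R} (1 + 1/p) \ll \dfrac{\varphi(W_1)}{W_1} \logl$, uniformly in $a$, by Mertens' theorem (the factors $p \mid a$ only decrease the product). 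Hence
\[ T_1 \ll \frac{\varphi(W_1)}{W_1} \logl \sum_{a=1}^\infty \frac{\mu^2(a) 4^{\omega(a)}}{a^2} = \frac{\varphi(W_1)}{W_1} \logl \prod_p \left( 1 + \frac{4}{p^2} \right) \ll \frac{\varphi(W_1)}{W_1} \logl , \]
since the Euler product converges. This gives \eqref{T1bound}.

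For $T_2$, perform the same interchange: with $d = ab$, $(a,b)=1$,
\[ T_2 = \sum_{H < d \leq R} \frac{\mu^2(d)}{d^2} \sum_{a \mid d} a^{-1/2} \leq \sum_{a \geq 1} \frac{\mu^2(a)}{a^{1/2} a^2} \sum_{\substack{b > H/a \\ (b,a)=1}} \frac{\mu^2(b)}{b^2} . \]
Here I used $d^2 = a^2 b^2$ and $d > H$ forces $b > H/a$. The inner sum over $b$ is $\ll \min(1, a/H) \leq a/H$ when $a \leq H$, and the whole expression is handled by splitting at $a = H$; in all cases $\sum_{b > H/a} \mu^2(b)/b^2 \ll a/H$ for $a \le H$ and the tail $a > H$ contributes $\ll \sum_{a>H} a^{-5/2} \ll H^{-3/2}$. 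Thus
\[ T_2 \ll \frac{1}{H} \sum_{a \leq H} \frac{\mu^2(a)}{a^{3/2}} + H^{-3/2} \ll H^{-1} \sum_{a=1}^\infty \frac{\mu^2(a)}{a^{3/2}} + H^{-3/2} \ll H^{-1} , \]
since $\sum_a a^{-3/2}$ converges. This proves \eqref{T2bound}.
\end{proof}

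The only point requiring care is keeping the dependence on $W_1$ correct in the bound for $T_1$: one must check that restricting the Mertens product to primes $p \nmid W_1$ (rather than all $p \leq R$) is precisely what produces the factor $\varphi(W_1)/W_1$, and that the extra coprimality condition $p \nmid a$ can only shrink the product. Everything else is a routine convergence check on Euler products with $p^{-2}$ and $p^{-3/2}$ tails; there is no genuine obstacle. An alternative, if one prefers to cite Lemma~\ref{GGPYlem} directly, is to choose $\gamma$ with $\gamma(p) = 1$ for $p \nmid W_1$ and $\gamma(p)=0$ for $p \mid W_1$, so that $g(p) = 1/(p-1)$, and read off the $T_1$ main term from the $\kappa = 1$ case; but the bare-hands Mertens argument above is shorter and entirely sufficient.
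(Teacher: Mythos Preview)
Your argument is correct. For $T_2$ your interchange is essentially identical to the paper's: the authors write $d=ak$, obtain $T_2\le\sum_{a\le R}a^{-5/2}\sum_{H/a<k\le R/a}k^{-2}\ll H^{-1}\sum_a a^{-3/2}\ll H^{-1}$, without bothering to split at $a=H$ (since $\sum_{k>H/a}k^{-2}\ll a/H$ holds for all $a$, not just $a\le H$).

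For $T_1$ the approaches genuinely differ. The paper observes that for squarefree $d$ with $(d,W_1)=1$ one has
\[
\frac{1}{d}\sum_{a\mid d}\frac{4^{\omega(a)}}{a}=\prod_{p\mid d}\Bigl(\frac{1}{p}+\frac{4}{p^2}\Bigr)=g(d),
\]
where $g$ comes from the choice $\gamma(p)=\dfrac{p^2+4p}{p^2+p+4}$ for $p\nmid W_1$ (and $\gamma(p)=0$ otherwise). It then applies Lemma~\ref{GGPYlem} with $\kappa=1$, $G\equiv1$ and Lemma~\ref{multlem} to obtain an asymptotic $T_1=\dfrac{\varphi(W_1)}{W_1}(1+O(D_0^{-1}))\log R+O\bigl(\tfrac{\varphi(W_1)}{W_1}L\bigr)$, from which the upper bound follows. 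Your route avoids Lemma~\ref{GGPYlem} entirely, bounding the inner $b$-sum by a Mertens product and then summing a convergent Euler product in $a$; this is shorter and perfectly adequate since only the upper bound is ever used. The one point you flag---extracting the factor $\varphi(W_1)/W_1$ from the Mertens product restricted to $p\nmid W_1$, $p\le R$---does need the observation that at most $O_\theta(1)$ prime divisors of $W_1$ can exceed $R$ (they must divide $q_0q_1\le N^2$ while $R=N^{\theta/2-\varepsilon}$), so their contribution to $\prod_{p\mid W_1}(1-1/p)$ or $(1+1/p)$ is bounded. With that remark your elementary argument is complete.
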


\begin{proof}
Let $\gamma(p)=0$ if $p | W_1$ and
\[ \gamma(p) = \frac{p^2+4p}{p^2+p+4} \]
if $p \nmid W_1$.  Then $g(p)$, as defined in the statement of Lemma~\ref{GGPYlem}, is
\[ g(p) = \frac{1}{p} + \frac{4}{p^2} \]
if $p \nmid W_1$.  Therefore, if $d$ is square-free and $(d,W_1)=1$,
\[ \frac{1}{d} \sum_{a|d} \frac{4^{\omega(a)}}{a} = \frac{1}{d} \prod_{p|d} \left( 1 + \frac{4}{p} \right) =g(d) . \]
Otherwise, if $(d,W_1) \neq 1$, then $g(d)=0$.  Using Lemma~\ref{GGPYlem} with $G(y)=1$ and Lemma~\ref{multlem}, we have
\[ T_1 = \sum_{d \leq R} \mu^2(d) g(d) G \left( \frac{ \log d}{\log R} \right) = \frac{\varphi(W_1)}{W_1} \left( 1 + O (D_0^{-1} ) \right) \log R + O \left( \frac{\varphi(W_1)}{W_1} L \right) , \]
where we can take
\[ L = \sum_{p | W_1} \frac{\log p}{p} \ll \log D_0 + \log \omega(q_0) \ll \log \logl . \]
Combining everything, we get \eqref{T1bound}.  \newline

To prove \eqref{T2bound}, we interchange the summations and get
\[ T_2 \leq \sum_{a \leq R} a^{-5/2} \sum_{Ha^{-1} < k \leq Ra^{-1}} k^{-2} \ll \sum_{a \leq R} a^{-3/2} H^{-1} \ll H^{-1} , \]
completing the proof of the lemma.
\end{proof}

\begin{lemma} \label{multfunction}
Let $f_0$, $f_1$ be multiplicative functions with $f_0(p)=f_1(p)+1$.  Then for squarefree $d$, $e$,
\[ \frac{1}{f_0 ([d,e])} = \frac{1}{f_0(d) f_0(e)} \sum_{k | d,e} f_1 (k) . \]
\end{lemma}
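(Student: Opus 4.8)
The plan is to verify the identity by factoring both sides into products over primes, exploiting that $d$, $e$, $[d,e]$ and $(d,e)$ are all squarefree. Write $d = \prod_{p \in S} p$ and $e = \prod_{p \in T} p$ for finite sets of primes $S$, $T$, so that $[d,e] = \prod_{p \in S \cup T} p$ and a natural number $k$ divides both $d$ and $e$ precisely when $k = \prod_{p \in U} p$ for some subset $U \subseteq S \cap T$. Here I am reading the condition $k \mid d, e$ in the statement as $k \mid (d,e)$.

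First I would rewrite the sum on the right. Since $f_1$ is multiplicative, $f_1(1) = 1$, and $(d,e)$ is squarefree,
\[ \sum_{\substack{k \mid d \\ k \mid e}} f_1(k) = \prod_{p \mid (d,e)} \bigl( 1 + f_1(p) \bigr) = \prod_{p \in S \cap T} f_0(p) , \]
the last equality by the hypothesis $f_0(p) = f_1(p) + 1$. Next, using that $f_0$ is multiplicative with $f_0(1) = 1$ (and the implicit assumption $f_0(p) \neq 0$ for $p \mid de$, so that the reciprocals make sense), the right-hand side of the asserted identity becomes
\[ \frac{1}{f_0(d) f_0(e)} \prod_{p \in S \cap T} f_0(p) = \frac{\prod_{p \in S \cap T} f_0(p)}{\prod_{p \in S} f_0(p) \cdot \prod_{p \in T} f_0(p)} = \prod_{p \in S \cup T} \frac{1}{f_0(p)} = \frac{1}{f_0([d,e])} , \]
where the middle equality follows by writing $S \cup T$ as the disjoint union of $S \setminus T$, $T \setminus S$ and $S \cap T$ and cancelling the common factors $\prod_{p \in S \cap T} f_0(p)$. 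This is exactly the left-hand side, which proves the lemma.

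The argument is essentially formal. The only points requiring attention are the implicit nonvanishing $f_0(p) \neq 0$ on the relevant primes and the use of squarefreeness of $d$ and $e$, which is what allows $f_0$ and $f_1$ to be evaluated on divisors as clean products of single prime values; there is no genuine obstacle.
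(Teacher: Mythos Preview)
Your proof is correct and follows essentially the same route as the paper: you factor the divisor sum over $k\mid(d,e)$ into $\prod_{p\mid(d,e)}(1+f_1(p))=\prod_{p\mid(d,e)}f_0(p)$ and then cancel against $f_0(d)f_0(e)$ to obtain $\prod_{p\mid[d,e]}f_0(p)^{-1}$. The only difference is cosmetic---you phrase things via the sets $S$, $T$ of prime divisors, whereas the paper writes the same chain of equalities directly.
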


\begin{proof}
We have
\[  \frac{1}{f_0(d) f_0(e)} \sum_{k | d,e} f_1 (k) =  \frac{1}{f_0(d) f_0(e)} \prod_{p | (d,e)} \left( 1+ f_1 (p) \right) =  \frac{1}{f_0(d) f_0(e)} \prod_{p | (d,e)} f_0 (p) = \prod_{p | [d,e]} (f_0 (p))^{-1} . \] 
The lemma follows from this.
\end{proof}

We now prove two propositions that readily yield Theorem~\ref{gentheo} when combined.  To state them, we define weights $y_{\mathbf{r}}$ and $\lambda_{\mathbf{r}}$ for tuples
\[ \mathbf{r} = ( r_1, \cdots, r_k ) \in \natn^k \]
having the properties
\begin{equation} \label{tupprop}
\left( \prod_{i=1}^k r_i, W_1 \right) =1, \; \mu^2 \left( \prod_{i=1}^k r_i \right) = 1.
\end{equation}
We set $y_{\mathbf{r}} = \lambda_{\mathbf{r}} = 0$ for all other tuples.  Let $F$ be a smooth function with $|F| \leq 1$ and the properties given at the end of Section 1.  Let
\begin{equation} \label{yrdef}
y_{\mathbf{r}} = F \left( \frac{\log r_1}{\log R} , \cdots , \frac{\log r_k}{\log R} \right) ,
\end{equation}
and
\begin{equation} \label{lamddef}
\lambda_{\mathbf{d}} = \prod_{i=1}^k \mu(d_i) d_i \sum_{\substack{\mathbf{r} \\ d_i | r_i \; \forall i}} \frac{y_{\mathbf{r}}}{\prod_{i=1}^k \varphi(r_i)} .
\end{equation}

We have
\begin{equation} \label{lambbound}
 \lambda_{\mathbf{r}} \ll \logl^k 
\end{equation}
(see (5.9) of \cite{May}).  For $n \equiv \nu_0 \pmod{W_2}$, let
\[ w_n = \left( \sum_{d_i | n+h_i \; \forall i} \lambda_{\mathbf{d}} \right)^2 , \]
and $w_n=0$ for all other natural numbers $n$. \newline

\begin{proposition} \label{S1prop}
Let
\[ S_1 = \sum_{N \leq n < 2N} w_n X ( \mathcal{A} ; n) . \]
Then
\[ S_1 = \frac{(1+o(1)) \varphi(W_1)^k Y (\log R)^k I_k (F)}{q_0 W_1^k W_2} . \]
\end{proposition}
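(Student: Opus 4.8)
The plan is to follow the standard Maynard-sieve computation for $S_1$, adapted to the set $\mathcal{A}$ via the regularity condition \eqref{regcond1}. First I would expand the square in $w_n$ and interchange the order of summation, writing
\[
S_1 = \sum_{\mathbf{d}, \mathbf{e}} \lambda_{\mathbf{d}} \lambda_{\mathbf{e}} \sum_{\substack{N \le n < 2N \\ [d_i, e_i] \mid n + h_i \ \forall i \\ n \equiv \nu_0 \bmod{W_2}}} X(\mathcal{A}; n).
\]
Since $n \in \mathcal{A}$ forces $n \equiv a_0 \pmod{q_0}$, and the $h_i$ are divisible by $q_0$, the divisibility conditions $[d_i,e_i] \mid n+h_i$ together with $n \equiv \nu_0 \pmod{W_2}$ and $n \equiv a_0 \pmod{q_0}$ combine by CRT into a single congruence $n \equiv a_q \pmod{q q_0}$ where $q = W_2 \prod_i [d_i,e_i]$, provided the moduli are pairwise coprime; the supports of $\lambda_{\mathbf{d}}$ (via \eqref{tupprop}, so $\prod r_i$ is squarefree and coprime to $W_1$) and the admissibility/divisibility condition \eqref{divcond} guarantee exactly this coprimality, and the congruence is soluble (the relevant $a_q$ satisfies $(a_q, q)=1$ because $\nu_0 + h_m$ is coprime to $W_2$ and $d_i,e_i$ are coprime to the $h_i - h_j$ up to the primes dividing $q_0$). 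So the inner sum is $\sum_{n \equiv a_q \bmod{q q_0}} X(\mathcal{A};n)$, which by \eqref{regcond1} equals $Y/(q q_0) + \text{error}$.

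Next I would split $S_1 = \text{(main term)} + \text{(error term)}$. For the error term, the total contribution is bounded by $\logl^{2k}$ (using \eqref{lambbound}) times a sum over $q$ of the form appearing in \eqref{regcond1}; more precisely one bounds the number of pairs $(\mathbf{d},\mathbf{e})$ giving a fixed $q = W_2 \prod_i [d_i,e_i]$ by $\tau_{3k}(q)$ up to the $W_2$ factor (each prime dividing $\prod [d_i,e_i]$ can be assigned to one of $k$ coordinates and be in $d_i$, $e_i$, or both), so $\mu^2$-support plus \eqref{regcond1} gives an error $\ll Y \logl^{2k} / (q_0 \logl^{k+\varepsilon}) = o(Y \logl^k / q_0)$ once we note $R^{2k} = N^{k\theta - 2k\varepsilon} \le N^{\theta}$ is false in general — here I'd need to be a little careful: the moduli $q$ range up to $W_2 R^{2k}$, which is a fixed power of $N$, but \eqref{regcond1} is stated for $q \le N^\theta$. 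This mismatch is the first real issue; the resolution (as in Maynard) is that one should really be invoking the regularity hypothesis with the divisor-weighted $\ell^1$ bound and the fact that $\lambda_{\mathbf d}$ is supported on $d_i \le R$ with $R = N^{\theta/2-\varepsilon}$, so $\prod [d_i,e_i] \le R^{2} \cdot (\text{stuff})$ — actually the support of the $\lambda$'s coming from \eqref{lamddef} and \eqref{yrdef} is on $\prod_i r_i \le R$, hence $\prod_i d_i \le R$ and $\prod_i [d_i,e_i] \le R^2 \le N^{\theta}$, so $q \le W_2 N^\theta$, and absorbing $W_2 = N^{o(1)}$ we can apply \eqref{regcond1} after a dyadic-type adjustment of $\theta$ (this is why the hypotheses carry the $\varepsilon$ room). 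I would spell this out carefully.

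For the main term, we are left with
\[
Y \sum_{\mathbf{d},\mathbf{e}} \frac{\lambda_{\mathbf{d}} \lambda_{\mathbf{e}}}{q_0 W_2 \prod_i [d_i,e_i]},
\]
and this is the classical Maynard diagonalization: using Lemma~\ref{multfunction} with $f_0(p)=p$, $f_1(p)=p-1$ to write $1/\prod_i[d_i,e_i]$ as a sum over $k_i \mid (d_i,e_i)$ of $\prod \varphi(k_i)/\prod(d_i e_i)$, substituting the definition \eqref{lamddef} of $\lambda_{\mathbf d}$, and carrying out the $\mathbf{d},\mathbf{e}$ sums via Möbius, the whole expression collapses (exactly as in \S5 of \cite{May}) to
\[
\frac{Y}{q_0 W_2} \sum_{\mathbf{r}} \frac{y_{\mathbf{r}}^2}{\prod_i \varphi(r_i)^2} \cdot \prod_i \varphi(r_i) \cdot (\text{arithmetic factor}),
\]
and then applying Lemma~\ref{GGPYlem} (with $\kappa=1$, $\gamma$ as in Lemma~\ref{multlem}, $G$ built from $F$) $k$ times — once per coordinate — converts the sum over $\mathbf{r}$ into $(\varphi(W_1)/W_1)^k (\log R)^k I_k(F)$ up to the $(1+o(1))$ factor, using Lemma~\ref{multlem} for the singular series and Lemma~\ref{T1T2lem}-type bounds to control the error terms from the $k$-fold application. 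I expect the main obstacle to be bookkeeping: tracking the exact constants through the $W_1$ versus $W_2$ versus $q_0$ split (the paper's normalization puts $\varphi(W_1)^k / W_1^k$ in the numerator and $W_2$ in the denominator, which is not symmetric and requires care about which primes' local factors have been absorbed where), and verifying the diagonalization identity holds with $\mathcal{A}$-weights rather than an interval — but the latter is exactly what \eqref{regcond1} was designed to handle, so once the error term is dispatched, the main term computation is purely formal and identical to Maynard's.
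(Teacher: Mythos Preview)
Your proposal is correct and follows essentially the same route as the paper: expand $w_n$, reduce the inner sum to a single congruence $n\equiv a_q\pmod{qq_0}$ with $q=W_2\prod_i[d_i,e_i]$ (using \eqref{divcond} to kill the terms with $(d_i,e_j)>1$ for $i\ne j$), invoke \eqref{regcond1} for the error, then diagonalize via Lemma~\ref{multfunction} and the Maynard change of variables, and finish with $k$ applications of Lemma~\ref{GGPYlem} together with Lemma~\ref{multlem}. Your self-correction on the range of $q$ is exactly right---the simplex support of $F$ forces $\prod_i d_i\le R$, so $q\le W_2 R^2\le N^\theta$, which is precisely the observation the paper uses; the only cosmetic difference is that the paper makes the intermediate $s_{i,j}$-M\"obius step and the $D_0$-truncation (handled by Lemma~\ref{T1T2lem}) explicit rather than citing \cite{May} wholesale.
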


\begin{proposition} \label{S2prop}
Let 
\[ S_2 (g, m) = \sum_{\substack{N \leq n < 2N \\ n \in \mathcal{A} \cap (\mathcal{A} - h_m)}} w_n \varrho_g(n+h_m) . \]
Then for $ 1 \leq g \leq s$ and $1 \leq m \leq k$,
\[ S_2 (g,m) = \frac{b_{g,m} (1+o(1)) \varphi(W_1)^{k+1} Y (\log R)^{k+1} J^{(m)}_k (F)}{\varphi(q_0) \varphi(W_2) W_1^{k+1} \logl} . \]
\end{proposition}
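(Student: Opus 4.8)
\emph{Proof plan for Proposition~\ref{S2prop}.}
The argument runs parallel to Maynard's evaluation of his prime-detecting sums (and to the easier computation behind Proposition~\ref{S1prop}); the features special to the present setting are the fixed residue $a_0\pmod{q_0}$ carried by every element of $\mathcal A$, the auxiliary $W$-trick modulus $W_2$, and the use of $\varrho_g$ together with the distribution hypothesis \eqref{regcond3} in place of the indicator of the primes. Fix $g$ and $m$. First I would open the square defining $w_n$ and interchange the order of summation, giving
\[ S_2(g,m)=\sum_{\mathbf d,\mathbf e}\lambda_{\mathbf d}\lambda_{\mathbf e}\sum_{\substack{N\le n<2N,\ n\equiv\nu_0\ (W_2)\\ [d_i,e_i]\mid n+h_i\ (1\le i\le k)\\ n\in\mathcal A\cap(\mathcal A-h_m)}}\varrho_g(n+h_m). \]
Since $\varrho_g(n+h_m)\ne0$ forces $(n+h_m,P(N^{\theta/2}))=1$, while every prime dividing $[d_m,e_m]$ is at most $R<N^{\theta/2}$ and $[d_m,e_m]\mid n+h_m$, only the tuples with $d_m=e_m=1$ survive; this is exactly the mechanism that replaces $I_k(F)$ by $J^{(m)}_k(F)$. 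Put $n'=n+h_m$ and $q=\prod_{i\ne m}[d_i,e_i]$. By \eqref{divcond} a prime $p>D_0$ with $p\nmid q_0$ cannot divide $h_i-h_j$ for $i\ne j$, so (using also $(r_i,W_1)=1$ and the definitions of $W_1,W_2$) the moduli $q_0$, $W_2$ and the $[d_i,e_i]$ with $i\ne m$ are pairwise coprime, and by the Chinese Remainder Theorem the inner sum becomes $\sum_{n'\equiv a^{\ast}\ (q_0qW_2)}\varrho_g(n')\,X\big((\mathcal A+h_m)\cap\mathcal A;n'\big)$ for some $a^{\ast}\equiv a_0\ (q_0)$ with $(a^{\ast},qW_2)=1$. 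As $F$ is supported on $\mathcal R_k$ we have $\prod_i d_i\le R$ and $\prod_i e_i\le R$, whence $q\le R^2=N^{\theta-2\varepsilon}$, and since $W_2\le P(D_0)=N^{o(1)}$ the modulus satisfies $qW_2\le N^{\theta}$, is squarefree, and is coprime to $q_0q_1$.

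Next, write the inner sum as $Y_{g,m}/\varphi(q_0qW_2)$ plus an error. Bounding $|\lambda_{\mathbf d}\lambda_{\mathbf e}|\ll\logl^{2k}$ by \eqref{lambbound}, grouping the surviving tuples by the value of $q$ (there are at most $\tau_{3k}(q)\le\tau_{3k}(qW_2)$ of them, as each prime of $q$ is distributed among the $k-1$ indices $i\ne m$ and then placed in $d_i$, in $e_i$, or in both), and applying \eqref{regcond3} with modulus $qW_2$, choosing for each $q$ the residue realising the maximum, the error contribution to $S_2(g,m)$ is
\[ \ll\logl^{2k}\sum_{\substack{qW_2\le N^{\theta}\\(qW_2,q_0q_1)=1}}\mu^2(qW_2)\tau_{3k}(qW_2)\max_{\substack{a\equiv a_0\ (q_0)\\(a,qW_2)=1}}\left|\sum_{n\equiv a\ (q_0qW_2)}\varrho_g(n)X\big((\mathcal A+h_m)\cap\mathcal A;n\big)-\frac{Y_{g,m}}{\varphi(q_0qW_2)}\right|\ll\frac{Y\logl^{k-\varepsilon}}{\varphi(q_0)}, \]
which is $o$ of the claimed main term, since $\varphi(W_2)\le P(D_0)=\logl^{o(1)}$ and $(\varphi(W_1)/W_1)^{k+1}=\logl^{-o(1)}$ force the main term to be $\gg Y\varphi(q_0)^{-1}\logl^{k-o(1)}$.

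It remains to evaluate the main term
\[ \frac{Y(b_{g,m}+o(1))}{\varphi(q_0)\varphi(W_2)\logl}\sum_{\substack{\mathbf d,\mathbf e\\ d_m=e_m=1}}\frac{\lambda_{\mathbf d}\lambda_{\mathbf e}}{\prod_{i\ne m}\varphi([d_i,e_i])}. \]
The remaining sum is precisely the quantity Maynard evaluates for his $S_2^{(m)}$, and I would treat it exactly as in \cite{May}: substitute the definition \eqref{lamddef} of $\lambda_{\mathbf d}$, apply Lemma~\ref{multfunction} with $f_0=\varphi$ (hence $f_1(p)=p-2$) to each factor $1/\varphi([d_i,e_i])$, interchange summations, and use the vanishing of the resulting Möbius sums to diagonalise the expression into $\sum_{\mathbf r}\big(y^{(m)}_{\mathbf r}\big)^2\big/\prod_{i\ne m}\varphi(r_i)$ for the auxiliary weights $y^{(m)}_{\mathbf r}$ of \cite{May}; then Lemma~\ref{GGPYlem} with $\kappa=1$ and $\gamma$ as in Lemma~\ref{multlem} (so that $S=\varphi(W_1)/W_1+O(D_0^{-1})$), applied as in \cite{May}, converts these sums into $(\varphi(W_1)/W_1)^{k+1}(\log R)^{k+1}J^{(m)}_k(F)(1+o(1))$, and substituting this yields the stated formula. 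I expect this last step — the diagonalisation together with the bookkeeping of the powers of $\log R$ and of $\varphi(W_1)/W_1$, in particular the way the constraint $(\prod_i r_i,W_1)=1$ and the simplex support of $F$ produce $J^{(m)}_k(F)$ with the correct constant — to be the main obstacle; the expansion and the reduction to \eqref{regcond3} are routine once \eqref{divcond} and the support condition on $\varrho_g$ are exploited.
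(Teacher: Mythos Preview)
Your plan is essentially the paper's own proof: open the square, use the support of $\varrho_g$ to force $d_m=e_m=1$, reduce to $\sum'$ via \eqref{divcond}, invoke \eqref{regcond3} for the error term, then diagonalise with Lemma~\ref{multfunction} (indeed $f_0=\varphi$, $f_1(p)=p-2$) and finish via Lemma~\ref{rm1ylem} and repeated use of Lemma~\ref{GGPYlem}. One slip to correct when you execute: with $f_1(p)=p-2=:g(p)$ the diagonalised sum is $\sum_{\mathbf u}(y^{(m)}_{\mathbf u})^2/\prod_i g(u_i)$ as in \eqref{firstthingprop2}, not $\prod_{i\ne m}\varphi(r_i)$, and the final application of Lemma~\ref{GGPYlem} accordingly uses $\gamma(p)=(p^3-2p^2+p)/(p^3-p^2-2p+1)$ to reproduce $\varphi(p)^2/(g(p)p^2)$ rather than the $\gamma$ of Lemma~\ref{multlem}.
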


Before proving the above propositions, we shall deduce Theorem~\ref{gentheo} from them.
\begin{proof}[Proof of Theorem~\ref{gentheo}]
Let
\[ Z = \frac{Y\varphi(W_1)^k}{q_0W_1^kW_2} (\log R)^k \]
and
\[ S(N) = \sum_{\substack{N \leq n < 2N \\ n \in \mathcal{A}}} w_n \left( \sum_{m=1}^k X \left( \prip \cap \mathcal{A}; n + h_m \right) - (t-1) \right) .  \]
Since $w_n \geq 0$, \eqref{regcond2} gives that
\[ S(N) \geq \sum_{m=1}^k \left( \sum_{g=1}^a S_2(g,m) - \sum_{g=a+1}^s S_2 (g,m) \right) - (t-1)S_1 . \]
Using Propositions \ref{S1prop} and \ref{S2prop}, the right-hand side of the above is
\[ \left(1+o(1) \right) Z \left( \sum_{m=1}^k \left( \sum_{g=1}^a b_{g,m} - \sum_{g=a+1}^s b_{g,m} \right) J^{(m)}_k(F) \left( \frac{\theta}{2} - \varepsilon \right) - (t-1) I_k(F) \right) . \]
Here we have used
\[ \frac{\varphi(q_0) \varphi(q_1) \varphi(W_2)}{q_0q_1 W_2} \frac{W_1}{\varphi(W_1)} = 1 \; \; \; \mbox{and} \; \; \; \frac{\varphi(q_1)}{q_1} = 1 + o(1) . \]

Therefore, using \eqref{bbound}, we get
\[ S(N) \geq \left(1+o(1) \right) Z \left( b\sum_{m=1}^k J^{(m)}_k(F) \left( \frac{\theta}{2} - \varepsilon \right) - (t-1) I_k(F) \right)  > 0 , \]
for a suitable choice of $F$.  The positivity of the above expression is a consequence of \eqref{k0defcond}.  Therefore, there must be at least one $n \in \mathcal{A}$ for which
\[ \sum_{m=1}^k X \left( \prip \cap \mathcal{A} ; n+ h_m \right) > t-1 . \]
For this $n$, there is a set of $t$ primes $n+h_{m_1}, \cdots , \; n+ h_{m_t}$ in $\mathcal{A}$.
\end{proof}

\section{Proof of Propositions \ref{S1prop} and \ref{S2prop}}

This section is devoted to the proofs of the two propositions.

\begin{proof}[Proof of Proposition~\ref{S1prop}]
We first show that
\begin{equation} \label{S11st}
S_1 = \frac{Y}{q_0W_2} \sum_{\mathbf{r}} \frac{y_{\mathbf{r}}^2}{\prod_{i=1}^k \varphi(r_i)} + O \left( \frac{Y \varphi(W_1)^k \logl^k}{q_0W_2W_1^kD_0} \right).
\end{equation}
From the definition of $w_n$, we get
\begin{equation} \label{S1wn}
S_1 = \sum_{\mathbf{d}, \; \mathbf{e}} \lambda_{\mathbf{d}} \lambda_{\mathbf{e}} \sum_{\substack{N \leq n < 2N \\ n \equiv \nu_0 \bmod{W_2} \\ [d_i, e_i ] | n+h_i \; \forall i}} X \left( \mathcal{A}; n \right). 
\end{equation}

Recall that $n \equiv a_0 \pmod{q_0}$ for all $n \in \mathcal{A}$.  The inner sum of the above takes the form
\[ \sum_{\substack{N \leq n < 2N \\ n \equiv a_q \bmod{qq_0}}} X \left( \mathcal{A}; n \right), \]
where
\[ q = W_2 \prod_{i=1}^k [d_i, e_i] , \]
provided that $W_2$, $[d_1, e_1]$, $\cdots$, $[d_k, e_k]$ are pairwise coprime.  The latter restriction reduces to
\begin{equation} \label{coprimcond}
(d_i, e_j) = 1
\end{equation}
for all $i \neq j$, and we exhibit this condition on the summation by writing
\[ \sideset{}{'} \sum_{\mathbf{d}, \; \mathbf{e}} . \]

Outside of $ \sum_{\mathbf{d}, \; \mathbf{e}}^{'}$, the inner sum is empty.  To see this, suppose that $p | d_i$, $p|e_j$ with $i \neq j$, then the conditions
\[ [d_i, e_i] | n+h_i , \; \mbox{and} \; [d_j, e_j] | n+h_j \]
imply that $p | h_i-h_j$.  This means that either $p \leq D_0$ or $p| q_0$, both contrary to $p|d_i$. \newline

Counting the number of times a given $q$ can arise, we get
\begin{equation} \label{countq}
S_1 - \frac{Y}{q_0W_2} \sideset{}{'} \sum_{\mathbf{d}, \; \mathbf{e}} \frac{\lambda_{\mathbf{d}} \lambda_{\mathbf{e}}}{ \prod_{i=1}^k [d_i, e_i]} \ll \left( \max_{\mathbf{d}} \left| \lambda_{\mathbf{d}} \right| \right)^2 \sum_{\substack{q \leq R^2 W_2 \\ (q,q_0)=1}} \mu^2(q) \tau_{3k} (q) \left| \sum_{n \equiv a_q \bmod{qq_0}} X \left( \mathcal{A}; n \right) - \frac{Y}{qq_0} \right| .
\end{equation}

Since $R^2 W_2 \leq N^{\theta}$, we can appeal to \eqref{regcond1} and \eqref{lambbound} to majorize the right-hand side of \eqref{countq} by
\[ \ll \frac{Y}{q_0} \logl^{2k-(k+\varepsilon)} \ll \frac{\varphi(W_1)^k Y \logl^k}{q_0 W_2 W_1^k D_0} . \]

Applying Lemma~\ref{multfunction} with $f_1 = \varphi$, we see that
\[ S_1 = \frac{Y}{q_0W_2} \sum_{\mathbf{u}} \prod_{i=1}^k \varphi(u_i) \sideset{}{'} \sum_{\substack{\mathbf{d}, \; \mathbf{e} \\ u_i | d_i, e_i \; \forall i}} \frac{\lambda_{\mathbf{d}} \lambda_{\mathbf{e}}}{ \prod_{i=1}^k d_i e_i}  + O \left( \frac{\varphi(W_1)^k Y \logl^k}{q_0 W_2 W_1^k D_0} \right) . \]

Now we follow \cite{May} verbatim to transform this equation into
\begin{equation} \label{maytrans}
S_1 = \frac{Y}{q_0 W_2} \sum_{\mathbf{u}} \prod_{i=1}^k \varphi(u_i) \sideset{}{^*} \sum_{s_{1,2}, \cdots, s_{k,k-1}} \prod_{\substack{1 \leq i, j \leq k \\ i \neq j}} \mu \left( s_{i,j} \right) \sum_{\substack{\mathbf{d}, \; \mathbf{e} \\ u_i | d_i, e_i \; \forall i \\ s_{i,j} | d_i, e_j \; \forall i \neq j}} \frac{\lambda_{\mathbf{d}} \lambda_{\mathbf{e}}}{ \prod_{i=1}^k d_i e_i}  + O \left( \frac{\varphi(W_1)^k Y \logl^k}{q_0 W_2 W_1^k D_0} \right) . 
\end{equation}
Here $\sum^{*}$ indicates that $(s_{i,j}, u_iu_j )=1$ and $(s_{i,j} , s_{i, c}) = 1 = (s_{i,j} , s_{d,j})$, for $c \neq j$, $d \neq i$.  Now define
\begin{equation} \label{ajbjdef}
a_j = u_j \prod_{i \neq j} s_{j,i}, \; \; b_j = u_j \prod_{i \neq j} s_{i,j} .
\end{equation}
As in \cite{May}, we recast \eqref{maytrans} as
\begin{equation} \label{maytrans2}
S_1 = \frac{Y}{q_0 W_2} \sum_{\mathbf{u}} \prod_{i=1}^k \frac{\mu(u_i)^2}{\varphi(u_i)} \sideset{}{^*} \sum_{s_{1,2}, \cdots, s_{k,k-1}} \prod_{\substack{1 \leq i, j \leq k \\ i \neq j}} \mu \left( s_{i,j} \right) \sum_{\substack{\mathbf{d}, \; \mathbf{e} \\ u_i | d_i, e_i \; \forall i \\ s_{ij} | d_i, e_j \; \forall i \neq j}} \frac{\mu(s_{i,j})}{\varphi(s_{i,j})^2}  y_{\mathbf{a}} y_{\mathbf{b}}  + O \left( \frac{\varphi(W_1)^k Y \logl^k}{q_0 W_2 W_1^k D_0} \right) . 
\end{equation}

For the non-zero terms on the right-hand side of \eqref{maytrans2}, either $s_{i,j}=1$ or $s_{i,j} > D_0$.  The terms of the latter kind (for given $i, j, i \neq j$) contribute
\begin{equation} \label{maytrans2bound}
\ll \frac{Y}{q_0W_2} \left( \sum_{\substack{ u < R \\ (u, W_1)=1}} \frac{\mu(u)^2}{\varphi(u)} \right)^k \left( \sum_{s_{i,j} > D_0} \frac{\mu(s_{i,j})^2}{\varphi(s_{i,j})^2} \right) \left( \sum_{s \geq 1} \frac{\mu(s)}{\varphi(s)^2} \right)^{k^2-k-1} = \frac{Y}{q_0W_2} U_1 U_2 U_3 ,
\end{equation}
say.  Clearly, $U_3 \ll 1$.  Now if $u$ is squarefree, we have
\[ \frac{1}{\varphi(u)} = \frac{1}{u} \prod_{p |u} \left( 1 - \frac{1}{p} \right)^{-1} \ll \frac{1}{u} \sum_{a|u} \frac{1}{a}  \]
and
\[ \frac{1}{\varphi(u)^2} \ll \frac{1}{u^2} \prod_{p |u} \left( 1 + \frac{2}{p} \right) = \frac{1}{u^2} \sum_{a|u} \frac{2^{\omega(a)}}{a} \ll \frac{1}{u^2} \sum_{a|u} a^{-1/2}. \]
So \eqref{T1bound} and \eqref{T2bound} give, respectively,
\[ U_1 \ll \left( \frac{\varphi(W_1)}{W_1} \logl \right)^k \; \; \;  \mbox{and} \; \; \; U_2 \ll \frac{1}{D_0} . \]
Hence, the right-hand side of \eqref{maytrans2bound} is
\[ \ll \frac{\varphi(W_1)^k Y \logl^k}{q_0 W_2 W_1^k D_0} \]
and we have \eqref{S11st}. \newline

Now, we shall deduce Proposition~\ref{S1prop} from \eqref{S11st}.  Mindful of \eqref{lamddef}, we have
\[ S_1 = \frac{Y}{q_0W_2} \sum_{\substack{ \mathbf{u} \\ (u_l, u_j) =1 \; \forall l \neq j \\ (u_l , W_1) =1 \; \forall l}} \prod_{i=1}^k \frac{\mu(u_i)^2}{\varphi(u_i)} F \left( \frac{\log u_1}{\log R} , \cdots , \frac{\log u_k}{\log R} \right)^2 +  O \left( \frac{\varphi(W_1)^k Y \logl^k}{q_0 W_2 W_1^k D_0} \right) . \]
Note that the common prime factors of two integers both coprime to $W_1$ are strictly greater than $D_0$.  Thus, we may drop the condition $(u_l, u_j)=1$ in the above expression at the cost of an error of size
\[ \ll \frac{Y}{q_0W_2} \sum_{p > D_0} \sum_{\substack{u_1 \cdots u_k < R \\ p | u_l, u_j \\ (u_l, W_1)=1 \; \forall l}} \prod_{i=1}^k \frac{\mu(u_i)^2}{\varphi(u_i)} \ll \frac{Y}{q_0W_2} \sum_{p > D_0} \frac{1}{(p-1)^2} \left( \sum_{\substack{ u < R \\ (u,W_1)=1}} \frac{\mu(u)^2}{\varphi(u)} \right)^k \ll \frac{\varphi(W_1)^k Y \logl^k}{q_0 W_2 W_1^k D_0} , \]
by virtue of \eqref{T1bound}. \newline

It remains to evaluate the sum
\begin{equation} \label{S1mt}
\sum_{\substack{\mathbf{u} \\ (u_l, W_1)=1 \; \forall l}} \prod_{i=1}^k \frac{\mu(u_i)^2}{\varphi(u_i)} F \left( \frac{\log u_1}{\log R} , \cdots , \frac{\log u_k}{\log R} \right)^2.
\end{equation}
This requires applying Lemma~\ref{GGPYlem} $k$ times with
\[ \gamma (p) = \left\{ \begin{array}{cl} 0 & p | W_1, \\ 1 & p \nmid W_1 . \end{array} \right. \]
We take $A_1$ and $A_2$ to be suitable constants and
\[ L \ll 1 + \sum_{p | W_1} \frac{\log p}{p} \ll \log \logl  \]
as noted earlier.  In the $j$-th application, we replace the summation over $u_j$ by the integeral over $[0,1]$.  Ultimately, we express the sum in \eqref{S1mt} in the form
\[ \frac{\varphi(W_1)^k}{W_1^k} \left( \log R \right)^k I_k (F) + O \left( \frac{\varphi(W_1) (\log \logl) \logl^{k-1}}{W_1^k} \right) \]
and Proposition~\ref{S1prop} follows at once.
\end{proof}

We shall need the following lemma in the proof of Proposition~\ref{S2prop}.

\begin{lemma} \label{rm1ylem}
Let $1 \leq m \leq k$ and suppose that $r_m=1$.  Let
\[ y_{\mathbf{r}}^{(m)} = \prod_{i=1}^k \mu(r_i) g(r_i) \sum_{\substack{ \mathbf{d} \\ r_i | d_i \forall i \\ d_m=1}} \frac{\lambda_{\mathbf{d}}}{\prod_{i=1}^k \varphi(d_i)} . \]
Then
\[ y_{\mathbf{r}}^{(m)} = \sum_{a_m} \frac{y_{r_1, \cdots, r_{m-1}, a_m, r_{m+1}, \cdots , r_k}}{\varphi(a_m)} + O \left( \frac{\varphi(W_1) \logl}{W_1 D_0} \right). \]
\end{lemma}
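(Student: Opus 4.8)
The plan is to unfold the definition \eqref{lamddef} of $\lambda_{\mathbf{d}}$ inside the sum defining $y_{\mathbf{r}}^{(m)}$ and then interchange the order of summation, bringing the auxiliary tuple of \eqref{lamddef}, call it $\mathbf{e}=(e_1,\dots,e_k)$, to the outside. Since $y_{\mathbf{e}}=0$ unless $\mathbf{e}$ satisfies \eqref{tupprop}, the sum over $\mathbf{e}$ runs over squarefree, pairwise coprime tuples with $\bigl(\prod e_i,W_1\bigr)=1$, so all coprimality conditions are carried automatically by $y_{\mathbf{e}}$ and none need to be imposed by hand. After the interchange one is left, for each fixed $\mathbf{e}$, with an inner sum over $\mathbf{d}$ subject to $r_i\mid d_i\mid e_i$ for every $i$ together with $d_m=1$. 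As these constraints act independently on the different coordinates, this inner sum factors as $\prod_{i=1}^{k}\sigma_i$, where $\sigma_m$ is the contribution of $d_m=1$ and $\sigma_i=\sum_{r_i\mid d_i\mid e_i}\mu(d_i)d_i/\varphi(d_i)$ for $i\neq m$.

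I would evaluate the one-dimensional sums first. Since $r_m=1$, the condition $d_m=1$ forces $\sigma_m=1$. For $i\neq m$, writing $e_i=r_if_i$ with $(r_i,f_i)=1$ (possible as $e_i$ is squarefree and $r_i\mid e_i$) and $d_i=r_ic_i$ with $c_i\mid f_i$, one gets $\sigma_i=\frac{\mu(r_i)r_i}{\varphi(r_i)}\sum_{c_i\mid f_i}\frac{\mu(c_i)c_i}{\varphi(c_i)}=\frac{\mu(r_i)r_i}{\varphi(r_i)}\cdot\frac{\mu(f_i)}{\varphi(f_i)}$, using the elementary multiplicative identity $\sum_{c\mid f}\mu(c)c/\varphi(c)=\mu(f)/\varphi(f)$ for squarefree $f$. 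Substituting back, using $\varphi(e_i)=\varphi(r_i)\varphi(f_i)$ for $i\neq m$ and $\mu(r_m)g(r_m)=1$, and relabelling $a_m=e_m$, $f_i=e_i/r_i$ $(i\neq m)$, the expression becomes
\[
y_{\mathbf{r}}^{(m)}=\Bigl(\prod_{i\neq m}\frac{r_i\,g(r_i)}{\varphi(r_i)^2}\Bigr)\sum_{a_m}\frac{1}{\varphi(a_m)}\sum_{(f_i)_{i\neq m}}y_{r_1f_1,\dots,r_{m-1}f_{m-1},a_m,r_{m+1}f_{m+1},\dots,r_kf_k}\prod_{i\neq m}\frac{\mu(f_i)}{\varphi(f_i)^2}.
\]
Every prime factor of each $r_i$ exceeds $D_0$ (as $(r_i,W_1)=1$), so an Euler-product estimate with the defining totally multiplicative $g$ yields $\prod_{i\neq m}r_i\,g(r_i)/\varphi(r_i)^2=1+O(D_0^{-1})$; hence the diagonal term $f_i=1$ $(i\neq m)$ produces exactly $\sum_{a_m}y_{r_1,\dots,r_{m-1},a_m,r_{m+1},\dots,r_k}/\varphi(a_m)$ up to an admissible error.

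Finally I would bound the error, which has two sources: the $O(D_0^{-1})$ relative error on the diagonal, and the off-diagonal terms with $f_i\neq 1$ for some $i\neq m$. For both, bound $|y|\le 1$ and use that $\sum_{a_m\le R,\ (a_m,W_1)=1}\varphi(a_m)^{-1}\ll(\varphi(W_1)/W_1)\logl$, which follows from \eqref{T1bound} of Lemma~\ref{T1T2lem} since $1/\varphi(d)\ll d^{-1}\sum_{a\mid d}4^{\omega(a)}/a$. For the deviating coordinates one uses that each $f_i$ is coprime to $W_1$, so all its prime factors exceed $D_0$ and $\sum_{f>1,\ (f,W_1)=1}\mu^2(f)/\varphi(f)^2\ll\sum_{p>D_0}(p-1)^{-2}\ll D_0^{-1}$, while the remaining coordinates contribute $\sum_{f\ge 1,\ (f,W_1)=1}\mu^2(f)/\varphi(f)^2\ll 1$ each; summing over the boundedly many choices of the set of deviating indices, the total off-diagonal contribution is $O(\varphi(W_1)\logl/(W_1D_0))$, matching the stated error. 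The main obstacle is the multiplicative bookkeeping in the reassembly step: tracking the $\mu$, $g$ and $\varphi$ factors carefully enough to see that the diagonal coefficient is $1+O(D_0^{-1})$ uniformly in $\mathbf{r}$ — here the precise form of $g$ enters, since the per-prime deviation of $r_i\,g(r_i)/\varphi(r_i)^2$ from $1$ must be $O(p^{-2})$ for the error to be genuinely $O(D_0^{-1})$ rather than $O(\omega(r)D_0^{-1})$ — together with the observation that the support of $y_{\mathbf{e}}$ automatically enforces all coprimality conditions, so that no spurious cross terms arise.
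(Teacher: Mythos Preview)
Your proof is correct and follows essentially the same route as the paper's. The paper suppresses your explicit derivation of the intermediate formula by citing ``Following \cite{May} verbatim'' to obtain \eqref{lem5start}, which is exactly your displayed expression after the change of variables $a_i=r_if_i$; your error analysis---fixing a coordinate $j\neq m$ with $f_j\neq 1$, using $\sum_{f>1,\,(f,W_1)=1}\mu^2(f)/\varphi(f)^2\ll D_0^{-1}$ for that coordinate, $O(1)$ for the others, and $\sum_{(a_m,W_1)=1}\mu^2(a_m)/\varphi(a_m)\ll(\varphi(W_1)/W_1)\logl$ for the $m$-th---matches the paper's treatment of \eqref{ajneqrj}, and your final step replacing $\prod_{i\neq m}r_ig(r_i)/\varphi(r_i)^2$ by $1+O(D_0^{-1})$ is what the paper abbreviates as ``applying Lemma~\ref{multlem}''.
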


\begin{proof}
Following \cite{May} verbatim, we have
\begin{equation} \label{lem5start}
 y_{\mathbf{r}}^{(m)} = \prod_{i=1}^k \mu(r_i) g(r_i) \sum_{\substack{\mathbf{a} \\ r_i | a_i \forall i}} \frac{y_{\mathbf{a}}}{\prod_{i=1}^k \varphi(a_i)} \prod_{i\neq m} \frac{\mu(a_i)r_i}{\varphi(a_i)} .
 \end{equation}
Fix $j$, $1 \leq j \leq k$.  In \eqref{lem5start}, the nonzero terms will have either $a_j = r_j$ or $a_j > D_0r_j$.  The contribution from the terms with $a_j \neq r_j$ is
\begin{equation} \label{ajneqrj}
\ll \prod_{i=1}^k g(r_i) r_i \left( \sum_{\substack{a_j > D_0r_j \\ r_j | a_j}} \frac{\mu(a_j)^2}{\varphi(a_j)^2} \right) \left( \sum_{\substack{a_m < R \\ (a_m, W_1)=1}} \frac{\mu(a_m)^2}{\varphi(a_m)} \right) \prod_{\substack{ 1 \leq i \leq k \\ i \neq j, m}} \sum_{r_i | a_i} \frac{\mu(a_i)^2}{\varphi(a_i)^2} .
\end{equation}
Now, as before, from \eqref{T1bound} and \eqref{T2bound},
\[ \sum_{\substack{a_j > D_0r_j \\ r_j | a_j}} \frac{\mu(a_j)^2}{\varphi(a_j)^2} \ll \frac{1}{D_0 \varphi(r_j)^2} , \; \sum_{\substack{a_m < R \\ (a_m, W_1)=1}} \frac{\mu(a_m)^2}{\varphi(a_m)} \ll \frac{\varphi(W_1)}{W_1} \logl \]
and
\[ \sum_{r_i | a_i} \frac{\mu(a_i)^2}{\varphi(a_i)^2} \leq \frac{\mu(r_i)^2}{\varphi(r_i)^2} \sum_k \frac{\mu(k)}{\varphi(k)^2} \ll \frac{1}{\varphi(r_i)^2}, \]
majorizing \eqref{ajneqrj} by
\[ \ll \prod_{i=1}^k \frac{g(r_i)r_i}{\varphi(r_i)^2} \frac{\varphi(W_1)}{W_1 D_0} \logl \ll \frac{\varphi(W_1) \logl}{W_1 D_0} . \]
Hence \eqref{lem5start} becomes
\[  y_{\mathbf{r}}^{(m)} = \prod_{i=1}^k \frac{g(r_i)r_i}{\varphi(r_i)^2} \sum_{a_m} \frac{y_{r_1, \cdots, r_{m-1}, a_m, r_{m+1}, \cdots , r_k}}{\varphi(a_m)} + O \left( \frac{\varphi(W_1) \logl}{W_1 D_0} \right) , \]
and the proof is completed by applying Lemma~\ref{multlem}.
\end{proof}

Now we proceed to the proof of Proposition~\ref{S2prop}.

\begin{proof}[Proof of Proposition~\ref{S2prop}]
Let
\[ y_{\max}^{(m)} = \max_{\mathbf{r}} \left| y_{\mathbf{r}}^{(m)} \right|, \]
where $y_{\mathbf{r}}^{(m)}$ is defined in Lemma~\ref{rm1ylem}.  We shall first show that
\begin{equation} \label{firstthingprop2}
S_2 (g,m) = \frac{Y_{g,m}}{\varphi(q_0) \varphi(W_2)} \sum_{\mathbf{u}} \frac{\left( y_{\mathbf{u}}^{(m)}\right)^2}{\prod_{i=1}^k g(u_i)} + O \left( \frac{Y \logl^{k-2} \varphi^{k-1}(W_1) \left( y_{\max}^{(m)} \right)^2}{\varphi(q_0) \varphi(W_2) W_1^{k-1} D_0} + \frac{Y \logl^{k-\varepsilon}}{\varphi(q_0)} \right) .
\end{equation}
From the definition of $w_n$, we have
\begin{equation} \label{S2withWn}
S_2(g,m) = \sum_{\mathbf{d}, \mathbf{e}} \lambda_{\mathbf{d}} \lambda_{\mathbf{e}} \sum_{\substack{n \in \mathcal{A} \cap ( \mathcal{A} - h_m ) \\ N \leq n < 2N , \; n \equiv \nu_0 \bmod{W_2} \\ [d_i, e_i] | n+h_i \; \forall i}} \varrho_g(n+h_m) .
\end{equation}
As in the proof of Proposition~\ref{S1prop}, $\sum_{\mathbf{d}, \mathbf{e}}$ reduces to $\sum_{\mathbf{d}, \mathbf{e}}^{'}$.  Let $n' = n+h_m$.  Since $n+h_m \equiv a_0 \pmod{q_0}$ for $n \in \mathcal{A}$, the inner sum of \eqref{S2withWn} reduces to
\[ T( \mathbf{d}, \mathbf{e} ) : = \sum_{\substack{ n' \equiv \nu_0 + h_m \bmod{W_2} \\ n' \equiv a_0 \bmod{q_0} \\ n' \equiv h_m-h_i \bmod{[d_i,e_i]} \forall i}} X \left( \mathcal{A} \cap ( \mathcal{A} + h_m ) , n' \right) \varrho_g(n'). \]
Recall that $\varrho_g(n') = 0$ if $n'$ is divisible by a prime divisor of $[d_i,e_i]$.  Since one condition of the summation is $[d_m , e_m] | n'$, we have $T(\mathbf{d}, \mathbf{e}) = 0$ unless $d_m=e_m=1$.  When $d_m=e_m=1$,
\[ T(\mathbf{d}, \mathbf{e}) = \sum_{n \equiv a_q \bmod{qq_0}} X \left( \mathcal{A} \cap (\mathcal{A}+h_m) , n \right) \varrho_g(n) . \]
Here we have
\[ q = W_2 \prod_{i=1}^k [d_i, e_i], \; (a_q, q)=1, \; a_q \equiv a_0 \pmod{q_0} . \]
For $(a_q,q)=1$, we need $(h_m-h_i, [d_i,e_i])=1$ whenever $m \neq i$, which was noted earlier. \newline

Arguing as in the proof of Proposition~\ref{S1prop}, \eqref{regcond3} now gives
\[ S_2(g,m) = \frac{Y_{g,m}}{\varphi(q_0) \varphi(W_2)} \sideset{}{'} \sum_{\substack{\mathbf{d}, \mathbf{e} \\ d_m=e_m=1}} \frac{\lambda_{\mathbf{d}} \lambda_{\mathbf{e}}}{\prod_{i=1}^k \varphi([d_i,e_i])} + O \left( \frac{Y \logl^{k-\varepsilon}}{\varphi(q_0)} \right). \]

With $a_j$ and $b_j$ as in \eqref{ajbjdef}, we follow \cite{May} to obtain
\begin{equation} \label{S2afterMay}
S_2(g,m) = \frac{Y_{g,m}}{\varphi(q_0)\varphi(W_2)} \sum_{\mathbf{u}} \prod_{i=1}^k \frac{\mu(u_i)^2}{g(u_i)} \sideset{}{^*} \sum_{s_{1,2}, \cdots, s_{k,k-1}} \prod_{\substack{1 \leq i,j \leq k \\ i \neq j}} \frac{\mu(s_{i,j}}{g^2(s_{i,j})} y_{\mathbf{a}}^{(m)} y_{\mathbf{b}}^{(m)} + O \left( \frac{Y \logl^{k-\varepsilon}}{\varphi(q_0)} \right).
\end{equation}
Here $q$ is the totally multiplicative function with $g(p) = p-2$ for all $p$ and we have used Lemma~\ref{multfunction} with $f_1=g$. \newline

The contribution to the sum in \eqref{S2afterMay} from $s_{i,j} \neq 1$ (for given $i,j$) is
\begin{equation} \label{not1contrib}
\begin{split}
\ll \frac{Y \left( y_{\max}^{(m)} \right)^2}{\varphi(q_0) \varphi(W_2) \logl} & \left( \sum_{\substack{ u < R \\ (u, W_1)=1}} \frac{\mu(u)^2}{g(u)} \right)^{k-1} \left( \sum_s \frac{\mu(s)^2}{g(s)^2} \right)^{k(k-1)-1} \left( \sum_{s_{i,j}>D_0} \frac{\mu(s_{i,j})^2}{g(s_{i,j})^2} \right) \\ & = \frac{Y \left( y_{\max}^{(m)} \right)^2}{\varphi(q_0) \varphi(W_2) \logl} V_1 V_2 V_3,
\end{split}
\end{equation}
say.  Clearly, $V_2 \ll 1$.  Using \eqref{T1bound} while mindful of the estimate
\[ \frac{1}{g(s)} \ll \frac{1}{s} \sum_{a|s} \frac{2^{\omega(a)}}{a} \]
yields that
\[ V_1 \ll \left( \frac{\varphi(W_1)}{W_1} \logl \right)^{k-1} . \]
From \eqref{T2bound} and the observation that, for $s$ squarefree,
\[ \frac{1}{g^2(s)} \ll \frac{1}{s^2} \sum_{a|s} \frac{4^{\omega(a)}}{a} \ll \frac{1}{s^2} \sum_{a|s} a^{-1/2}  , \]
we get that
\[ V_3 \ll D_0^{-1} . \]
Note the bound in \eqref{not1contrib} is
\[ \ll  \frac{Y \left( y_{\max}^{(m)} \right)^2 \logl^{k-2}}{\varphi(q_0) \varphi(W_2)} \left( \frac{\varphi(W_1)}{W_1} \right)^{k-1} \frac{1}{D_0} , \]
and we have established \eqref{firstthingprop2}. \newline

Now we use Lemma~\ref{rm1ylem} in \eqref{firstthingprop2}, recalling \eqref{yrdef}.  When $r_m=1$,
\begin{equation} \label{yrm=1}
\begin{split}
y_{\mathbf{r}}^{(m)} = \sum_{\left( u, W_1 \prod_{i=1}^k r_i \right)=1} \frac{\mu(u)^2}{\varphi(u)} F &\left( \frac{\log r_1}{\log R} , \cdots , \frac{\log r_{m-1}}{\log R} , \frac{\log u}{\log R} , \frac{\log r_{m+1}}{\log R}, \cdots , \frac{\log r_k}{\log R} \right) \\
& + O \left( \frac{\varphi(W_1) \logl}{ W_1 D_0} \right) .
\end{split}
\end{equation}
From this, we find that
\[ y_{\max}^{(m)} \ll \frac{\varphi(W_1)}{W_1} \logl . \]
We shall apply Lemma~\ref{GGPYlem} to \eqref{yrm=1} with $\kappa=1$,
\[ \gamma(p) = \left\{ \begin{array}{cl} 1, & p \nmid W_1 \prod_{i=1}^k r_i \\ 0 , & \mbox{otherwise}, \end{array} \right. \]
$A_1$, $A_2$ suitably chosen and
\[ L \ll \log \logl  \]
(similar to the proof of \eqref{T1bound}).  Define
\[ F_{\mathbf{r}}^{(m)} = \int\limits_0^1 F \left( \frac{\log r_1}{\log R} , \cdots , \frac{\log r_{m-1}}{\log R} , t_m , \frac{\log r_{m+1}}{\log R}, \cdots , \frac{\log r_k}{\log R} \right) \dif t_m . \]
We obtain that
\[ y_{\mathbf{r}}^{(m)} = \log R \frac{\varphi(W_1)}{W_1} \left( \prod_{i=1}^k \frac{\varphi(r_i)}{r_i} \right) F_{\mathbf{r}}^{(m)} + O \left( \frac{\varphi(W_1) \logl}{W_1 D_0} \right) . \]
Inserted into \eqref{firstthingprop2}, the above produces a main term
\begin{equation} \label{usefirstthing}
\frac{(\log R)^2 Y_{g,m} \varphi(W_1)^2}{\varphi(q_0) \varphi(W_2) W_1^2} \sum_{\substack{ \mathbf{r} \\ (r_i, W_1)=1 \forall i \\ (r_i, r_j)=1 \forall i \neq j \\ r_m=1}} \prod_{i=1}^k \frac{\varphi(r_i) \mu(r_i)^2}{g(r_i)r_i^2} \left( F_{\mathbf{r}}^{(m)} \right)^2
\end{equation}
and an error term of size
\[ \ll \frac{Y_{g,m}}{\varphi(q_0)\varphi(W_2)} \sum_{\substack{ \mathbf{r} \\ r_m=1}} \frac{\varphi(W_1)^2 \logl^2}{W_1^2 D_0 \prod_{i=1}^k g(r_i)} \ll  \frac{Y\varphi(W_1)^2 \logl^2}{\varphi(q_0)\varphi(W_2) W_1^2 D_0} \left( \sum_{\substack{r < R \\ (r,W_1)=1}} \frac{1}{g(r)} \right)^{k-1} \ll \frac{Y\varphi(W_1)^{k+1} \logl^k}{\varphi(q_0)\varphi(W_2) W_1^{k+1} D_0} . \]
Recall that $Y_{g,m} \ll Y \logl^{-1}$.  Now we remove the condition $(r_i,r_j)=1$ from \eqref{usefirstthing}.  As before, this introduces an error of size
\[ \ll \frac{\logl^2 Y \varphi(W_1)^2}{\varphi(q_0) \varphi(W_2) W_1^2} \left( \sum_{p > D_0} \frac{\varphi(p)^2}{g(p)^2 p^2} \right) \left( \sum_{\substack{r <R \\ (r,W_1)=1}} \frac{\mu(r)^2 \varphi(r)}{g(r) r} \right)^{k-1} \ll \frac{Y\logl^k \varphi(W_1)^{k+1}}{\varphi(q_0)\varphi(W_2) W_1^{k+1} D_0}  \]
by an application of Lemma~\ref{T1T2lem}.  Combining all our results, we get
\[ S_2(g,m) = \frac{(\log R)^2 Y_{g,m} \varphi(W_1)^2}{\varphi(q_0) \varphi(W_2) W_1^2} \sum_{\substack{ \mathbf{r} \\ (r_i, W_1)=1 \forall i \\ r_m=1}} \prod_{i=1}^k \frac{\varphi(r_i)^2 \mu(r_i)^2}{g(r_i) r_i^2} \left( F_{\mathbf{r}}^{(m)} \right)^2 + O \left( \frac{Y\varphi(W_1)^{k+1} \logl^k}{\varphi(q_0) \varphi(W_2) W_1^{k+1} D_0} \right) . \]
The last sum is evaluated by applying Lemma~\ref{GGPYlem} to each summation variable in turn, taking
\[ \gamma(p) = \left\{ \begin{array}{cl} \frac{p^3-2p^2+p}{p^3-p^2-2p+1} , & p \nmid W_1 \\ 0 , & p | W_1 \end{array} \right. \]
to produce the right value of $\gamma(p)/(p-\gamma(p))$.  Of course
\[ S = \frac{\varphi(W_1)}{W_1} \left( 1 + O (D_0^{-1}) \right) \]
by Lemma~\ref{multlem}, while $L \ll \log \logl$.  Our final conclusion is that
\[ S_2(g,m) = \frac{(\log R)^{k+1} Y_{g,m} \varphi(W_1)^{k+1} J_k^{(m)}}{\varphi(q_0) \varphi(W_2) W_1^{k+1}} \left( 1 + o (1) \right) \]
completing the proof.
\end{proof}

\section{Further Lemmas}

Let $\gamma= \alpha^{-1}$.  As noted in \cite{BaSh}, the set of $[ \alpha m + \beta ]$ in $[N, 2N)$ may be written as
\[ \{ n \in [N, 2N) : \gamma n \in ( \gamma \beta - \gamma, \beta \gamma] \pmod{1} \} . \]

\begin{lemma} \label{intervallem}
Let $I=(a,b)$ be an interval of length $l$ with $0 < l < 1$ and let $h$ be a natural number satisfying
\[ 0 < -h \gamma < 2 \varepsilon \pmod{1} , \]
where $2 \varepsilon < l$.  Let
\[ \mathcal{A} = \{ n \in [N, 2N) : \gamma n \in I \pmod{1} \} . \]
Then
\[ \mathcal{A} \cap (\mathcal{A}+h) = \{ n \in [N+h, 2N) : \gamma n \in J \pmod{1} \} \]
where $J$ is an interval of length $l'$ with
\[ l - 2 \varepsilon < l' < l .\]
\end{lemma}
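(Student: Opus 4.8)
\medskip

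The plan is to unwind the two membership conditions defining $\mathcal A\cap(\mathcal A+h)$ and then to recognise the outcome as a single interval condition of the stated shape. First, since $h$ is a fixed positive integer while $N$ is large (in the applications $h\le D(\mathcal H)$ is bounded), we have $0<h<N$, whence
\[
[N,2N)\cap\bigl([N,2N)+h\bigr)=[N+h,2N).
\]
Hence $n\in\mathcal A\cap(\mathcal A+h)$ precisely when $N+h\le n<2N$ and the two conditions $\gamma n\in I\pmod 1$ and $\gamma(n-h)\in I\pmod 1$ both hold, and it remains only to reinterpret these two congruence conditions.

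The second condition reads $\gamma n\in I+\gamma h\pmod 1$. By hypothesis the fractional part of $-\gamma h$ is some $\delta$ with $0<\delta<2\varepsilon$, so $\gamma h\equiv-\delta\pmod 1$, and therefore $I+\gamma h\equiv I-\delta\pmod 1$. Writing $\overline I$ for the image of $I$ in $\rear/\intz$, the two conditions on $n$ are thus together equivalent to $\gamma n\bmod 1\in\overline I\cap(\overline I-\delta)$, the intersection of two arcs of common length $l$ offset by $\delta$.

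The last step is to compute that intersection. With $I=(a,b)$, $b-a=l$, and working in the fundamental domain $[a,a+1)$, one checks that $\overline I-\delta$ meets $[a,a+1)$ in $[a,b-\delta)\cup(a+1-\delta,a+1)$ up to endpoints, the inequality $0<\delta<2\varepsilon<l<1$ being used to locate these endpoints. Intersecting with $\overline I=(a,b)$ removes the second piece, because $\delta$ is so small that $\delta\le 1-l$ (which is the only regime that occurs, since $\varepsilon$ may be taken small compared with $1-\gamma$), leaving the single arc corresponding to $J=(a,b-\delta)$, of length $l'=l-\delta$. Since $0<\delta<2\varepsilon$ we get $l-2\varepsilon<l'<l$, and combined with the range restriction $N+h\le n<2N$ this is precisely the asserted description of $\mathcal A\cap(\mathcal A+h)$.

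The one point needing care is the elementary bookkeeping in $\rear/\intz$: one must verify that the overlap of the two arcs does not break into two components, and this is exactly what the smallness of $\delta$ (hence of $\varepsilon$) relative to both $l$ and $1-l$ guarantees. Everything else is a direct translation of the definitions.
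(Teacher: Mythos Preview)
Your argument is correct and follows the same route as the paper's proof: set $t$ (your $\delta$) equal to the fractional part of $-h\gamma$, observe that $n\in\mathcal A\cap(\mathcal A+h)$ forces $N+h\le n<2N$ together with $\gamma n\in(a,b)$ and $\gamma n+t\in(a,b)\pmod 1$, and conclude $J=(a,b-t)$. The only difference is that you spell out the $\rear/\intz$ bookkeeping (why the overlap of the two arcs is a single arc rather than two pieces), whereas the paper simply asserts the conclusion in one line.
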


\begin{proof}
Let $t \equiv -h \gamma \pmod{1}$, $0 < t < 2 \varepsilon$.  Clearly $\mathcal{A} \cap ( \mathcal{A}+h)$ consists of the integers in $[N+h, 2N)$ for which
\[ \gamma n \in (a,b) \pmod{1}, \; \gamma n + t \in (a,b) \pmod{1} .\]
The lemma follows with $J=(a,b-t)$.
\end{proof}

\begin{lemma} \label{Baklem}
Let $I$ be an interval of length $l$, $0 < l < 1$.  Let $x_1$, $\cdots$, $x_N$ be real.  Then
\begin{enumerate}[(i)]
\item \label{Baklem1} There exists $z$ such that
\[ \# \left\{ j \leq N : x_j \in z + I \pmod{1} \right\} \geq N l. \]
\item \label{Baklem2} We have (for $a_j \geq 0$, $j=1, \cdots, N$ and $L \geq 1$)
\[ \sum_{\mathclap{\substack{j=1 \\ x_j \in I \bmod{1}}}}^N a_j - l \sum_{j=1}^N a_j \ll L^{-1} \sum_{j=1}^N a_j + \sum_{h=1}^L h^{-1} \left| \sum_{j=1}^N a_j e(h x_j) \right| . \]
\end{enumerate}
\end{lemma}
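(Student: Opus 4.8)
The plan is to prove the two parts separately; both are routine harmonic analysis on the circle $\rear/\intz$.

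For part~(i) I would run the standard averaging argument over translates. Regarding $I$ as an arc of length $l$ in $\rear/\intz$, put
\[
f(z) = \# \left\{ j \leq N : x_j \in z + I \bmod{1} \right\} = \sum_{j=1}^N X \left( I ; x_j - z \right) \qquad ( z \in [0,1) ) .
\]
For each fixed $j$, the set of $z \in [0,1)$ with $x_j - z \in I \bmod{1}$ is an arc of length $l$, so $\int_0^1 f(z) \, \dif z = Nl$. Since $f$ takes only finitely many nonnegative integer values it attains its supremum, and that supremum is at least the mean value $Nl$; taking any $z$ at which $f$ is maximal proves part~(i).

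For part~(ii) the plan is to trap $X(I;\cdot)$ between the Beurling--Selberg extremal trigonometric polynomials of degree $L$ associated to the arc $I$ (a standard construction). These are real trigonometric polynomials
\[
S^{\pm}(x) = \sum_{|h| \leq L} c_h^{\pm} e(hx) , \qquad S^-(x) \leq X(I; x) \leq S^+(x) \quad ( x \in \rear/\intz ) ,
\]
with $c_0^{\pm} = l \pm (L+1)^{-1}$ and $\left| c_h^{\pm} \right| \ll \min \left( l, |h|^{-1} \right) \leq |h|^{-1}$ for $1 \leq |h| \leq L$. Since the $a_j$ are nonnegative, the quantity $\sum_{j \leq N,\, x_j \in I} a_j = \sum_{j=1}^N a_j X(I; x_j)$ lies between $\sum_j a_j S^-(x_j)$ and $\sum_j a_j S^+(x_j)$. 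Expanding $S^{\pm}$ and separating the $h = 0$ term,
\[
\sum_{j=1}^N a_j S^{\pm}(x_j) = \Big( l \pm \frac{1}{L+1} \Big) \sum_{j=1}^N a_j + \sum_{1 \leq |h| \leq L} c_h^{\pm} \sum_{j=1}^N a_j e(h x_j) ,
\]
so in either case
\[
\left| \sum_{\substack{j \leq N \\ x_j \in I \bmod{1}}} a_j - l \sum_{j=1}^N a_j \right| \ll \frac{1}{L} \sum_{j=1}^N a_j + \sum_{1 \leq |h| \leq L} \left| c_h^{\pm} \right| \left| \sum_{j=1}^N a_j e(h x_j) \right| .
\]
Because the $a_j$ are real, $\big| \sum_j a_j e(-h x_j) \big| = \big| \sum_j a_j e(h x_j) \big|$ and $|c_{-h}^{\pm}| = |c_h^{\pm}|$, so the last sum equals twice the sum over $1 \leq h \leq L$; inserting $|c_h^{\pm}| \ll h^{-1}$ gives exactly the asserted inequality.

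I do not anticipate a genuine obstacle; the one step needing care is quoting the Selberg polynomials with the correct normalisation, namely $c_0^{\pm} = l + O(1/L)$ together with $|c_h^{\pm}| \ll 1/|h|$ for $1 \leq |h| \leq L$. The half-open nature of $I$ is immaterial: since $x_1, \dots, x_N$ are finitely many fixed points, $I$ may be replaced by a closed subinterval (or a closed superinterval) whose length differs from $l$ by an arbitrarily small amount without changing any of the conditions $x_j \in I \bmod{1}$, so it suffices to have the extremal polynomials for closed intervals.
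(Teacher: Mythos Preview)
Your argument is correct and is precisely the standard route. The paper itself gives no self-contained proof: it leaves part~(i) as an exercise and for part~(ii) merely cites \cite[Theorem~2.1]{Bak}, which is the Erd\H{o}s--Tur\'an inequality proved via exactly the Beurling--Selberg majorant/minorant construction you describe. So your proposal is essentially a faithful expansion of what the paper defers to a reference.
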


\begin{proof}
We leave (1) as an exercise;  (2) is a slight variant of \cite[Theorem 2.1]{Bak}.
\end{proof}

\begin{lemma} \label{splitdiag}
Let $1 \leq Q \leq N$ and $F$ a nonnegative function defined on Dirichlet characters.  Then for some $Q_1$, $1 \leq Q_1 \leq Q$,
\[ \sum_{q \leq Q} \ \sideset{} {'} \sum_{\chi \bmod{q}} F ( \hat{\chi} ) \ll \frac{\logl Q}{Q_1} \sum_{Q_1 \leq q_1 < 2Q_1}  \ \sideset{} {^{\star}} \sum_{\psi \bmod{q_1}} F(\psi) . \]
\end{lemma}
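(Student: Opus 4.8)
The plan is to expand each induced character $\hat\chi$ in terms of the primitive character that induces it, regroup the double sum according to the conductor, and then isolate a single dyadic block by the pigeonhole principle. We may assume $Q\ge 2$, since for $Q=1$ the left-hand side is empty.

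First I would use the elementary fact that every nonprincipal character $\chi\bmod{q}$ is induced by a unique primitive character $\psi=\hat\chi$ of some conductor $d$ with $d\mid q$ and $2\le d\le q$, and that, conversely, a given primitive character $\psi\bmod{d}$ with $d\ge 2$ induces exactly one character to each modulus $q$ divisible by $d$. Since $\#\{q\le Q:d\mid q\}=\lfloor Q/d\rfloor\le Q/d$, interchanging the order of summation and using $F\ge 0$ gives
\[
\sum_{q\le Q}\ \sideset{}{'}\sum_{\chi\bmod{q}}F(\hat\chi)
=\sum_{2\le d\le Q}\left\lfloor\frac{Q}{d}\right\rfloor\ \sideset{}{^{\star}}\sum_{\psi\bmod{d}}F(\psi)
\le Q\sum_{2\le d\le Q}\frac{1}{d}\ \sideset{}{^{\star}}\sum_{\psi\bmod{d}}F(\psi).
\]

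Next I would split the range $2\le d\le Q$ into the dyadic intervals $[2^j,2^{j+1})$ for $0\le j\le\lfloor\log_2 Q\rfloor$; there are $\ll\log Q\ll\logl$ of them, using $Q\le N$. On the block with $2^j=:Q_1$ we have $d^{-1}\le Q_1^{-1}$, so that block contributes at most $(Q/Q_1)\sum_{Q_1\le q_1<2Q_1}\sideset{}{^{\star}}\sum_{\psi\bmod{q_1}}F(\psi)$, where positivity of $F$ again lets us extend the $q_1$-sum past $Q$ when $2Q_1>Q$. Summing over the $\ll\logl$ blocks and then, by pigeonhole, selecting the block $Q_1\in[1,Q]$ whose contribution is largest — hence at least a fraction $\gg1/\logl$ of the total — yields the asserted bound for that value of $Q_1$. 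There is no substantial obstacle in the argument; the only points needing care are the standard bookkeeping that a nonprincipal $\chi$ has conductor at least $2$ (so the inner sum on the right genuinely runs over nontrivial primitive characters), the count $\#\{q\le Q:d\mid q\}=\lfloor Q/d\rfloor$, and the repeated use of $F\ge 0$, both to enlarge the $q_1$-range and to replace the sum over the $\ll\logl$ dyadic blocks by its largest term.
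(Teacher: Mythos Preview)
Your proof is correct and follows essentially the same route as the paper: rewrite the double sum over nonprincipal characters as a sum over primitive characters weighted by $\lfloor Q/d\rfloor\le Q/d$, then apply a dyadic splitting in the conductor and pigeonhole to isolate a single block $[Q_1,2Q_1)$. The paper's proof is more terse but identical in substance; your added remarks about conductors being at least $2$, extending the $q_1$-range by positivity, and the bound $\log Q\ll\logl$ are all appropriate bookkeeping.
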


\begin{proof}
We recall that $\hat{\chi}$ is the primitive character that induces $\chi$, so that $F( \hat{\chi} )$ may be quite different from $F (\chi)$. \newline

The left-hand side of the claimed inequality is
\[ \sum_{q_1 \leq Q} \ \sideset{} {^{\star}} \sum_{\psi \bmod{q_1}} F( \psi) \sum_{\substack{\chi \bmod{q} \\ q \leq Q, \; q_1 | q \\ \psi \; \mathrm{induces} \; \chi}} 1 \leq \sum_{q_1 \leq Q} \ \sideset{} {^{\star}} \sum_{\psi \bmod{q_1}} F ( \psi ) \frac{Q}{q_1} .  \]

The lemma follows on applying a splitting-up argument to $q_1$.
\end{proof}

\begin{lemma} \label{BaBalem}
Let $f(j)$ ($j \geq 1$) be a periodic function with period $q$,
\[ S(f, n) = \sum_{j=1}^n f(j) e \left( - \frac{nj}{q} \right) ,\]
$F>0$, and $R \geq 1$.  Let $H(y)$ be a real function with $H'(y)$ monotonic and
\[ |H'(y))| \leq Fy^{-1} \]
for $R \leq y \leq 2R$.  Then for $J= [R,R']$ with $R < R' \leq 2R$,
\[ \sum_{m \in J} f(m) H(m) - q^{-1} \sum_{1 \leq |n| \leq 2FqR^{-1}} S(f,n) \int\limits_J e \left( \frac{ny}{q} + H(y) \right) \dif y \ll \frac{R|S(f,0)|}{qF} + \sum_{|n| \in J'} \frac{|S(f,n)|}{n} , \]
where
\[ J' = [ \min \{2FqR^{-1}, q/2 \} , \max \{ 2FqR^{-1} , q \} + q ] . \]
\end{lemma}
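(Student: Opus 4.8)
The plan is to complete $f$ into a finite Fourier series modulo $q$, turning the twisted sum into a combination of pure exponential sums, and then to treat those by a truncated Poisson (Euler--Maclaurin) argument: frequencies that are large compared with $F$ are discarded using a first-derivative estimate, while the remaining small frequencies have their exponential sums replaced by the corresponding integrals. Concretely, Fourier inversion modulo $q$ gives $f(m) = q^{-1}\sum_{n \bmod q} S(f,n)\,e(nm/q)$, so, taking the residues $n$ in $(-q/2,\,q/2]$,
\[
\sum_{m\in J} f(m)\,e(H(m)) \;=\; \frac1q \sum_{-q/2 < n \le q/2} S(f,n)\,T_n, \qquad T_n := \sum_{m\in J} e\!\left(\tfrac{nm}{q}+H(m)\right).
\]
Each phase $\phi_n(y)=ny/q+H(y)$ has \emph{monotonic} derivative $\phi_n'(y)=n/q+H'(y)$, with $|\phi_n'(y)-n/q|=|H'(y)|\le F/R$ throughout $J\subset[R,2R]$.

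For the large frequencies, if $|n|>2FqR^{-1}$ then $|H'(y)|<|n|/(2q)$ on $J$, so $\phi_n'$ has constant sign there with $|n|/(2q)\le|\phi_n'(y)|\le 3|n|/(2q)\le 3/4$; hence $\|\phi_n'(y)\|=|\phi_n'(y)|\ge|n|/(2q)$ on $J$, and the Kusmin--Landau inequality yields $T_n\ll q/|n|$. These frequencies therefore contribute
\[
\ll \frac1q\sum_{2FqR^{-1}<|n|\le q/2}|S(f,n)|\cdot\frac{q}{|n|} \;=\; \sum_{2FqR^{-1}<|n|\le q/2}\frac{|S(f,n)|}{|n|},
\]
which is absorbed into $\sum_{|n|\in J'}|S(f,n)|/|n|$, the two cases $2FqR^{-1}\le q/2$ and $2FqR^{-1}>q/2$ (the latter exploiting the $q$-periodicity of $S(f,\cdot)$ in $n$) being exactly what forces the $\min$ and $\max$ in the definition of $J'$.

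For the small frequencies $|n|\le 2FqR^{-1}$ (including $n=0$) we replace $T_n$ by $\int_J e(\phi_n(y))\,\dif y$. Writing the error via the Euler--Maclaurin formula and the Fourier expansion $\{y\}-\tfrac12=-(2\pi i)^{-1}\sum_{h\ne0}h^{-1}e(hy)$ of the sawtooth function, the difference $T_n-\int_J e(\phi_n(y))\,\dif y$ becomes a sum of oscillatory integrals with phases $hy+\phi_n(y)$, together with boundary terms at the endpoints of $J$; integration by parts, using that $\phi_n'$ is monotonic of size $\ll|n|/q+F/R$, bounds these, and on summing over $n$ one recovers once more a quantity of size $\sum_{|n|\in J'}|S(f,n)|/|n|$, plus the term $R|S(f,0)|/(qF)$ arising from the frequency $n=0$, where $\phi_0'=H'$ and the bound $|H'(y)|\le F/R$ is applied directly. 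Collecting the three ranges of $n$ gives the asserted estimate.

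The main obstacle will be Step three: effecting the transition from $T_n$ to the integral uniformly in $n$ and in the possibly large parameter $F$, even though the combined derivative $n/q+H'(y)$ need not stay away from the integers, and arranging the Euler--Maclaurin bookkeeping and the endpoint contributions so that everything collapses into just the two error terms on the right-hand side. Pinning down the exact range $J'$ through the interplay of the quantities $2FqR^{-1}$, $q/2$ and $q$ will be the fussiest part of that.
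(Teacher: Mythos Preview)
The paper does not prove this lemma; its entire proof is the sentence ``This is \cite[Theorem 8]{BaBa}.'' Your sketch is the standard route to results of this shape---finite Fourier expansion of the periodic weight $f$ modulo $q$, followed by the Kusmin--Landau/first-derivative bound on the frequencies with $|n|>2FqR^{-1}$ and a Poisson/Euler--Maclaurin replacement of sum by integral on the remaining frequencies---and is essentially the argument one finds in the cited Baker--Balog paper. So your approach is correct and matches what lies behind the citation; the paper simply outsources the work.

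One small remark on your write-up: you have correctly interpreted the statement as involving $\sum_{m\in J} f(m)\,e(H(m))$ and $S(f,n)=\sum_{j=1}^{q} f(j)\,e(-nj/q)$, repairing two evident typos in the displayed lemma (the paper's application in Lemma~\ref{Ssumestnew} confirms this reading). Your own caveat about Step three is apt: the uniform sum-to-integral comparison for the small frequencies, together with the bookkeeping that forces the precise interval $J'$ in both regimes $2FqR^{-1}\lessgtr q/2$, is where all the care goes, and you have only outlined it. That is fine for a proposal, but a complete proof would need those details spelled out as in \cite{BaBa}.
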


\begin{proof}
This is \cite[Theorem 8]{BaBa}.
\end{proof}

For a finite sequence $\{ a_k : K \leq k < K' \}$, set
\[ \| a \|_2 = \left( \sum_{K \leq k < K'} |a_k|^2 \right)^{1/2} . \]

\begin{lemma}
Let $R \geq 1$, $M \geq 1$, $H \geq 1$.  Let $\beta$ be real and
\begin{equation} \label{betadiriapprox}
 \left| \beta - \frac{u_1}{r_1} \right| \leq \frac{H}{r_1^2}
\end{equation}
where $r_1 \geq H$ and $(u_1, r_1)=1$.  Then for $M_1 \in \natn$,
\begin{equation} \label{spacingbound}
\sum_{m=M_1+1}^{M_1+M} \min \left( R, \frac{1}{\| m \beta \|} \right) \ll \left( \frac{HM}{r_1} + 1 \right) \left( R + r_1 \log r_1 \right) .
\end{equation}
If $M < r_1$ and
\[ M \left| \beta - \frac{u_1}{r_1} \right| \leq \frac{1}{2r_1} , \]
then
\begin{equation} \label{spacingbound2}
\sum_{m=1}^M \frac{1}{\| m \beta \|} \ll r_1 \log 2 r_1 .
\end{equation}
\end{lemma}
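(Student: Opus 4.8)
The plan is to establish \eqref{spacingbound} and \eqref{spacingbound2} separately; throughout write $\beta = u_1/r_1 + \delta$ with $|\delta| \leq H/r_1^2$, and recall $(u_1,r_1)=1$ and $r_1 \geq H \geq 1$.

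For \eqref{spacingbound} I would set $L_0 = \lceil r_1/(2H) \rceil$, so that $1 \leq L_0 \leq r_1$ and, by a short computation from $r_1 \geq H \geq 1$, $L_0|\delta| \leq 1/r_1$. Partitioning $\{M_1+1,\dots,M_1+M\}$ into $\ll M/L_0 + 1 \ll HM/r_1 + 1$ consecutive sub-blocks of length at most $L_0$, it suffices to bound the contribution of a single sub-block $\{m_0+1,\dots,m_0+L_0\}$ by $\ll R + r_1\log 2r_1$. Setting $\theta = m_0\beta$, the contribution is $\sum_{j=1}^{L_0}\min(R,\|\theta+j\beta\|^{-1})$. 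The $L_0$ points $\theta + ju_1/r_1$ ($1 \leq j \leq L_0 \leq r_1$) are distinct members — since $(u_1,r_1)=1$ — of the coset $\{\theta + a/r_1 : a \in \intz\}$, whose successive elements are $1/r_1$ apart; perturbing each by $|j\delta| \leq 1/r_1$, a point can enter a given half-open interval of length $1/r_1$ only if its unperturbed position lay in a fixed interval of length $3/r_1$, which meets the coset in at most $4$ points. Hence for every $l \geq 0$,
\[ \#\bigl\{1 \leq j \leq L_0 : \|\theta+j\beta\| \in [\,l/r_1,\,(l+1)/r_1)\bigr\} \ll 1 , \]
and since $\|\theta+j\beta\| \leq 1/2$ always, grouping the sum over these ranges gives
\[ \sum_{j=1}^{L_0}\min\Bigl(R,\frac{1}{\|\theta+j\beta\|}\Bigr) \ll R + \sum_{1 \leq l \leq r_1/2}\min\Bigl(R,\frac{r_1}{l}\Bigr) \ll R + r_1\log 2r_1 . \]
Summing over the $\ll HM/r_1 + 1$ sub-blocks yields \eqref{spacingbound}; for $r_1 = 1$ the hypothesis forces $H = 1$ and $\min(R,\cdot) \leq R$ makes the bound trivial, while for $r_1 \geq 2$ one has $\log 2r_1 \ll \log r_1$.

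For \eqref{spacingbound2}, assume $M < r_1$ and $M|\delta| \leq 1/(2r_1)$. For $1 \leq m \leq M$ one has $r_1 \nmid mu_1$, hence $\|mu_1/r_1\| \geq 1/r_1$, so $|m\delta| \leq M|\delta| \leq 1/(2r_1) \leq \tfrac12\|mu_1/r_1\|$ and therefore $\|m\beta\| \geq \tfrac12\|mu_1/r_1\|$. The residues $mu_1 \bmod r_1$ ($1 \leq m \leq M < r_1$) are distinct and nonzero, and for each integer $b$ with $1 \leq b \leq r_1/2$ at most two of them yield $\|mu_1/r_1\| = b/r_1$; consequently
\[ \sum_{m=1}^{M}\frac{1}{\|m\beta\|} \leq 2\sum_{m=1}^{M}\frac{1}{\|mu_1/r_1\|} \leq 4\sum_{1 \leq b \leq r_1/2}\frac{r_1}{b} \ll r_1\log 2r_1 . \]

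The crux is the single-sub-block estimate in \eqref{spacingbound}: the sub-block length must be taken of order $r_1/H$ precisely so that the perturbation $j\delta$ stays below the spacing $1/r_1$ of the coset $\theta + a/r_1$, which is what turns the equidistribution modulo $r_1$ of the points $mu_1/r_1$ into the $O(1)$-per-interval bound for the points $m\beta$. Granting that, counting the sub-blocks, carrying out the (essentially harmonic) summation in $l$, and the short argument for \eqref{spacingbound2} are all routine.
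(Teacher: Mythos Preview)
Your proof is correct and follows essentially the same approach as the paper's: for \eqref{spacingbound} you both split into blocks of length about $r_1/H$ so that the perturbation $j\delta$ stays below $1/r_1$, then exploit the $1/r_1$-spacing of the points $m_0\beta + ju_1/r_1$ modulo $1$; for \eqref{spacingbound2} you both observe $\|m\beta\| \geq \tfrac12\|mu_1/r_1\|$ and sum over distinct residues. The only cosmetic difference is that the paper phrases the block estimate via ``the inequality $\|(m_0+j)\beta\| \geq \tfrac12\|m_0\beta+ju_1/r_1\|$ fails for only $O(1)$ values of $j$'' rather than your direct count of points in intervals of width $1/r_1$, but the underlying mechanism is identical.
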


\begin{proof}
For \eqref{spacingbound}, it suffices to show that a block of $[r_1/H]$ consecutive $m$'s contribute
\[ \ll R + \sum_{l=1}^{r_1} \frac{r_1}{l} . \]
Writing $m = m_0 +j$, $1 \leq j \leq [r_1/H]$,
\[ \left| (m_0+j)\beta - m_0 \beta - \frac{ju_1}{r_1} \right| \leq \frac{jH}{r_1^2} \leq \frac{1}{r_1} , \]
so there are $O(1)$ values of $j$ for which the bound
\[ \| (m_0 + j ) \beta \| \geq \frac{1}{2} \left\| m_0 \beta + \frac{ju_1}{r_1} \right\| \]
fails.  Our block estimate follows immediately. \newline

The argument for \eqref{spacingbound2} is similar.  In this case, 
\[ \left| m\beta - \frac{mu_1}{r_1} \right| \leq \frac{1}{2r_1} , \]
if $ 1 \leq m \leq M$.  Therefore, the left-hand side of \eqref{spacingbound2} can be estimated by $\sum_{l=1}^{r_1} r_1/l$.
\end{proof}

\begin{lemma} \label{doubleaddmultsum}
Let $N < N' \leq 2N$, $MK \asymp N$, $N \geq K \geq M \geq 1$.  Suppose that
\begin{equation} \label{modgammacond}
\left| \gamma - \frac{u}{r} \right| \leq \frac{H}{r^2} , \; (u,r)=1, \; H \leq r \leq N .
\end{equation}
Let $(a_m)_{M \leq m < 2M}$, $(b_k)_{K \leq k < 2K}$ be two sequences of complex numbers.  Then
\begin{equation} \label{Ssumdef}
S : = \sum_{Q \leq q < 2Q} \sum_{\chi \bmod{q}} \left| \mathop{\sum_{M \leq m < 2M} \sum_{K \leq k < 2K}}_{N \leq mk < N'}  a_m b_k \chi (mk) e (\gamma mk) \right|
\end{equation}
satisfies the bound
\[ S \ll \| a \|_2 \| b \|_2 \logl^{3/2} D^{1/2} \left( Q^2 M^{1/2} + \frac{Q^{3/2} H^{1/2} N^{1/2}}{r^{1/2}} + Q^{3/2} H^{1/2} K^{1/2} + Q^{3/2} r^{1/2} \right) , \]
where 
\[ D = \max_{n < N} \# \{ q \in [Q, 2Q) : n =lq \}. \]
\end{lemma}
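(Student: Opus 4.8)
The plan is to strip the modulus signs with unimodular weights, apply Cauchy--Schwarz so as to extract the shorter coefficient vector $(a_m)$, expand the resulting square and execute the sum over $\chi \bmod q$ by orthogonality, and then treat the diagonal and off-diagonal terms separately. The additive twist $e(\gamma mk)$ plays no role in the diagonal; in the off-diagonal it is exploited by extracting cancellation from a linear exponential sum in $m$ and invoking the spacing estimate \eqref{spacingbound}, which is precisely where the Diophantine hypothesis \eqref{modgammacond} (hence the quantities $r$, $H$) enters. Concretely, choose $c_{q,\chi}$ with $|c_{q,\chi}|=1$ so that the modulus in \eqref{Ssumdef} disappears, write $\chi(mk)=\chi(m)\chi(k)$, pull the sum over $m$ outside, and apply Cauchy--Schwarz:
\[
S^{2}\le\|a\|_{2}^{2}\sum_{M\le m<2M}\Bigl|\,\sum_{Q\le q<2Q}\ \sum_{\chi\bmod q}c_{q,\chi}\chi(m)\,\Sigma_{m}(q,\chi)\Bigr|^{2},\qquad
\Sigma_{m}(q,\chi)=\sum_{\substack{K\le k<2K\\ N\le mk<N'}}b_{k}\chi(k)e(\gamma mk).
\]
Since the number of pairs $(q,\chi)$ with $Q\le q<2Q$ is $\sum_{Q\le q<2Q}\varphi(q)\ll Q^{2}$ and $|c_{q,\chi}\chi(m)|\le1$, a further Cauchy--Schwarz gives
\[
S^{2}\ll\|a\|_{2}^{2}\,Q^{2}\,U,\qquad U:=\sum_{Q\le q<2Q}\ \sum_{\chi\bmod q}\ \sum_{M\le m<2M}|\Sigma_{m}(q,\chi)|^{2}.
\]

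To estimate $U$, expand $|\Sigma_{m}(q,\chi)|^{2}$ over $k_{1},k_{2}$ and carry out the sum over $\chi\bmod q$: by orthogonality $\sum_{\chi\bmod q}\chi(mk_{1})\overline{\chi(mk_{2})}$ equals $\varphi(q)$ when $(mk_{1}k_{2},q)=1$ and $mk_{1}\equiv mk_{2}\pmod q$, and vanishes otherwise. The diagonal $k_{1}=k_{2}$ contributes
\[
\ll\sum_{Q\le q<2Q}\varphi(q)\sum_{K\le k<2K}|b_{k}|^{2}\,\#\{m\in[M,2M):N\le mk<N'\}\ll Q^{2}M\|b\|_{2}^{2},
\]
using the trivial bound $\#\{m\in[M,2M):N\le mk<N'\}\le M$; carried back through $S^{2}\ll\|a\|_{2}^{2}Q^{2}U$, this yields the term $\|a\|_2\|b\|_2 Q^{2}M^{1/2}$, and it is here that we profit from having applied Cauchy--Schwarz to the variable of the shorter range, $M\le K$.

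For the off-diagonal $k_{1}\ne k_{2}$, set $\ell=k_{1}-k_{2}$, so $0<|\ell|<2K$. Because $(m,q)=1$, the congruence $mk_{1}\equiv mk_{2}\pmod q$ forces $q\mid\ell$, so only $\ll D$ moduli $q\in[Q,2Q)$ survive for each $\ell$, each with $\varphi(q)\le2Q$. Removing the condition $(m,q)=1$ by M\"obius inversion, the remaining $m$-sum runs $e(\gamma m\ell)$ over an arithmetic progression inside a subinterval of $[M,2M)$, hence contributes $\ll\min(M,\|\gamma\ell\|^{-1})$ up to divisor-function factors; using also $\sum_{k_{1}}|b_{k_{1}}||b_{k_{1}-\ell}|\le\|b\|_{2}^{2}$, the off-diagonal part of $U$ is
\[
\ll\logl^{O(1)}\,Q\,D\,\|b\|_{2}^{2}\sum_{0<|\ell|<2K}\min\!\Bigl(M,\tfrac1{\|\gamma\ell\|}\Bigr).
\]
By \eqref{spacingbound}, applicable with $\beta=\gamma$ and $r_{1}=r$ since $H\le r$ by \eqref{modgammacond},
\[
\sum_{0<|\ell|<2K}\min\!\Bigl(M,\tfrac1{\|\gamma\ell\|}\Bigr)\ll\Bigl(\tfrac{HK}{r}+1\Bigr)(M+r\log r)\ll\frac{HKM}{r}+HK\logl+M+r\logl,
\]
and since $MK\asymp N$ this off-diagonal part of $U$ is $\ll\logl^{O(1)}QD\|b\|_{2}^{2}(HN/r+HK+M+r)$. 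Combining the two parts of $U$, inserting into $S^{2}\ll\|a\|_{2}^{2}Q^{2}U$, and taking square roots gives
\[
S\ll\|a\|_{2}\|b\|_{2}\logl^{O(1)}\Bigl(Q^{2}M^{1/2}+Q^{3/2}D^{1/2}\bigl(H^{1/2}N^{1/2}r^{-1/2}+H^{1/2}K^{1/2}+M^{1/2}+r^{1/2}\bigr)\Bigr),
\]
in which $Q^{3/2}D^{1/2}M^{1/2}$ is absorbed into $Q^{2}D^{1/2}M^{1/2}$; tracking the inserted logarithms and the $D^{1/2}$ shows they can be taken to be at most $\logl^{3/2}D^{1/2}$, which is the asserted bound.

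I expect the off-diagonal analysis to be the main obstacle. One must genuinely exploit the cancellation in $\sum_{m}e(\gamma m\ell)$ — bounding it trivially by $M$ would cost a factor $QN^{1/2}$ that none of the stated terms can absorb — correctly match the admissible moduli $q\mid\ell$ to the counting quantity $D$, and dispose of the coprimality conditions $(mk_{1}k_{2},q)=1$ without damage; the last point forces one to apply \eqref{spacingbound} to progression-restricted sums, and this is where essentially all of the logarithmic losses accrue. By contrast, the diagonal term and the Cauchy--Schwarz bookkeeping are routine once one decides to retain $M^{1/2}$ rather than $K^{1/2}$.
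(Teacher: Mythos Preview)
Your approach is correct and essentially the same as the paper's: Cauchy--Schwarz to extract $\|a\|_2$, orthogonality in $\chi$ to force $q\mid k_1-k_2$, the geometric-sum bound $\min(M,\|\gamma\ell\|^{-1})$ on the $m$-sum, and then the spacing estimate \eqref{spacingbound} after gluing $l$ and $q$ (which is where $D$ enters). The only cosmetic differences are that the paper first removes the constraint $N\le mk<N'$ at the cost of one $\logl$ and applies Cauchy separately to each $S_q$ before a final Cauchy over $q$, and that your worry about a condition $(m,q)=1$ on the $m$-sum is unfounded (after your second Cauchy the factor $\chi(m)$ is gone, so orthogonality only imposes $(k_1k_2,q)=1$ and the $m$-sum is a plain geometric sum over an interval---no M\"obius is needed).
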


\begin{proof}
Let $S'$ be the sum obtained from $S$ by removing the condition $N \leq mk <N'$.  It suffices to prove the same bound, with $\logl^{1/2}$ in place of $\logl^{3/2}$, for $S'$, since the condition can be restored at the cost of a factor of $\logl$.  See \cite[Section 3.2]{GH}. \newline

We have
\[ S' \leq \sum_{Q \leq q < 2Q} \sum_{\chi \bmod{q}} \sum_{M \leq m < 2M} |a_m| \left| \sum_{K \leq k < 2K} b_k \chi(k) e(\gamma mk) \right| = \sum_q S_q, \]
say.  We may also assume that $b_k = 0$ if $(k,q)>1$.  By Cauchy's inequality, and with summations subject to the obvious restrictions on $m$, $k_1$ and $k_2$,
\[ S_q^2 \leq \varphi(q) \| a \|_2^2 \sum_{\chi \bmod{q}} \sum_m \sum_{k_1} \sum_{k_2} b_{k_1} \overline{b}_{k_2} \chi (k_1) \overline{\chi} (k_2) e ( \gamma m (k_1-k_2) ) . \]
Bringing the sum over $\chi$ inside we see that the right-hand side of the above is
\[ \varphi(q)^2 \| a \|_2^2 \sum_{\substack{k_1, k_2 \\ k_1 \equiv k_2 \bmod{q}}} b_{k_1} \overline{b}_{k_2} \sum_m e ( \gamma m (k_1-k_2) ) \leq \varphi(q)^2 \| a \|_2^2 \sum_{k_1} |b_{k_1}|^2 \sum_{k_1 \equiv k_2 \bmod{q}} \left|  \sum_m e ( \gamma m (k_1-k_2) ) \right| \]
upon using the parallelogram rule
\[ \left| b_{k_1} b_{k_2} \right| \leq \frac{1}{2} \left( \left| b_{k_1} \right|^2 + \left| b_{k_2} \right|^2 \right) . \]
Now summing the geometric sum over $m$ and then summing over $q$, we see that
\begin{equation} \label{aveSqsq}
\sum_{Q \leq q < 2Q} S_q^2 \ll  Q^3 \| a \|_2^2 \| b \|_2^2  M + Q^2 \| a \|_2^2 \| b \|_2^2 \sum_{Q \leq q < 2Q} \sum_{1 \leq l < K/q} \min \left( M, \frac{1}{\| \gamma l q \|} \right) .
\end{equation}
Now we combine the variables $l$ and $q$ and then apply \eqref{spacingbound}, leading to
\begin{equation*}
\begin{split}
 \sum_{Q \leq q < 2Q} S_q^2 & \ll  Q^3 \| a \|_2^2 \| b \|_2^2  M + Q^2 \| a \|_2^2 \| b \|_2^2 D \left( \frac{HK}{r} + 1 \right) \left( M + r \log r \right) \\
 & \ll \| a \|_2^2 \| b \|_2^2  \left( Q^3 M + \logl Q^2 D \left( \frac{HN}{r}+ HK +M +r \right) \right). 
 \end{split}
 \end{equation*}
The desired bound for $S'$ follows by another application of Cauchy's inequality.
\end{proof}

\begin{lemma} \label{Ssumestnew}
Under the hypotheses of Lemma~\ref{doubleaddmultsum}, suppose that $4MQ <N$, $b_k =1$ for $K \leq k < 2K$ and $|a_m| \leq 1$ for $M \leq m < 2M$.  Define $D$ as in Lemma~\ref{doubleaddmultsum}.  Then
\begin{enumerate}[(i)]
\item \label{doubleaddmultsum2} We have
\[ S \ll Q^{3/2} \logl D \left( \frac{QMH}{r} + 1 \right) \left( \frac{K}{Q} + r \right) . \]
\item \label{doubleaddmultsum3} If $4MQ < r$ and
\[ 4MQ \left| \gamma - \frac{u}{r} \right| \leq \frac{1}{2r} , \]
then
\[ S \ll \logl D Q^{3/2} r . \]
\end{enumerate}
\end{lemma}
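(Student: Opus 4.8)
The plan is to handle both assertions at once, diverging from the proof of Lemma~\ref{doubleaddmultsum} at the very first step. Since $b_k=1$ and $|a_m|\le 1$, I will \emph{not} apply Cauchy--Schwarz to the sum over $m$; instead I keep that sum outside with only the trivial bound $|a_m|\le1$, and spend Cauchy--Schwarz on the character sum. Writing $\chi(mk)=\chi(m)\chi(k)$ and using $|a_m\chi(m)|\le1$, the triangle inequality gives
\[
 S\le\sum_{Q\le q<2Q}\ \sum_{\chi\bmod q}\ \sum_{M\le m<2M}\bigl|T_{m,\chi}\bigr|,\qquad T_{m,\chi}:=\sum_{k\in J_m}\chi(k)\,e(\gamma mk),
\]
where $J_m:=\{k:K\le k<2K,\ N\le mk<N'\}$ is an interval containing at most $K$ integers.

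The main step is to bound $\sum_{\chi\bmod q}|T_{m,\chi}|$ for fixed $q$ and $m$. By Cauchy--Schwarz over the $\varphi(q)$ characters together with orthogonality,
\[
 \sum_{\chi\bmod q}|T_{m,\chi}|^2=\varphi(q)\sum_{\substack{a\bmod q\\(a,q)=1}}\ \Bigl|\sum_{\substack{k\in J_m\\ k\equiv a\bmod q}}e(\gamma mk)\Bigr|^2 .
\]
For each $a$, the inner sum runs over at most $K/q+1$ terms of a geometric progression with common ratio $e(\gamma mq)$, hence is $\ll\min(K/q+1,\ \|\gamma mq\|^{-1})$; summing over the $\varphi(q)$ admissible residues $a$ and extracting a square root,
\[
 \sum_{\chi\bmod q}|T_{m,\chi}|\ll\varphi(q)^{3/2}\min\Bigl(\frac{K}{q}+1,\ \frac1{\|\gamma mq\|}\Bigr)\ll Q^{3/2}\min\Bigl(\frac{K}{Q}+1,\ \frac1{\|\gamma mq\|}\Bigr).
\]
The crucial point (and the only genuine obstacle I anticipate) is that this produces $Q^{3/2}$ rather than $Q^{2}$: since $b_k$ is constant, the $k$-sum is, for fixed $m$, a true exponential sum whose cancellation survives the orthogonality relation, so the ``diagonal'' $k_1\equiv k_2\bmod q$ still carries the factor $\min(K/q+1,\|\gamma mq\|^{-1})$. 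Had one instead applied Cauchy--Schwarz to the $m$-sum as in Lemma~\ref{doubleaddmultsum}, the diagonal $k_1=k_2$ would have contributed a term of size $\gg Q^2MK^{1/2}$, too large for the present estimate.

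It remains to sum over $m$ and $q$ and combine the variables. Inserting the last bound and writing $n=mq$ (which ranges over positive integers below $4MQ<N$, each value arising at most $D$ times by the definition of $D$), I obtain
\[
 S\ll Q^{3/2}D\sum_{0<n<4MQ}\min\Bigl(\frac{K}{Q}+1,\ \frac1{\|\gamma n\|}\Bigr).
\]
For the first assertion I apply \eqref{spacingbound} with $\beta=\gamma$, $r_1=r$, summation range $0<n<4MQ$, and cap $R=K/Q+1$; condition \eqref{modgammacond} supplies the hypotheses $H\le r$ and $|\gamma-u/r|\le H/r^2$, and after absorbing $r\log r$ into $r\logl$ this yields $S\ll\logl\,D\,Q^{3/2}(QMH/r+1)(K/Q+r)$. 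For the second assertion I discard the cap and invoke \eqref{spacingbound2} with the same summation range and $r_1=r$, whose two hypotheses are exactly $4MQ<r$ and $4MQ\,|\gamma-u/r|\le1/(2r)$; it gives $\sum_{0<n<4MQ}\|\gamma n\|^{-1}\ll r\log2r\ll r\logl$, and hence $S\ll\logl\,D\,Q^{3/2}r$.
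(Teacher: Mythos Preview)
Your argument is correct, and it reaches the same reciprocal sum $\sum_{n<4MQ}\min(K/Q,\|\gamma n\|^{-1})$ that the paper does, but by a genuinely different route. The paper bounds the character sum trivially, writing $S\le QS^*+S^{**}$ where $S^*$ carries a single worst nonprincipal character $\chi_q$ per modulus and $S^{**}$ carries the principal character; it then evaluates each $\sum_{k\in I_m}\chi_q(k)e(\gamma mk)$ by the completion machinery of Lemma~\ref{BaBalem} together with the Gauss-sum bound $|S(\chi,q)|\le q^{1/2}$, picking up an additive error $MQ^{3/2}\logl$ before arriving at the same spacing sum. Your approach replaces this by Cauchy--Schwarz over $\chi$ plus orthogonality, which collapses the character average to residue-class geometric series and delivers the factor $Q^{3/2}$ directly, without any completion step or appeal to Lemma~\ref{BaBalem}. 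This is more elementary and slightly cleaner here; the paper's route, on the other hand, isolates a pointwise bound for a single twisted character sum that could be reused independently. One cosmetic point: your combined variable $n=mq$ actually lies in $[MQ,4MQ)$, but enlarging the range to $[1,4MQ)$ only helps when invoking \eqref{spacingbound2}, so no harm is done.
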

\begin{proof}
Let $I_m$ (here and after) denote a subinterval of $[N/m, N'/m)$.  We have
\[ S \leq QS^* + S^{**} , \]
where, for a suitably chosen nonprincipal $\chi_q \pmod{q}$,
\[ S^* = \sum_{Q \leq q < 2Q} \sum_{M \leq m < 2M} \left| \sum_{k \in I_m} \chi_q(k) e (\gamma mk) \right| \]
and
\[ S^{**} = \sum_{Q \leq q < 2Q} \sum_{M \leq m < 2M} \left| \sum_{k \in I_m} \chi_0 (k) e (\gamma m k) \right| . \]
To prove part \eqref{doubleaddmultsum2}, it suffices to show that
\[ S^* \ll Q^{1/2} \logl D \left( \frac{QMH}{r} + 1 \right)  \left( \frac{K}{Q} + r \right) \]
and
\[ S^{**} \ll Q \logl D \left( \frac{QMH}{r} + 1 \right) \left( \frac{K}{Q} + 1 \right) . \]
We give the proof for $S^*$; the proof for $S^{**}$ is similar. \newline

Given $q$ and $m$, Lemma~\ref{BaBalem} together with, using the notation from Lemma~\ref{BaBalem},
\[ \left| S (\chi, q) \right| \leq \sqrt{q} \]
(see Chapter 9 of \cite{HD}) gives
\begin{equation*}
\begin{split}
\sum_{k \in I_m} \chi_q(k) e (\gamma mk) - \frac{1}{q} \sum_{1 \leq |n| \ll Mq} S(\chi_q, n) & \int\limits_{I_m} e \left( \left( \frac{n}{q} + \gamma m \right) y \right) \dif y \\
& \ll q^{-1/2} M^{-1} + q^{1/2} \sum_{1 \leq n \ll Mq} n^{-1} \ll q^{1/2} \logl .
\end{split}
\end{equation*}
Therefore
\[ \sum_{k \in I_m} \chi_q(k) e (\gamma mk) \ll q^{1/2} \logl + q^{-1/2} \sum_{1 \leq |n| \ll Mq} \min \left( K, \frac{1}{\left| \gamma m - \frac{n}{q} \right|} \right). \]
Summing over $m$ and $q$,
\[ S^* \ll MQ^{3/2} \logl + Q^{1/2} \sum_{Q \leq q < 2Q} \sum_{M \leq m < 2M} \sum_{1 \leq |n| \ll Mq}  \min \left( \frac{K}{Q}, \frac{1}{\left| \gamma mq - n \right|} \right). \]
The contribution to the right-hand side of the above from $n$'s with $|n-\gamma mq| > 1/2$ is
\[ \ll MQ^{3/2} \logl . \]
Now combining the variables $m$, $q$,
\begin{equation} \label{S*est}
S^* \ll MQ^{3/2} \logl + Q^{1/2}D \sum_{MQ \leq m' < 4MQ} \min \left( \frac{K}{Q}, \frac{1}{\| \gamma m' \|} \right) .
\end{equation}
We can now deduce the desired bound for $S^*$ by applying \eqref{spacingbound}. \newline

Now for part \eqref{doubleaddmultsum3}, we note that \eqref{spacingbound2} is applicable to the reciprocal sum in \eqref{S*est} with $4MQ$ and $\gamma$ in place of $M$ and $\beta$.  Hence
\[ S^* \ll MQ^{3/2} \logl + Q^{1/2} D r \log 2r \ll D \logl Q^{1/2} r  \]
since $4MQ < r$.  Similarly $S^{**} \ll D \logl Q r$, and part \eqref{doubleaddmultsum3} follows.
\end{proof}

\begin{lemma} \label{SsumtoW}
Suppose that
\[ \left| \gamma - \frac{u}{r} \right| \leq \frac{\logl^{A+1}}{r^2} \]
with $(u,r)=1$ and that $r^2 \leq N \leq r^2 \logl^{2A+2}$.  Then
\begin{enumerate}[(i)]
\item \label{Ssumesta} For $Q < N^{2/7-\varepsilon}$, $N^{4/7} \ll K \ll N^{5/7}$ and any $a_m$, $b_k$ with $|a_m| \leq \tau(m)^B$, $|b_k| \leq \tau(k)^B$, where $B$ is an absolute constant, the sum $S$ in \eqref{Ssumdef} satisfies the bound
\begin{equation} \label{Sest1}
S \ll QN^{1-\varepsilon/4}.
\end{equation}
\item \label{Ssumestb} For $Q \leq N^{2/7-\varepsilon}$, $M \ll N^{4/7}$ and $b_k =1$ for $K \leq k < 2K$, $|a_m| \leq 1$ for $M \leq m < 2M$, the sum $S$ in \eqref{Ssumdef} satisfies \eqref{Sest1}.
\end{enumerate}
\end{lemma}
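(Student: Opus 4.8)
The plan is to deduce both parts from the bilinear estimates already in hand --- Lemma~\ref{doubleaddmultsum} for the genuinely bilinear sum of part~(i), and Lemma~\ref{Ssumestnew} for the Type~I sum of part~(ii) --- taking in each case $H=\logl^{A+1}$. The hypothesis $r^{2}\le N\le r^{2}\logl^{2A+2}$ forces $N^{1/2}\logl^{-(A+1)}\le r\le N^{1/2}$, i.e. $r=N^{1/2+o(1)}$, so in particular $H\le r\le N$ and, together with the running assumptions $MK\asymp N$ and $N\ge K\ge M\ge1$, the hypotheses of those lemmas are met. We shall freely use the crude bounds $D\le\max_{n<N}\tau(n)=N^{o(1)}$ and, for coefficients bounded by $\tau(\cdot)^{B}$, $\|a\|_{2}\|b\|_{2}\ll(MK)^{1/2}\logl^{O(1)}\ll N^{1/2+o(1)}$; since $Q\le N^{2/7-\varepsilon}$ (so $Q^{1/2}\le N^{1/7-\varepsilon/2}$) throughout, every estimate below reduces to a harmless comparison of exponents of $N$.

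For part~(i) I would apply Lemma~\ref{doubleaddmultsum} verbatim, getting
\[ S\ll N^{1/2+o(1)}\left(Q^{2}M^{1/2}+\frac{Q^{3/2}H^{1/2}N^{1/2}}{r^{1/2}}+Q^{3/2}H^{1/2}K^{1/2}+Q^{3/2}r^{1/2}\right). \]
Inserting $N^{4/7}\ll K\ll N^{5/7}$ (hence $M\asymp N/K\ll N^{3/7}$), $H=N^{o(1)}$, $r=N^{1/2+o(1)}$ and $Q\le N^{2/7-\varepsilon}$, the four terms are respectively $\ll QN^{1-\varepsilon+o(1)}$, $\ll QN^{25/28-\varepsilon/2+o(1)}$, $\ll QN^{1-\varepsilon/2+o(1)}$ and $\ll QN^{25/28-\varepsilon/2+o(1)}$, all of which are $\ll QN^{1-\varepsilon/4}$. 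This is a routine exponent check, the binding terms being the first and third.

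For part~(ii), with $b_{k}=1$ and $|a_{m}|\le1$, I would invoke Lemma~\ref{Ssumestnew} (its hypothesis $4MQ<N$ holds since $4MQ\ll N^{6/7-\varepsilon}$), splitting on the size of $4MQ$ relative to $r/(2\logl^{A+1})$. If $4MQ\le r/(2\logl^{A+1})$ then $4MQ\,|\gamma-u/r|\le 4MQ\logl^{A+1}r^{-2}\le (2r)^{-1}$, so part~(ii) of Lemma~\ref{Ssumestnew} applies and gives $S\ll\logl DQ^{3/2}r\ll Q^{3/2}N^{1/2+o(1)}\ll QN^{1-\varepsilon/4}$. If instead $4MQ>r/(2\logl^{A+1})$ then $M\gg r/(Q\logl^{A+1})$, so (as $r=N^{1/2+o(1)}$) the conjugate length satisfies $K\asymp N/M\ll QN^{1/2+o(1)}$; part~(i) of Lemma~\ref{Ssumestnew} with $H=\logl^{A+1}$ then gives
\[ S\ll Q^{3/2}\logl D\left(\frac{QMH}{r}+1\right)\left(\frac{K}{Q}+r\right)\ll Q^{3/2}N^{6/7+o(1)}\ll QN^{1-\varepsilon/4}, \]
where we used $QMH/r\ll N^{2/7+4/7-1/2+o(1)}=N^{5/14+o(1)}$ and $K/Q+r\ll N^{1/2+o(1)}$. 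The two cases together give part~(ii).

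The main obstacle --- essentially the only point needing thought --- is the case split in part~(ii): neither bound of Lemma~\ref{Ssumestnew} covers the full range $1\le Q\le N^{2/7-\varepsilon}$, $1\le M\ll N^{4/7}$, because its part~(ii) requires $4MQ$ small against $r\asymp N^{1/2}$ whereas its part~(i) is wasteful when $K=N/M$ is large; the resolution is that these two bad regimes are complementary, so in the range where $4MQ$ is too large for part~(ii) the variable $M$, hence $K$, is automatically controlled well enough for part~(i) to win. One must also check that the weak approximation $|\gamma-u/r|\le\logl^{A+1}r^{-2}$ survives multiplication by $4MQ$ up to the chosen threshold, which is precisely why that threshold carries the factor $2\logl^{A+1}$.
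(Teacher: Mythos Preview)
Your argument is correct and follows essentially the paper's route: for (i) both apply Lemma~\ref{doubleaddmultsum} and check exponents, and for (ii) both invoke the two parts of Lemma~\ref{Ssumestnew}. The only minor difference is the case division in (ii): the paper splits at $K=N^{1-\varepsilon}$ (equivalently $M\ll N^{\varepsilon}$), whereas you split at $4MQ\asymp r/\logl^{A+1}$; either threshold works, yours being tailored to the exact hypothesis of Lemma~\ref{Ssumestnew}\,(ii) while the paper's is $Q$-independent and slightly simpler to state.
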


\begin{proof}
In order to prove \eqref{Ssumesta}, we use Lemma~\ref{doubleaddmultsum}.  As $D \ll N^{\varepsilon/15}$,
\[ SQ^{-1} N^{-1+\varepsilon/4} \ll Q^{-1} N^{-1/2+ \varepsilon/3} \left( Q^2 N^{3/14} + Q^{3/2} N^{5/14} \right) \ll N^{-1/2+\varepsilon/2} \left( QN^{3/14} + Q^{1/2} N^{5/14} \right) \ll 1. \]

To prove \eqref{Ssumestb}, we break the situation into two cases.  If $K < N^{1-\varepsilon}$, then by \eqref{doubleaddmultsum2} of Lemma~\ref{Ssumestnew}
\[ SQ^{-1} N^{-1+\varepsilon/4} \ll Q^{1/2} N^{-1+\varepsilon/2} \left( N^{1/2} + MQ + \frac{N^{1-\varepsilon}}{Q} \right) \ll N^{1/7-1/2+\varepsilon} + N^{3/7+4/7-1-\varepsilon} + N^{-\varepsilon/2} \ll 1. \]
If $K \geq N^{1-\varepsilon}$, then $M \ll N^{\varepsilon}$ and \eqref{doubleaddmultsum3} of Lemma~\ref{Ssumestnew} is applicable since
\[ 4MQ \left| \gamma - \frac{u}{r} \right| \ll N^{-1+2/7+\varepsilon} . \]
Hence
\[ SQ^{-1} N^{-1+\varepsilon/4} \ll Q^{1/2} N^{-1/2+\varepsilon} \ll 1 , \]
giving the desired majorant.
\end{proof}

\begin{lemma} \label{hdid}
Let $f$ be an arbitrary complex function on $[N,2N)$.  Let $N < N' \leq 2N$.  The sum
\[ S = \sum_{N \leq n < N'} \Lambda(n) f(n) \]
can be decomposed into $O ( \logl^2 )$ sums of the form
\[ \sum_{M<m \leq 2M} a_m \sum_{\substack{K \leq k < 2K \\ N \leq mk < N'}} f(mk) \; \; \; \;\mbox{or} \; \; \; \; \int\limits_N^{N'} \sum_{M \leq m < 2M} a_m \sum_{\substack{k \geq w \\ K \leq k < 2K \\ N \leq mk < N'}} f(mk) \frac{\dif w}{w} \]
with $M \leq N^{1/4}$ and $|a_m| \leq 1$, together with $O (\logl)$ sums of the form
\[  \sum_{M<m \leq 2M} a_m \sum_{\substack{K \leq k < 2K \\ N \leq mk < N'}} b_k f(mk) \]
with $N^{1/2} \leq K \ll N^{3/4}$ and $\| a \|_2 \| b \|_2 \ll N^{1/2} \logl^2$.
\end{lemma}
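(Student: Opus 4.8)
The statement is a combinatorial (Heath--Brown / Vaughan) decomposition of a weighted von Mangoldt sum into ``Type~I'' and ``Type~II'' pieces, and I would prove it along the classical lines. Set $z=N^{1/4}$ and fix an integer $\kappa$ with $z^{\kappa}\ge 2N$ (so $\kappa\ge 5$ suffices for large $N$). Every $n\in[N,2N)$ satisfies $n>z$, so Heath--Brown's identity gives
\[
\Lambda(n)=\sum_{j=1}^{\kappa}(-1)^{j-1}\binom{\kappa}{j}\sum_{\substack{n=m_1\cdots m_j\,l_1\cdots l_j\\ m_1,\dots,m_j\le z}}\mu(m_1)\cdots\mu(m_j)\log l_1 .
\]
Since $\binom{\kappa}{j}=O(1)$, it is enough to treat, for each $j\le\kappa$, the sum obtained by inserting a single inner sum into $S$. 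First I would split each of the $2j-1$ variables $m_1,\dots,m_j,l_2,\dots,l_j$ into a dyadic range (the size of $l_1$ is then pinned down to within a factor $2$ by $\prod m_i\prod l_i\asymp N$), and record the weight $\log l_1$ as $\int_1^{l_1}\mathrm dw/w$; interchanging the sum and integral and cutting the $w$-integral at the endpoints of the range of $l_1$ replaces $\log l_1$ either by a bounded integral weight of the shape occurring in the second form of a Type~I sum, or by a standalone constant of size $O(\logl)$, which I write as a sum of $O(\logl)$ unit weights.

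The heart of the argument is sorting each dyadically localized piece into Type~I or Type~II. Let $M_1,\dots,M_j\le z=N^{1/4}$ and $L_1,\dots,L_j$ be the block sizes, with $\prod M_i\prod L_i\asymp N$. If one of the $L_i$ exceeds $N^{3/4}$, I take $k$ to be that variable and $m$ the product of all the others, so that $M\asymp N/L_i<N^{1/4}$; crucially, since $m\le z$ the truncations $m_r\le z$ are vacuous on the $m$-block, so the $\mu$- and $1$-factors collapse via $\mu^{*j}*1^{*(j-1)}=\mu$ and the coefficient of $m$ is simply $\mu(m)$, with $|\mu(m)|\le 1$ --- this yields a Type~I sum (first form, or second form if $l_1$ itself is the large variable and the $\log$ must be carried by the integral). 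If instead every $L_i\le N^{3/4}$, then since among the $2j$ blocks there are always $j\ge 1$ ``short'' ones of size $\le N^{1/4}$, a standard accumulation argument (sort the block sizes, multiply them one at a time, using that each short factor is $\le N^{1/4}$ and peeling off any single medium factor first) produces a sub-product $k$ lying in $[N^{1/2},N^{3/4}]$, with complementary block $m$ of size $\asymp N/K\in[N^{1/4},N^{1/2}]$. The coefficients $a_m,b_k$ are then Dirichlet convolutions of $\mu(\cdot)$'s and of $\log l_1$, so $|a_m|\le\tau_{O(1)}(m)\log 2m$ and $|b_k|\le\tau_{O(1)}(k)\log 2k$; the divisor moment bound $\sum_{n\le x}\tau_r(n)^2\ll x\logl^{O_r(1)}$ gives $\|a\|_2\|b\|_2\ll (MK)^{1/2}\logl^{O(1)}\ll N^{1/2}\logl^{O(1)}$, and keeping track of which factor carries the $\log$ (place it on the longer block, use $|\mu|\le 1$ on the shorter) sharpens this to $N^{1/2}\logl^2$. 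Throughout, the condition $N\le mk<N'$ is simply carried along.

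For the bookkeeping: a Type~I piece is determined by the index $j$ ($O(1)$ choices), the dyadic position of its single short block ($O(\logl)$ choices), and the discretization of the $O(\logl)$-sized $\log$-weight into unit pieces ($O(\logl)$ choices), giving $O(\logl^2)$ Type~I sums with $M\le N^{1/4}$ and $|a_m|\le 1$; a Type~II piece is determined by $j$ and the dyadic position of the block $k\in[N^{1/2},N^{3/4}]$ (the rest of the coefficient data being absorbed into $a_m,b_k$), giving $O(\logl)$ Type~II sums with the stated $\ell^2$ bound. One could equally run the argument from Vaughan's identity with $U=V=N^{1/4}$, which makes the $j=1$ (Type~I) and $j=2,3$ (Type~II) contributions transparent.

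The main obstacle is precisely the case analysis and counting in the sorting step: verifying that the truncation $m_i\le N^{1/4}$ always forces a partial product into $[N^{1/2},N^{3/4}]$ in every configuration that is not already Type~I (the awkward configurations being those with two or more ``medium'' variables $l_i\in(N^{1/4},N^{1/2})$), that a short block automatically inherits the coefficient $\mu(m)$ through the collapse $\mu^{*j}*1^{*(j-1)}=\mu$ so that $|a_m|\le 1$, and that the whole procedure can be organized so that the number of pieces is only $O(\logl^2)$ and $O(\logl)$ and the Type~II norm never exceeds $N^{1/2}\logl^2$. The analytic inputs --- the divisor moment estimates and the integral representation of $\log$ --- are routine.
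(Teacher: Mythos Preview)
The paper's own proof is precisely your closing remark: it cites Chapter~24 of Davenport with $U=V=N^{1/4}$ (Vaughan's identity) and says nothing further. Your primary route via Heath--Brown's identity is a genuinely different decomposition, and while the architecture---dyadic localisation, sorting by the size of the largest free variable, accumulating partial products into $[N^{1/2},N^{3/4}]$---is sound, it does not deliver the lemma with the \emph{stated} exponents. The obstruction is the Type~II bound $\|a\|_2\|b\|_2\ll N^{1/2}\logl^{2}$: once several of the $2j$ variables must be amalgamated into $k$ (or $m$), the resulting coefficient is an $r$-fold divisor sum with $r$ as large as $2\kappa-1$, so that $|b_k|\le\tau_r(k)$ and the second moment $\sum_{k\sim K}\tau_r(k)^2\asymp K(\log K)^{r^2-1}$ forces $\|a\|_2\|b\|_2\ll N^{1/2}\logl^{C}$ with $C$ depending on $\kappa$, not $C=2$. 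Your claim that ``placing the $\log$ on the longer block and using $|\mu|\le 1$ on the shorter'' recovers $\logl^{2}$ is not correct: the shorter block is still a convolution of several bounded weights and therefore carries a divisor function, not a pointwise bound of~$1$. (A smaller wrinkle: the collapse $\mu^{*j}*1^{*(j-1)}=\mu$ you invoke for Type~I holds only after \emph{re}-summing over all dyadic boxes of the short variables, not within a fixed box as your phrasing suggests.)

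Vaughan with $U=V=N^{1/4}$ sidesteps this because each of its three nontrivial pieces has at most two convolution factors. The Type~I contributions come from $S_3$ (coefficient $\mu(d)$ with $d\le N^{1/4}$, and $\log h$ on the long side handled by the integral form) and from the part of $S_2$ with $t=md\le N^{1/4}$ (coefficient $|c(t)|\le\log t\ll\logl$, which one splits into $O(\logl)$ functions bounded by~$1$); this is where the count $O(\logl^{2})$ arises. The Type~II contributions come from $S_4$, where $a_m=\Lambda(m)$ and $b_k=\sum_{d\mid k,\,d\le V}\mu(d)$ give $\|a\|_2\ll M^{1/2}\logl^{1/2}$ and $\|b\|_2\ll K^{1/2}\logl^{3/2}$ via $|b_k|\le\tau(k)$, together with the part of $S_2$ with $N^{1/4}<t\le N^{1/2}$; both hit exactly $N^{1/2}\logl^{2}$ after a single dyadic split in $t$ or $m$, giving the $O(\logl)$ count. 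The precise constants in the lemma are thus tailored to Vaughan rather than Heath--Brown; your Heath--Brown argument would prove an entirely adequate variant with larger (but fixed) log-powers, which would still suffice for every application in the paper, but not the lemma verbatim.
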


\begin{proof}
This follows from the arguments in \cite[Chapter 24]{HD} by taking $U=V=N^{1/4}$.
\end{proof}

We record a special case of \cite[Lemma 14]{BaWe}.  For more background on the ``Harman sieve", see \cite{GH}.

\begin{lemma} \label{BaWelem}
Let $W(n)$ be a complex function with support in $(N, 2N] \cap \intz$, $|W(n)| \leq N^{1/\varepsilon}$.  For $r \in \natn$, $z \geq 2$, let
\begin{equation}
 S^*(r,z) = \sum_{(n, P(z))=1} W(rn) .
 \end{equation}
 Suppose that for some constant $c>0$, $0 \leq d \leq 1/2$, and for some $Y > 0$, we have, for any coefficients $a_m$, $b_k$ with $|a_m| \leq 1$, $|b_k| \leq \tau(k)$,
 \begin{equation} \label{type1sumest}
 \sum_{m \leq 2N^c} a_m \sum_k W(mk) \ll Y,
 \end{equation}
 and
 \begin{equation} \label{type2sumest}
 \sum_{N^c \leq m \leq 2N^{c+d}} a_m \sum_k b_k W(mk) \ll Y.
 \end{equation}
 Let $u_r$ ($r \leq N^c$) be complex numbers with $|u_r| \leq 1$ and $u_r=0$ for $\left( r, P(N^{\varepsilon}) \right) >1$.  Then
 \[ \sum_{r \leq (2N)^c} u_r S^* \left( r, (2N)^d \right) \ll Y \logl^3 . \]
\end{lemma}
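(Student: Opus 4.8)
The strategy is the ``Harman sieve'': one expands the left–hand side by Buchstab's identity into $O(\logl^{3})$ sums, each matching the shape of hypothesis \eqref{type1sumest} or \eqref{type2sumest} with coefficients obeying $|a_{m}|\le 1$, $|b_{k}|\le\tau(k)$, and then invokes those hypotheses to bound each by $\ll Y$, whence $\ll Y\logl^{3}$ in total. This is the content of \cite[Lemma~14]{BaWe}, specialised to the parameters $c,d$; I indicate the mechanism.

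Write $z=(2N)^{d}$, $\Sigma=\sum_{r\le(2N)^{c}}u_{r}S^{*}(r,z)$, and for an integer $m>1$ let $P^{-}(m)$ denote its least prime factor; the case $z\le 2$ (i.e.\ $d=0$) is trivial since then $(n,P(z))=1$ is vacuous and $\Sigma=\sum_{r}u_{r}\sum_{n}W(rn)$ is itself a Type~I sum, so assume $z>2$. For $r\ge N^{c}$ the term $S^{*}(r,z)=\sum_{(k,P(z))=1}W(rk)$ is already of Type~II shape: since $(2N)^{c}\le 2N^{c+d}$ for large $N$ it fits $\sum_{N^{c}\le m\le 2N^{c+d}}u_{m}\sum_{k}b_{k}W(mk)$ with $b_{k}=X(\{n:(n,P(z))=1\};k)$, an admissible sum, so it contributes $\ll Y$. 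For $r<N^{c}$ apply Buchstab's identity once to the sifted variable at a level $w$ with $2\le w<z$ (take $w=N^{\varepsilon}$ if $N^{\varepsilon}<z$, otherwise $w=2$):
\[
S^{*}(r,z)=S^{*}(r,w)-\sum_{w\le p<z}S^{*}(rp,p),\qquad S^{*}(rp,p)=\sum_{(k,P(p))=1}W(rpk),
\]
and localise the peeled prime $p$ dyadically. The ``main'' term $S^{*}(r,w)$ has explicit variable $r<N^{c}$ and is turned into Type~I sums (directly if $w=2$; if $w=N^{\varepsilon}$, after removing $(k,P(N^{\varepsilon}))=1$ by Möbius and sending the few resulting sub‑sums whose modulus exceeds $2N^{c}$ to Type~II by inserting a divisor that brings the explicit variable into $[N^{c},2N^{c+d})$). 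In $S^{*}(rp,p)$ one has $rp<N^{c}z\le 2N^{c+d}$, so $rp$ cannot leap over the Type~II window: if $rp\ge N^{c}$ the sum is of Type~II with $m=rp$ (the condition $(k,P(p))=1$, whose level $p$ depends on the factorisation of $m$, being split after the dyadic localisation $p\asymp P_{0}$ into the factorisation–free $(k,P(2P_{0}))=1$ plus a remainder in which $P^{-}(k)\in[P_{0},2P_{0})$ is peeled off); if $rp<N^{c}$ one again removes $(k,P(p))=1$ by Möbius and sorts the sub‑sums into Type~I ($rpe\le 2N^{c}$) and Type~II ($rpe>2N^{c}$, after inserting a divisor $e_{1}\mid e$ with $rpe_{1}\in[N^{c},2N^{c+d})$ and moving $e/e_{1}$ and its Möbius factor into $b_{k}$). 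Two bookkeeping facts make this work: the $N^{\varepsilon}$-rough and $N^{\varepsilon}$-smooth parts of an integer are unique, so collecting terms by the explicit variable $m\le 2N^{c+d}$ produces coefficients of size $\ll 1$ (each such $m$ having at most $(c+d)/\varepsilon+1=O(1)$ prime divisors $\ge N^{\varepsilon}$, with the smooth/rough split recoverable), while the inserted smooth divisors and the peeled‑off least prime factors keep $|b_{k}|\le\tau(k)$. Counting the dyadic localisations and the $\logl$ needed to restore each condition $N<mk\le 2N$ (cf.\ \cite{GH}), the whole of $\Sigma$ becomes $O(\logl^{3})$ sums of Type~I or Type~II, each $\ll Y$, and the lemma follows.

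The crux — and where the real work of \cite[Lemma~14]{BaWe} lies — is precisely this classification: arranging the single Buchstab step and Möbius expansions so that at every stage the explicit variable lands in $[1,2N^{c}]\cup[N^{c},2N^{c+d}]$ with $|a_{m}|\ll 1$ and $|b_{k}|\le\tau(k)$ throughout. The Type~II range must reach $N^{d}$ times the Type~I level exactly because, the outer variable $r\le N^{c}$ having consumed the level of distribution, the inner variable can only be sifted bilinearly; disentangling $r$ from the sieve — in particular handling the decomposition–dependent sifting levels on the inner variable without complexifying the coefficients — is the delicate point, and is carried out in full there (see also \cite{GH} for the general Harman sieve).
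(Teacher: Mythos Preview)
The paper's own proof is a single sentence: ``This is \cite[Lemma 14]{BaWe}.'' Your proposal does exactly this --- you identify the result as a special case of \cite[Lemma~14]{BaWe} --- and then go further by sketching the Harman-sieve mechanism behind that lemma. So at the level of what the paper actually provides, you are in complete agreement.

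Your additional sketch of the Buchstab/M\"obius decomposition into Type~I and Type~II sums is a reasonable outline of the argument in \cite{BaWe}, and you are right to flag that the delicate point is keeping the coefficients within $|a_m|\le 1$, $|b_k|\le\tau(k)$ while the sifting level on the inner variable depends on the factorisation of the outer one. As a self-contained proof your write-up is not complete (several steps, such as the handling of $S^*(r,w)$ when $re$ overshoots $2N^c$, or the precise bookkeeping that limits the number of sums to $O(\logl^3)$, are asserted rather than carried out), but you acknowledge this explicitly and defer to \cite{BaWe} for the details --- which is precisely what the paper itself does.
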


The following application of Lemma~\ref{BaWelem} will be used in the proof of Theorem~\ref{bea2}.  We take
\begin{equation} \label{Wdef}
W(n) = \sum_{Q \leq q < 2Q} \sum_{\chi \bmod{q}} \eta_{\chi} \chi(n) e( \gamma n) 
\end{equation}
for $N \leq n < N'$; otherwise, $W(n)=0$.  Here $\eta_{\chi}$ is arbitrary with $| \eta_{\chi}| \leq 1$.

\begin{lemma} \label{S*withweights}
Suppose that
\[ \left| \gamma - \frac{u}{r} \right| \leq \frac{\logl^{A+1}}{r^2} , \; (u,r)=1, \; N=r^2 , \; 1 \leq Q \leq N^{2/7-\varepsilon} . \]
Define $S^*(r,z)$ as above with $W$ defined in \eqref{Wdef}.  Then
\[ \sum_{r \leq (2N)^{4/7}} u_r S^* \left( r, (2N)^{1/7} \right) \ll N \logl^{-A} \]
for every $A>0$, provided that $|u_r| \leq 1$, $u_r=0$ for $\left( r, P(N^{\varepsilon}) \right) > 1$.
\end{lemma}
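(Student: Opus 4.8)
The plan is to apply the Harman-sieve inequality of Lemma~\ref{BaWelem} with the parameters $c=4/7$ and $d=1/7$. The standing hypotheses of that lemma are immediate: $0\le d=1/7\le 1/2$; from \eqref{Wdef} and $|\eta_\chi|\le1$ we get $|W(n)|\le\sum_{Q\le q<2Q}\varphi(q)\ll Q^2\ll N^{4/7}\le N^{1/\varepsilon}$ once $N$ is large; and $|u_r|\le1$ with $u_r=0$ for $\left(r,P(N^\varepsilon)\right)>1$ is assumed. Since Lemma~\ref{BaWelem} loses a factor $\logl^3$, it therefore suffices to verify the Type~I and Type~II estimates \eqref{type1sumest}, \eqref{type2sumest} for our $W$ with $Y=N\logl^{-A-3}$; that is,
\[ \sum_{m\le 2N^{4/7}}a_m\sum_k W(mk)\ll N\logl^{-A-3}\qquad(|a_m|\le1) \]
and
\[ \sum_{N^{4/7}\le m\le 2N^{5/7}}a_m\sum_k b_k W(mk)\ll N\logl^{-A-3}\qquad(|a_m|\le1,\ |b_k|\le\tau(k)),\]
after which Lemma~\ref{BaWelem} delivers $\sum_{r\le(2N)^{4/7}}u_r S^*\bigl(r,(2N)^{1/7}\bigr)\ll Y\logl^3=N\logl^{-A}$.

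To treat either bilinear sum I would substitute \eqref{Wdef}, interchange the order of summation, use $\chi(m)\chi(k)=\chi(mk)$, and apply the triangle inequality over $q$ and $\chi$ (legitimate since $|\eta_\chi|\le1$); after a dyadic splitting of the ranges of $m$ and $k$ (an $O(\logl^2)$ loss) each resulting piece is majorised by a sum of the shape $S$ in \eqref{Ssumdef} with $MK\asymp N$. In the Type~I case this gives $b_k=1$ and $M\ll N^{4/7}$, which is exactly the setting of Lemma~\ref{SsumtoW}\eqref{Ssumestb}; in the Type~II case it gives $|b_k|\le\tau(k)$ with $m\in[N^{4/7},2N^{5/7}]$, and after interchanging the two variables so that the longer one lands in the window $N^{4/7}\ll K\ll N^{5/7}$ we are in the setting of Lemma~\ref{SsumtoW}\eqref{Ssumesta}. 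The hypotheses of Lemma~\ref{SsumtoW} hold because $N=r^2$ (so $r^2\le N\le r^2\logl^{2A+2}$) and $|\gamma-u/r|\le\logl^{A+1}/r^2$; the divisor quantity there is $D\le\max_{n<N}\tau(n)=N^{o(1)}\ll N^{\varepsilon/15}$; and $4MQ\ll N^{4/7}N^{2/7}=N^{6/7}<N$, so the underlying Lemmas~\ref{doubleaddmultsum}--\ref{Ssumestnew} apply throughout. In each piece Lemma~\ref{SsumtoW} yields a saving of a fixed positive power of $N$ over the trivial bound, so the required bound $\ll N\logl^{-A-3}$ follows once the $O(\logl^2)$ loss is absorbed.

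The choice $c=4/7$, $d=1/7$ is forced: $c=4/7$ and $d=1/7$ match the ranges $r\le(2N)^{4/7}$ and the sifting level $(2N)^{1/7}$ in the conclusion, and $[N^c,N^{c+d}]=[N^{4/7},N^{5/7}]$ is exactly the window in which the general-coefficient estimate \eqref{Ssumesta} is available, the complementary ``Type~I'' ranges being covered by the smooth estimate \eqref{Ssumestb}. The main obstacle is the bookkeeping in the reduction to the sums $S$: one must keep careful track of which variable plays the role of $m$ in Lemmas~\ref{doubleaddmultsum}--\ref{Ssumestnew} (these are organised around the shorter variable and the spacing estimate \eqref{spacingbound}, whose applicability rests on $|\gamma-u/r|\le\logl^{A+1}/r^2$), handle the dyadic blocks with $M\asymp N^{1/2}$ where the two variables are of comparable length, and confirm that the side conditions ($4MQ<N$, $D\ll N^{\varepsilon/15}$, and $|b_k|\le\tau(k)^{B}$ after any interchange) remain valid in every piece.
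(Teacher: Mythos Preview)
Your proposal is correct and follows the paper's own proof essentially verbatim: apply Lemma~\ref{BaWelem} with $c=4/7$, $d=1/7$, $Y=N\logl^{-A-3}$, and verify the Type~I and Type~II hypotheses \eqref{type1sumest}, \eqref{type2sumest} via Lemma~\ref{SsumtoW}. The paper's proof is two lines to this effect, and you have simply expanded the bookkeeping.
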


\begin{proof}
We need to verify \eqref{type1sumest} and \eqref{type2sumest} with $c=4/7$, $d=1/7$ and $Y=N \logl^{-A-3}$.  This is an application of Lemma~\ref{SsumtoW}.
\end{proof}

We now introduce some subsets of $\rear^j$ needed in the proof of Theorem~\ref{bea2}.  Write $E_j$ for the set of $j$-tuples $\mathbold{\alpha}_j = ( \alpha_1, \cdots, \alpha_j)$ satisfying
\[ \frac{1}{7} \leq \alpha_j < \alpha_{j-1} < \cdots < \alpha_1 \leq \frac{1}{2} \; \; \mbox{and} \; \; \alpha_1 + \alpha_2 + \cdots + \alpha_{j-1} + 2 \alpha_j \leq 1. \]
A tuple $\mathbold{\alpha}_j$ is said to be {\it good} if some subsum of $\alpha_1+ \cdots + \alpha_j$ is in $[2/7, 3/7] \cup [4/7, 5/7]$ and {\it bad} otherwise. \newline

We use the notation $p_j = (2N)^{\alpha_j}$.  For instance, the sum
\[ \sum_{\substack{p_1 p_2 n_3 =k \\ (2N)^{1/7} \leq p_2 < p_1 < (2N)^{1/2}}} \psi( n_3, p_2) \]
will be written as
\[ \sum_{\substack{p_1p_2n_3 = k \\ \mathbold{\alpha}_2 \in E_2}} \psi(n_3, p_2) . \]

\begin{lemma} \label{primesumest}
Let $\gamma$, $u/r$, $N$, $Q$ be as in Lemma~\ref{S*withweights} and $E$ be a subset of $E_j$ defined by a bounded number of inequalities of the form
\begin{equation} \label{ineqgenform}
c_1 \alpha_1 + \cdots + c_j \alpha_j < c_{j+1} \; (\mbox{or} \; \leq c_{j+1}).
\end{equation}
Suppose that all points in $E$ are good and that throughout $E$, $z_j$ is either the function $z_j = (2N)^{\alpha_j}$ or the constant $z_j = (2N)^{1/7}$.  Then for arbitrary $\eta_{\chi}$ with $| \eta_{\chi}| \leq 1$,
\[ \sum_{Q \leq q < 2Q} \sum_{\chi \bmod{q}} \eta_{\chi} \sum_{\substack{N \leq p_1 \cdots p_j n_{j+1} < N' \\ \mathbold{\alpha}_j \in E}} \chi(p_1 \cdots p_j n_{j+1}) e ( \gamma p_1 \cdots p_j n_{j+1}) \psi( n_{j+1} , z_j ) \ll N \logl^{-A} , \]
for every $A>0$.
\end{lemma}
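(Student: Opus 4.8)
The plan is to use the hypothesis that every point of $E$ is good to split the sum into $O(\logl^{O(j)})$ bilinear sums of ``Type II'' shape, each of which is controlled by \eqref{Ssumesta} of Lemma~\ref{SsumtoW}.

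First comes a combinatorial reduction. A point is good exactly when some subsum $\sum_{i\in S}\alpha_i$, for a nonempty $S\subseteq\{1,\dots,j\}$, lies in $[2/7,3/7]\cup[4/7,5/7]$; since there are only $O(1)$ subsets and two intervals, a union bound lets me assume that, throughout $E$, one \emph{fixed} subsum $\sum_{i\in S}\alpha_i$ lies in one \emph{fixed} of these intervals, the resulting region still being cut out by a bounded number of inequalities of the form \eqref{ineqgenform}. Then I dyadically localise each of $p_1,\dots,p_j$, so that $n_{j+1}$ is localised up to a factor $<2$ by $N\le p_1\cdots p_jn_{j+1}<N'$; this produces $O(\logl^{j})$ pieces, and the sharp inequalities defining $E$ that a given dyadic box does not respect are passed to integral form as in the proof of Lemma~\ref{hdid}, which only costs bounded factors and preserves the bilinear structure.

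On each piece I separate the variables into a long block and a short block: take $k=\prod_{i\in S}p_i$ if the distinguished subsum lies in $[4/7,5/7]$, and $k=n_{j+1}\prod_{i\notin S}p_i$ if it lies in $[2/7,3/7]$. In either case, after one further $O(1)$-fold dyadic split, $k$ runs over an interval $[K,2K)$ with $K\in[N^{4/7},N^{5/7}]$, the complementary product $m$ runs over $[M,2M)$ with $MK\asymp N$ and $M\in[N^{2/7},N^{3/7}]$, and $K\ge M$. As $\chi$ and $y\mapsto e(\gamma y)$ are totally multiplicative on $mk=p_1\cdots p_jn_{j+1}$, the piece becomes
\[ \sum_{Q\le q<2Q}\ \sum_{\chi\bmod q}\eta_\chi\mathop{\sum_{m\in[M,2M)}\ \sum_{k\in[K,2K)}}_{N\le mk<N'}a_mb_k\,\chi(mk)e(\gamma mk), \]
where $a_m$ counts the representations of $m$ as an ordered product of primes in the prescribed dyadic ranges, and $b_k$ counts the corresponding representations of $k$ weighted by $\psi(n_{j+1},z_j)$; hence $|a_m|\le\tau(m)^B$ and $|b_k|\le\tau(k)^B$ for a constant $B=B(j)$. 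The weight $\psi(n_{j+1},z_j)$ is harmless when $z_j=(2N)^{1/7}$, or when $n_{j+1}$ and the prime underlying $z_j=(2N)^{\alpha_j}$ sit on the same side of the split, for then it is a part of $b_k$; in the one awkward case, $z_j=(2N)^{\alpha_j}$ with $j\in S$, an extra dyadic localisation of $p_j$ makes $z_j$ constant up to a factor $2$ on each new piece, and the discrepancy $\psi(n_{j+1},z_j)-\psi(n_{j+1},2z_j)$ is a sum over one more prime in a dyadic range, which is again absorbed into the coefficient of the long variable.

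It remains to bound each piece by the quantity $S$ of \eqref{Ssumdef} (triangle inequality over $q$ and $\chi$, using $|\eta_\chi|\le1$) and to invoke \eqref{Ssumesta} of Lemma~\ref{SsumtoW}: its hypotheses are met since $Q<N^{2/7-\varepsilon}$ and $N=r^2$ with $|\gamma-u/r|\le\logl^{A+1}/r^2$ (both inherited from Lemma~\ref{S*withweights}), $K\in[N^{4/7},N^{5/7}]$, and the coefficients are divisor-bounded. Summing over the $O(\logl^{O(j)})$ pieces, and shrinking $\varepsilon$ if necessary to swallow the logarithmic losses, gives the claimed bound $\ll N\logl^{-A}$. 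I expect the real work to lie entirely in the bookkeeping of the first two steps: arranging the partition of $E$ so that on each piece the variables separate cleanly into a short block of size $\asymp N^{2/7}$--$N^{3/7}$ and a long block of size $\asymp N^{4/7}$--$N^{5/7}$, and detaching the sieve weight $\psi(n_{j+1},z_j)$ from that separation; the analytic input is wholly contained in Lemma~\ref{SsumtoW}.
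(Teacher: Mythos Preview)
Your proposal is correct and follows essentially the same route as the paper: group the variables according to a subset $S$ whose $\alpha$-sum lands in $[2/7,3/7]\cup[4/7,5/7]$, remove the finitely many linear constraints defining $E$ at the cost of logarithmic factors, and feed the resulting bilinear sum into part \eqref{Ssumesta} of Lemma~\ref{SsumtoW}. The paper's proof is a two-sentence sketch pointing to \cite{BaWe} for details; your version is more explicit, in particular about where $n_{j+1}$ goes and how to detach the weight $\psi(n_{j+1},z_j)$ when $z_j=p_j$ with $p_j$ on the opposite side of the split, but the underlying idea is identical.

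One small remark: the bound you (and the paper) ultimately obtain from Lemma~\ref{SsumtoW} is $\ll QN^{1-\varepsilon/4}$, which for $Q$ near $N^{2/7-\varepsilon}$ is larger than the stated target $N\logl^{-A}$. This is a harmless slip in the paper's stated conclusion; what is actually used downstream (Lemma~\ref{primechardetect}) only requires $\ll QN\logl^{-A}$, and your argument delivers that.
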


\begin{proof}
This is a consequence of \eqref{Ssumesta} of Lemma~\ref{SsumtoW}.  On grouping a subset of the variables as a product $m=\prod_{i \in \mathcal{S}} p_i$, with $S \subset \{ 1, \cdots, j \}$, we obtain a sum $S$ of the form appearing in \eqref{Ssumesta} of Lemma~\ref{SsumtoW}, except that a bounded number of inequalities of the form \eqref{ineqgenform} are present.  These inequalities may be removed at the cost of a log power, by the mechanism noted earlier.  See page 184 of \cite{BaWe} for a few more details of a similar argument.  The lemma follows at once.
\end{proof}

\begin{lemma} \label{buchstabdecomp}
Let $D= \{ (\alpha_1, \alpha_2) \in E_2 : (\alpha_1, \alpha_2) \; \mbox{is bad}, \; \alpha_1 + 2 \alpha_2 > 5/7 \}$.  Then
\[ X \left( \prip ; n \right) - \sum_{\substack{p_1 p_2 n_3 = n \\ \mathbold{\alpha}_2 \in D}} \psi( n_3, p_2) = \varrho_1(n) + \varrho_2(n) + \varrho_3(n) - \varrho_4(n) - \varrho_5(n) . \]
Here
\[ \varrho_1(n) = \psi(n, (2N)^{1/7}) , \; \varrho_4 (n) = \sum_{\substack{p_1 n_2 = n \\ \mathbold{\alpha}_1 \in E_1}} \psi(n_2, (2N)^{1/7}) , \; \varrho_2(n) = \sum_{\substack{p_1p_2n_3=n \\ \mathbold{\alpha}_2 \in E_2 \setminus D}} \psi(n_3 , (2N)^{1/7}) , \]
\[ \varrho_5(n) = \sum_{\substack{p_1p_2p_3n_4=n \\ \mathbold{\alpha}_3 \in E_3 \\ (\alpha_1, \alpha_2) \in E_2 \setminus D}} \psi(n_4, (2N)^{1/7}) \; \; \; \mbox{and} \; \; \; \varrho_3(n) = \sum_{\substack{p_1p_2p_3p_4n_5=n \\ \mathbold{\alpha}_4 \in E_4 \\ (\alpha_1, \alpha_2) \in E_2 \setminus D}} \psi(n_5, p_4) . \]
\end{lemma}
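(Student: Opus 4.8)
The plan is to prove the stated pointwise identity by iterating Buchstab's identity
\[
\psi(m,z)\;=\;\psi(m,w)\;-\;\sum_{\substack{w\le p<z\\ p\mid m}}\psi(m/p,\,p)\qquad(w\le z),
\]
which holds for every positive integer $m$. The starting point is that for $n\in[N,2N)$ one has $X(\prip;n)=\psi(n,(2N)^{1/2})$: indeed $n$ is prime precisely when it has no prime divisor $\le\sqrt n$, and since $(2N)^{1/2}\le N$ for large $N$ this is the same as $(n,P((2N)^{1/2}))=1$. Throughout I set $z_0=(2N)^{1/7}$; when Buchstab's identity peels off a prime $p_i=(2N)^{\alpha_i}$, the size constraints $z_0\le p_j<p_{j-1}<\dots<p_1<(2N)^{1/2}$ on the exposed primes are exactly what the conditions $\mathbold{\alpha}_j\in E_j$ record in the sums below (the remaining defining inequality $\alpha_1+\dots+\alpha_{j-1}+2\alpha_j\le 1$ of $E_j$ is automatic whenever $n/(p_1\cdots p_j)>1$, since $\psi(n_{j+1},p_j)\neq0$ then forces $n_{j+1}\ge p_j$; the boundary terms $n_{j+1}=1$ are simply kept).

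First I apply Buchstab's identity to $\psi(n,(2N)^{1/2})$, lowering the level to $z_0$: this produces $\varrho_1(n)=\psi(n,z_0)$ together with $-\sum_{p_1n_2=n,\ \mathbold{\alpha}_1\in E_1}\psi(n_2,p_1)$. Since every exposed $p_1$ is $\ge z_0$, I apply the identity a second time to each $\psi(n_2,p_1)$, lowering $p_1$ to $z_0$; this splits the previous sum as $-\varrho_4(n)+\sum_{p_1p_2n_3=n,\ \mathbold{\alpha}_2\in E_2}\psi(n_3,p_2)$, so that
\[
X(\prip;n)=\varrho_1(n)-\varrho_4(n)+\sum_{\substack{p_1p_2n_3=n\\ \mathbold{\alpha}_2\in E_2}}\psi(n_3,p_2).
\]
Now I split the last sum according to whether $(\alpha_1,\alpha_2)\in D$ or $(\alpha_1,\alpha_2)\in E_2\setminus D$, and move the $D$-part — which is precisely $\sum_{p_1p_2n_3=n,\ \mathbold{\alpha}_2\in D}\psi(n_3,p_2)$ — to the left-hand side.

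It remains to run the recursion two more steps on the surviving $E_2\setminus D$ piece. A third application of Buchstab's identity to each $\psi(n_3,p_2)$ there (lowering $p_2$ to $z_0$, legitimate since $p_2\ge z_0$) gives $+\varrho_2(n)$ and $-\sum\psi(n_4,p_3)$, the latter over triples with $(\alpha_1,\alpha_2)\in E_2\setminus D$ and $\mathbold{\alpha}_3\in E_3$; a fourth application (lowering $p_3$ to $z_0$) turns that into $-\varrho_5(n)$ plus $\sum\psi(n_5,p_4)$ over quadruples with $(\alpha_1,\alpha_2)\in E_2\setminus D$ and $\mathbold{\alpha}_4\in E_4$, which is exactly $\varrho_3(n)$. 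Tracking the four alternating signs, the terms reassemble as
\[
X(\prip;n)-\sum_{\substack{p_1p_2n_3=n\\ \mathbold{\alpha}_2\in D}}\psi(n_3,p_2)=\varrho_1(n)+\varrho_2(n)+\varrho_3(n)-\varrho_4(n)-\varrho_5(n),
\]
which is the assertion.

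Every step here is an exact identity, so no error terms arise; the routine points to verify are that $z_0$ lies below every interval of exposed primes (so that the $\psi(\cdot,z_0)$ factors and all four Buchstab steps make sense) and that the nested side conditions $(\alpha_1,\alpha_2)\in E_2\setminus D$ and $\mathbold{\alpha}_j\in E_j$ are carried correctly through the iteration, including the degenerate $n_{j+1}=1$ contributions. In this sense the proof is bookkeeping rather than analysis; the only place I expect to need genuine care is in checking that the branching at $D$ at level two and the termination at level four are consistent with the definitions of $\varrho_1,\dots,\varrho_5$ — a choice made (as will be exploited later) so that the $j$-tuples appearing in $\varrho_2,\varrho_3,\varrho_5$ are all good and hence amenable to Lemma~\ref{primesumest}, while $D$ isolates exactly the configurations that cannot be reduced further.
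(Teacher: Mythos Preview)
Your proof is correct and follows essentially the same route as the paper: iterate Buchstab's identity four times starting from $X(\prip;n)=\psi(n,(2N)^{1/2})$, split the level-two sum into the $D$ and $E_2\setminus D$ pieces, and continue the iteration only on the latter. The paper presents exactly this computation in two displayed identities; your additional remarks on the $E_j$ constraints being automatic and on the role of the $D$-split are sound but not needed for the bare identity.
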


\begin{proof}
We repeatedly use Buchstab's identity in the form
\[ \psi(m,z) = \psi(m,w) - \sum_{\substack{ph=m\\ w \leq p < z}} \psi(h,p) \; \; \; ( 2 \leq w < z) . \]
Thus
\begin{equation} \label{buchstab1}
\begin{split}
X ( \prip ; n) & = \psi(n, (2N)^{1/2}) = \psi(n, (2N)^{1/7}) - \sum_{\substack{(2N)^{1/7} \leq p_1 < (2N)^{1/2} \\ p_1n_2=n}} \psi(n_2, p_1) \\
& = \varrho_1(n) - \varrho_4(n) + \sum_{\substack{p_1p_2n_3 =n \\ \mathbold{\alpha}_2 \in E_2}} \psi(n_3,p_2) ,
\end{split}
\end{equation}
\[ X( \prip ; n ) - \sum_{\substack{p_1p_2n_3=n \\ \mathbold{\alpha}_2 \in D}} \psi(n_3 , p_2) = \varrho_1(n) - \varrho_4(n) + \sum_{\substack{p_1p_2n_3=n \\ \mathbold{\alpha}_2 \in E_2 \setminus D}} \psi(n_3 , p_2) . \]
Continuing the decomposition of the last sum,
\begin{equation} \label{buchstab2}
\sum_{\substack{p_1p_2n_3=n \\ \mathbold{\alpha}_2 \in E_2 \setminus D}} \psi(n_3 , p_2) = \sum_{\substack{p_1p_2n_3=n \\ \mathbold{\alpha}_2 \in E_2 \setminus D}} \psi( n_3 , (2N)^{1/7} ) - \sum_{\substack{p_1p_2p_3n_4=n \\ \mathbold{\alpha}_3 \in E_3 \\ (\alpha_1, \alpha_2) \in E_2 \setminus D}} \psi( n_4, (2N)^{1/7} ) + \sum_{\substack{p_1 p_2 p_3 p_4 n_5 = n \\ \mathbold{\alpha}_4 \in E_4 \\ (\alpha_1, \alpha_2) \in E_2 \setminus D}} \psi(n_5 , p_4) .
\end{equation}
Combining \eqref{buchstab1} and \eqref{buchstab2}, we complete the proof of the lemma.
\end{proof}

\begin{lemma} \label{primechardetect}
Let $r$, $u/r$, $N$ and $Q$ be as in Lemma~\ref{S*withweights} with $\varrho_1$, $\cdots$, $\varrho_5$ as in Lemma~\ref{buchstabdecomp}; we have
\[ \sum_{Q \leq q < 2Q} \sum_{\chi \bmod{q}} \eta_{\chi} \sum_{N \leq n < N
} \varrho_j (n) \chi(n) e(\gamma n) \ll QN \logl^{-A} \]
for arbitrary $\eta_{\chi}$ with $|\eta_{\chi}| \leq 1$ and any $A > 0$.
\end{lemma}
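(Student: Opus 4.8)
The plan is to prove, for each $j \in \{1,2,3,4,5\}$, that
\[ T_j := \sum_{Q \leq q < 2Q} \ \sum_{\chi \bmod{q}} \eta_{\chi} \sum_{N \leq n < N'} \varrho_j(n) \chi(n) e(\gamma n) \ll N \logl^{-A} \]
for every $A>0$; since $Q \geq 1$ this is stronger than the stated bound. Writing $W$ for the function in \eqref{Wdef} attached to the given $Q$ and weights $\eta_{\chi}$, we have $T_j = \sum_{N \leq n < N'} \varrho_j(n) W(n)$. Two mechanisms are available. First, whenever $\varrho_j$ is a sum over $n = p_1 \cdots p_i n_{i+1}$ with the product $r = p_1 \cdots p_i$ of the (distinct, decreasing) prime variables constrained to $r \leq (2N)^{4/7}$ and with weight $\psi(n_{i+1}, (2N)^{1/7})$, then $T_j = \sum_{r \leq (2N)^{4/7}} u_r S^*(r,(2N)^{1/7})$ with $u_r \in \{0,1\}$ (the decreasing prime factorisation being unique) and $u_r = 0$ unless $(r,P(N^{\varepsilon}))=1$ (every prime factor of $r$ exceeds $(2N)^{1/7} > N^{\varepsilon}$), so Lemma~\ref{S*withweights} applies. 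Second, if every point of the $\mathbold{\alpha}$-region of summation is good, then, after cutting that region into a bounded number of convex pieces each cut out by linear inequalities (the conditions ``good'' and ``$\mathbold{\alpha}_2 \in E_2 \setminus D$'' being finite unions of such inequalities), Lemma~\ref{primesumest} applies to each piece.

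The terms $\varrho_1$ and $\varrho_4$ fall under the first mechanism at once: $\varrho_1(n) = \psi(n,(2N)^{1/7})$ gives $T_1 = S^*(1,(2N)^{1/7})$, while $\varrho_4(n) = \sum_{p_1 n_2 = n, \ \mathbold{\alpha}_1 \in E_1} \psi(n_2,(2N)^{1/7})$ gives $T_4 = \sum_{r \leq (2N)^{4/7}} u_r S^*(r,(2N)^{1/7})$ with $u_r$ the indicator of the primes $r \in [(2N)^{1/7},(2N)^{1/2}]$; Lemma~\ref{S*withweights} finishes both. For $\varrho_3$, whose weight is $\psi(n_5,p_4)$ with $p_4 = (2N)^{\alpha_4}$, I would observe that every $\mathbold{\alpha}_4 \in E_4$ (a fortiori every point of the region of summation) is good: since $\alpha_3 < \alpha_1$ and $\alpha_4 < \alpha_2$ we have $\alpha_3 + \alpha_4 < \alpha_1 + \alpha_2$, hence $\alpha_3 + \alpha_4 < \tfrac12(\alpha_1+\alpha_2+\alpha_3+\alpha_4) \leq \tfrac12(1-\alpha_4) \leq \tfrac37$, while $\alpha_3 + \alpha_4 \geq \tfrac27$, so the subsum $\alpha_3 + \alpha_4$ lies in $[2/7,3/7]$. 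Thus $\varrho_3$ is handled by Lemma~\ref{primesumest} with $j=4$, $z_4 = (2N)^{\alpha_4}$.

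For $\varrho_2$ and $\varrho_5$ I would split the region of summation according to whether the relevant tuple is good or bad. On the good part, Lemma~\ref{primesumest} applies ($j=2$, $z_2 = (2N)^{1/7}$ for $\varrho_2$; $j=3$, $z_3 = (2N)^{1/7}$ for $\varrho_5$). On the bad part, the point is that the prime product is then $\leq (2N)^{4/7}$, so Lemma~\ref{S*withweights} applies. For $\varrho_2$: if $\mathbold{\alpha}_2 \in E_2 \setminus D$ is bad, then, by the definition of $D$, $\alpha_1 + 2\alpha_2 \leq 5/7$, whence $\alpha_1 + \alpha_2 \leq 5/7 - \alpha_2 \leq 4/7$ and $p_1 p_2 \leq (2N)^{4/7}$. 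For $\varrho_5$: a bad $3$-tuple $(\alpha_1,\alpha_2,\alpha_3)$ forces $(\alpha_1,\alpha_2)$ to be bad (a good subsum of $\alpha_1+\alpha_2$ would be a good subsum of $\alpha_1+\alpha_2+\alpha_3$), so again $\alpha_1 + 2\alpha_2 \leq 5/7$; and if we had $\alpha_1+\alpha_2+\alpha_3 > 4/7$, then badness of the $3$-tuple (its full sum being a subsum not in $[4/7,5/7]$) would give $\alpha_1+\alpha_2+\alpha_3 > 5/7$, so that $\alpha_1 + 2\alpha_2 \geq \alpha_1+\alpha_2+\alpha_3 > 5/7$ (using $\alpha_2 \geq \alpha_3$), a contradiction; hence $\alpha_1+\alpha_2+\alpha_3 \leq 4/7$ and $p_1p_2p_3 \leq (2N)^{4/7}$. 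Summing the $O(1)$ contributions in each case yields $T_j \ll N\logl^{-A}$, and the lemma follows.

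The routine part is the bookkeeping with $W$, $S^*$ and the $0/1$-valued weights $u_r$, which runs exactly parallel to the passages preceding Lemmas~\ref{S*withweights} and \ref{primesumest}. The only step needing care is the short piece of linear arithmetic in the preceding paragraph, namely the verification that on the ``bad'' subregions the relevant prime product never exceeds $(2N)^{4/7}$; this is precisely where the threshold $5/7$ in the definition of $D$ and the defining inequalities of $E_2$, $E_3$, $E_4$ enter, but it has no analytic content.
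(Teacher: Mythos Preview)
Your proof is correct and follows the same overall strategy as the paper: apply Lemma~\ref{S*withweights} where the prime product stays below $(2N)^{4/7}$ and Lemma~\ref{primesumest} where the tuple is good, with the same linear arithmetic showing that on the bad part of $E_2\setminus D$ (for $\varrho_2$) and of the $\varrho_5$-region one has $\alpha_1+\cdots+\alpha_i\le 4/7$. The one minor difference is in the $\varrho_3$ case: you exhibit $\alpha_3+\alpha_4\in[2/7,3/7]$ as the good subsum for every $\mathbold{\alpha}_4\in E_4$, whereas the paper argues by contradiction that $\alpha_1+\alpha_2\in(2/7,3/7)$; your argument is slightly cleaner and does not use the constraint $(\alpha_1,\alpha_2)\in E_2\setminus D$.
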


\begin{proof}
This follows from Lemmas \ref{S*withweights} and \ref{primesumest} for $j=1,2, 4, 5$ on noting that $\alpha_1 + \alpha_2 + \alpha_3 \leq \alpha_1 + 2 \alpha_2 \leq 5/7$ for $j=5$, so that either $\mathbold{\alpha}_3$ is good or $\alpha_1 + \alpha_2 + \alpha_3 < 4/7$ (similarly for $j=2$).  For $j=3$, we need to show that each $\mathbold{\alpha}_4$ counted is good.  Suppose that some $\mathbold{\alpha}_4$ is bad.  We have $\alpha_1 + \alpha_2 + \alpha_3 + 2 \alpha_4 \leq 1$.  Hence $\alpha_1 + \alpha_2 + \alpha_3 \leq 5/7$ from which we infer that $\alpha_1 + \alpha_2 + \alpha_3 < 4/7$.  Therefore, $\alpha_1 + \alpha_2 < 3/7$.  But we know that $\alpha_1 + \alpha_2 > 2/7$.  This makes $\mathbold{\alpha}_4$ good, a contradiction. 
\end{proof}

\section{Proof of Theorems \ref{beabomvino} and \ref{beabardavhal}}

\begin{proof}[Proof of Theorem \ref{beabomvino}]
With a suitable choice of $a_q$, $(a_q, q) =1$, we have
\begin{equation*}
\begin{split}
 \max_{(a,q)=1} E (N, N', \gamma, q, a) & \leq \sup_I \left| \sum_{\substack{N \leq n < N' \\ \gamma n \in I \bmod{1} \\ n \equiv a_q \bmod{q}}} \Lambda(n) - |I| \sum_{\substack{N \leq n < N' \\ n \equiv a_q \bmod{q}}} \Lambda(n) \right| + \left| \sum_{\substack{N \leq n < N' \\ n \equiv a_q \bmod{q}}} \Lambda(n) - \frac{N'-N}{\varphi(q)} \right| \\
 &= T_1(q) + T_2(q),
 \end{split}
 \end{equation*}
say.  In view of the Bombieri-Vinogradov theorem, we need only bound $\sum_q T_1(q)$, which is, applying Lemma~\ref{Baklem},
\[ \ll \sum_{q \leq N^{1/4-\varepsilon}} \logl^{-A-1} \sum_{\substack{N \leq n < N' \\ n \equiv a_q \bmod{q}}} \Lambda(n) + \sum_{q \leq \min (r, N^{1/4})N^{-\varepsilon}} \sum_{h \leq \logl^{A+1}} \frac{1}{h} \left| \sum_{\substack{N \leq n < N' \\ n \equiv a_q \bmod{q}}} \Lambda(n) e( \gamma nh) \right| . \]
Let $H= \logl^{A+1}$.  Mindful of the Brun-Titchmarsh inequality, it remains to show that for $1 \leq h \leq H$,
\[ \sum_{q \leq \min (N^{1/4}, r) N^{-\varepsilon}} \left| \sum_{\substack{N \leq n < N' \\ n \equiv a_q \bmod{q}}} \Lambda(n) e ( \gamma nh) \right| \ll N \logl^{-A-1} . \]
Reducing $hu/r$ into lowest terms, we need only show that
\[ \sum_{q \leq \min (N^{1/4}, r) N^{-\varepsilon/2}} \eta_q \sum_{\substack{N \leq n < N' \\ n \equiv a_q \bmod{q}}} \Lambda(n) e (\gamma n) \ll N \logl^{-A-1} \]
under the modified hypothesis \eqref{modgammacond} on $\gamma$ (with $H= \logl^{A+1}$), whenever $| \eta_q | \leq 1$. \newline

Using Lemma~\ref{hdid}, it suffices to show that
\begin{equation} \label{afterhdid}
\sum_{q \leq \min (N^{1/4}, r) N^{-\varepsilon/2}} \eta_q \mathop{\sum_{M \leq m < 2M} \sum_{K \leq k < 2K}}_{\substack{N \leq mk < N' \\ mk \equiv a_q \bmod{q}}} a_m b_k e (\gamma mk)  \ll N \logl^{-A-3}
\end{equation}
under either of the following sets of conditions.
\begin{enumerate}[(a)]
\item \label{type2sumreq} $ \| a \|_2 \| b \|_2 \ll N^{1/2} \logl^2$, $N^{1/2} \leq K \leq N^{3/4}$;
\item \label{type1sumreq} $|a_m| \leq 1$, $b_k =1$ for $k \in I_m \subset [K, 2K)$, $b_k = 0$ otherwise, $M \leq N^{1/4}$.
\end{enumerate}

We use Dirichlet characters to detect the congruence relation in \eqref{afterhdid} and we require the estimate
\[ \sum_{q \leq \min (N^{1/4}, r) N^{-\varepsilon/2}} \frac{\eta_q}{\varphi(q)} \sum_{\chi \bmod{q}} \overline{\chi} (a_q) \mathop{\sum_{M \leq m < 2M} \sum_{K \leq k < 2K}}_{N \leq mk < N'} a_m b_k \chi(mk) e (\gamma mk)  \ll N \logl^{-A-4} . \]
It suffices to show that
\begin{equation} \label{afterhdid2}
S: = \sum_{Q \leq q < 2Q} \sum_{\chi \bmod{q}} \left| \mathop{\sum_{M \leq m < 2M} \sum_{K \leq k < 2K}}_{N \leq mk < N'} a_m b_k \chi(mk) e(\gamma mk) \right| \ll QN \logl^{-A-6}
\end{equation}
for $Q \leq \min (N^{1/4} , r ) N^{-\varepsilon/2}$. \newline

In case \eqref{type2sumreq}, we apply Lemma~\ref{doubleaddmultsum}, which gives
\begin{equation*}
\begin{split}
S & \ll N^{1/2+\varepsilon/6} \left( Q^2 M^{1/2} + \frac{Q^{3/2} N^{1/2}}{r^{1/2}} + Q^{3/2} K^{1/2} + Q^{3/2} r^{1/2} \right) \\
& \ll N^{3/4+\varepsilon/6} Q^2 + \frac{N^{1+\varepsilon/6} Q^{3/2}}{r^{1/2}} + Q^{3/2} N^{7/8+\varepsilon/6} .
\end{split}
\end{equation*}
Each one of these three terms is $\ll QN \logl^{-A-6}$ as
\[ N^{3/4+\varepsilon/6} Q^2 ( QN \logl^{-A-6} )^{-1} \ll QN^{-1/4+\varepsilon/5} \ll 1, \]
\[ N^{1+\varepsilon/6} Q^{3/2} r^{-1/2} (QN \logl^{-A-6})^{-1} \ll Q^{1/2} N^{\varepsilon/4} r^{-1/2} \ll 1 , \]
since $Q \leq r N^{-\varepsilon/2}$, and
\[ N^{7/8+\varepsilon/6} Q^{3/2} (QN \logl^{-A-6})^{-1} \ll N^{-1/8+\varepsilon/5} Q^{1/2} \ll 1 . \] 

In case \eqref{type1sumreq}, we use Lemma~\ref{Ssumestnew}.  Suppose that $K < N^{1-\varepsilon/4}$; \eqref{doubleaddmultsum2} of Lemma~\ref{Ssumestnew} gives
\[ S \ll Q^{3/2} N^{\varepsilon/6} \left( \frac{N}{r} + QM + \frac{K}{Q} + r \right) . \]
Each of the above four terms is $\ll QN \logl^{-A-6}$, since
\[ \frac{Q^{3/2}N^{1+\varepsilon/6}}{r} (QN \logl^{-A-6})^{-1} \ll Q^{1/2} r^{-1} N^{\varepsilon/5} \ll 1, \]
\[ Q^{5/2} N^{\varepsilon/6} M (QN \logl^{-A-6})^{-1} \ll Q^{3/2} N^{-3/4+\varepsilon/5} \ll 1, \]
\[ Q^{1/2} N^{\varepsilon/6} K (QN \logl^{-A-6})^{-1} \ll K N^{-1+\varepsilon/4} \ll 1\]
and
\[ Q^{3/2} N^{\varepsilon/6} r (QN \logl^{-A-6})^{-1} \ll Q^{1/2} N^{-1/4+\varepsilon/5} \ll 1. \]
Now suppose that $K \geq N^{1-\varepsilon/4}$.  Then
\[ 4MQ \ll QN^{\varepsilon/4}, \; \mbox{thus} \; 4MQ < r \]
and
\[ 4MQr \left| \gamma - \frac{u}{r} \right| \ll MQ N^{-3/4}, \; \mbox{hence} \; 4MQr \left| \gamma - \frac{u}{r} \right| \leq \frac{1}{2} .\]
So \eqref{doubleaddmultsum3} of Lemma~\ref{Ssumestnew} gives comfortably:
\[ S \ll N^{\varepsilon} Q^{3/2} r \ll QN \logl^{-A-6} , \]
completing the proof.
\end{proof}

\begin{proof}[Proof of Theorem~\ref{beabardavhal}]
We first show that the contribution to the sum in \eqref{beabardavhaleq} from $q \leq \logl^{A+1}$ is
\[ \ll N^2 \logl^{-A} \ll NR. \]
Since, for some $Q \leq \logl^{A+1}$,
\[ \sum_{q \leq \logl^{A+1}} \sum_{\substack{a=1 \\ (a,q)=1}}^q E^2 \ll N \sum_{q \leq \logl^{A+1}} \frac{1}{\varphi(q)} \sum_{\substack{a=1 \\ (a,q)=1}}^q E(N, N', \gamma, q, a) \ll \frac{N \logl}{Q} \sum_{Q \leq q < 2Q} \max_{(a,q)=1} E(N, N', \gamma, q, a) , \]
it suffices to show for this $Q$ that
\begin{equation} \label{smallqest}
\sum_{Q \leq q < 2Q} \max_{(a,q)=1} E(N,N', \gamma, q, a) \ll QN \logl^{-A-1} .
\end{equation}
We may suppose that $A$ is large.  Arguing as in the proof of Theorem~\ref{beabomvino}, we need only show that \eqref{afterhdid2} follows from either \eqref{type2sumreq} or \eqref{type1sumreq}.  By Dirichlet's theorem, there is a rational approximation $b/r$ to $\gamma$ satisfying \eqref{gammaratapprox}.  For any $\eta > 0$,
\[ N^{-3/4} \geq \| \gamma r \| \gg \exp (- r^{\eta}) , \]
hence $r \gg \logl^{5A}$.  Now we apply Lemma~\ref{doubleaddmultsum} to prove the desired bound under \eqref{type2sumreq}.  Since $D \leq Q \leq \logl^{A+1}$, the term
\[ \| a \|_2 \| b \|_2 \logl^2 D^{1/2} Q^{3/2} H^{1/2} N^{1/2} r^{-1/2} \]
presents no difficulty; the other terms are clearly all small enough.  For the bound under \eqref{type1sumreq}, a similar remark applies to Lemma~\ref{Ssumestnew} and the terms
\[ Q^{3/2} \logl DNHr^{-1} \]
if $K < N^{1-\varepsilon/4}$ and
\[ \logl D Q^{3/2} r \]
if $K \geq N^{1-\varepsilon/4}$.  This establishes \eqref{smallqest}. \newline

It remains to examine the contribution to the sum in \eqref{beabardavhaleq} from $q \in [Q, 2Q)$ with $\logl^{A+1} \leq Q \leq R$.  We have
\begin{equation*}
\begin{split}
 \sum_{Q \leq q < 2Q} \sum_{\substack{a=1 \\ (a,q)=1}}^q E(N, N', \gamma, q, a)^2 \ll \sum_q \sum_a \sup_I & \left| \sum_{\substack{N < n \leq N' \\ \{ \gamma n \} \in I \\ n \equiv a \bmod{q}}} \Lambda(n) - |I| \sum_{\substack{N < n \leq N' \\ n \equiv a \bmod{q}}} \Lambda(n) \right|^2 \\
 &+ \sum_q \sum_a \left( \sum_{\substack{N < n \leq N' \\ n \equiv a \bmod{q}}} \Lambda(n) - \frac{N'-N}{\varphi(q)} \right)^2 = T_1(Q) + T_2(Q),
 \end{split}
 \end{equation*}
say.  Since $T_2(Q)$ is covered by a slight variant of the discussion in \cite[Chapter 29]{HD}, we focus our attention on $T_1(Q)$.  By Lemma~\ref{Baklem},
\begin{equation*}
\begin{split}
T_1(Q) & \ll \sum_{Q \leq q < 2Q} \sum_{\substack{a=1 \\ (a,q)=1}}^q \logl^{-2A} \left( \sum_{\substack{N < n \leq N' \\ n\equiv a \bmod{q}}} \Lambda(n) \right)^2 +\sum_{Q \leq q < 2Q} \sum_{\substack{a=1 \\ (a,q)=1}}^q \left( \sum_{h \leq \logl^A} \frac{1}{h} \left| \sum_{\substack{N < n \leq N' \\ n \equiv a \bmod{q}}} \Lambda(n) e ( \gamma n h) \right| \right)^2 \\
& = T_3(Q) + T_4(Q),
\end{split}
\end{equation*}
say.  The Brun-Titchmarsh Theorem gives a satisfactory bound for $T_3(Q)$.  Applying Cauchy's inequality to $T_4(Q)$, we get
\begin{equation*}
\begin{split}
T_4(Q) & \leq \left( \sum_{h \leq \logl^A} \frac{1}{h} \right) \sum_{h \leq \logl^A} \frac{1}{h} \sum_{Q \leq q < 2Q} \sum_{\substack{a=1 \\ (a,q)=1}}^q \left| \sum_{\substack{N < n \leq N' \\ n \equiv a \bmod{q}}} \Lambda(n) e (\gamma n h ) \right|^2 \\
& \ll (\log \logl)^2 \sum_{Q \leq q < 2Q} \frac{1}{\varphi(q)} \sum_{\chi \bmod{q}} \left| \sum_{N < n \leq N'} \Lambda(n) \chi(n) e(\gamma n h) \right|^2,
\end{split}
\end{equation*}
for some $h \leq \logl^A$.  From this point, we can conclude the proof by following, with slight changes, the argument in \cite[pp. 170-171]{HD}.
\end{proof}

\section{Proof of Theorems \ref{bea1} and \ref{bea2}}

\begin{proof}[Proof of Theorem~\ref{bea1}]
Let $\gamma = \alpha^{-1}$ and $N \geq C_1(\alpha, t)$, $0 < \varepsilon < C_2(\alpha, t)$.  By Dirichlet's theorem, there is a reduced fraction $b/r$ satisfying \eqref{gammaratapprox}.  Our hypothesis on $\alpha$ implies that
\[ N^{-3/4} \geq \| \gamma r \| \gg r^{-3} , \; r \gg N^{1/4} . \]
Let $h_1''$, $\cdots$, $h_l''$ be the first $l$ primes in $(l, \infty)$.  Any translate
\[ \mathcal{H} = \{ h_1' , \cdots, h_k' \} + h , \; h \in \natn \]
with $\{ h_1' , \cdots, h_k' \}  \subset \{ h_1'', \cdots, \; h_l'' \}$, is an admissible set.  Using \eqref{Baklem1} of Lemma~\ref{Baklem}, we choose $h_1'$, $\cdots$, $h_k'$ so that
\begin{equation} \label{klowerbound}
k \geq \varepsilon \gamma l
\end{equation}
and for some real $\eta$,
\[ - \gamma h_m' \in ( \eta, \eta + \varepsilon \gamma ) \pmod{1} \]
for every $m = 1, \cdots , k$.  Now choose $h \in \natn$, $h \ll_{\gamma} 1$ so that
\[ h \gamma \in ( \eta - \varepsilon \gamma, \eta ) \pmod{1} . \]
Thus, writing $h_m = h_m'+h$, we have
\[ - \gamma h_m = - \gamma h_m' - \gamma h \in (0 , 2 \varepsilon \gamma) \pmod{1}  . \]

We apply Theorem~\ref{gentheo} to the set
\[ \mathcal{A} = \{ n \in [N, 2N) : \gamma m \in I \pmod{1} \} \]
where $I = ( \gamma \beta - \gamma, \gamma \beta )$, taking $q_0 = q_1 = 1$, $s=1$, $\varrho(n) = X ( \prip; n)$, $\theta = 1/4 - \varepsilon$, $b = 1-2 \varepsilon$,
\[ Y = \gamma N, \; Y_{1,m} = l_m \int\limits_N^{2N} \frac{1}{\log t} \dif t = \frac{l_m Y}{\logl \gamma} \left( 1 + o(1) \right) . \]
Here $J_m$, $l_m$ are the interval $J$ and its length $l$ in Lemma~\ref{intervallem} (with $\varepsilon \gamma$ in place of $\varepsilon$), so that
\[ \gamma > l_m > \gamma (1-2 \varepsilon) . \]
Since \eqref{regcond1} can be proved in a similar (but simpler) fashion to \eqref{regcond3}, we only show that \eqref{regcond3} holds.  We can rewrite this in the form
\begin{equation} \label{bomvinoalt}
\sum_{q \leq x^{1/4-\varepsilon}} \mu^2(q) \tau_{3k}(q) \left| \sum_{\substack{ N+h_m \leq p < 2N \\ p \equiv a_q \bmod{q} \\ \gamma p \in J_m \bmod{1}}} 1 - \frac{l_m}{\varphi(q)} \int\limits_N^{2N} \frac{\dif t}{\log t} \right| \ll N \logl^{-k-\varepsilon} .
\end{equation}
The function $E(N, N' , \gamma, q, a)$ appearing in Theorem~\ref{beabomvino} is not quite in the form that we need.  However, discarding prime powers and using partial summation in the standard way, we readily deduce a variant of \eqref{bomvinoalt} from Theorem~\ref{beabomvino}, in which $N \logl^{-A}$ appears in place of $N \logl^{-k-\varepsilon}$, and the weight $\mu^2(q) \tau_{3k} (q)$ is absent.  We then obtain \eqref{bomvinoalt} by using Cauchy's inequality; see \cite[(5.20)]{May} for a very similar computation. \newline

We are now in a position to use Theorem~\ref{gentheo}, obtaining a set of $\mathcal{S}$ of $t$ primes in $\mathcal{A} \cap [N, 2N)$, which of course have the form $[\alpha n + \beta]$, with
\[ D(\mathcal{S}) \leq h_k - h_1 \leq h_l'' \]
provided that
\begin{equation} \label{Mkreq}
M_k > \frac{2t-2}{(1-2\varepsilon)(1/4-\varepsilon)} .
\end{equation}
We take $l$ to be the least integer with
\[ \log (\varepsilon \gamma l) \geq \frac{2t-2}{(1-2\varepsilon)(1/4-\varepsilon)} + C \]
for a suitable absolute constant $C$, so that \eqref{Mkreq} follows from \eqref{klowerbound} and \eqref{mklowerbound}.  Therefore,
\[ \gamma l \ll \exp (8t) , \; l \ll \alpha \exp (8t) , \; D( \mathcal{S} ) \ll l \log l \ll \alpha (t + \log \alpha ) \exp (8t) , \]
completing the proof.
\end{proof}

In the proof of Theorem~\ref{bea2}, we shall need the following.

\begin{lemma} \label{intbound}
Let $D$ be as in Lemma~\ref{buchstabdecomp} and let $\omega_0(t)$ denote Buchstab's function.
\begin{enumerate}[(i)]
\item \label{intbound1} The points of $D$ lie in two triangles $A_1$, $A_2$, where $A_1$ has vertices 
\[ \left( \frac{5}{21}, \; \frac{5}{21} \right), \; \left( \frac{2}{7} , \frac{3}{14} \right) , \; \left( \frac{2}{7} , \frac{2}{7} \right) \]
and $A_2$ has vertices
\[ \left( \frac{1}{2}, \; \frac{3}{14} \right), \; \left( \frac{3}{7} , \frac{2}{7} \right) , \; \left( \frac{1}{2} , \frac{1}{4} \right) . \]
\item \label{intbound2} For $j=1,2$, let
\[ I_j = \int\limits_{A_j} \frac{1}{\alpha_1 \alpha_2^2} \omega_0 \left( \frac{1-\alpha_1 - \alpha_2}{\alpha_2} \right) \dif \alpha_1 \dif \alpha_2 . \]
Then $I_1 < 0.03925889$ and $I_2 < 0.0566295$.
\end{enumerate}
\end{lemma}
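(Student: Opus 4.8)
The plan is to handle the first assertion by a direct analysis of the linear inequalities cutting out $D$, and then to deduce the second by inserting the explicit piecewise formula for $\omega_0$, which turns each $I_j$ into a sum of integrals of elementary functions over triangular regions.

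For part (i), fix $(\alpha_1,\alpha_2)\in D$, so $\tfrac17\le\alpha_2<\alpha_1\le\tfrac12$, $\tfrac57<\alpha_1+2\alpha_2\le1$, and none of $\alpha_1,\alpha_2,\alpha_1+\alpha_2$ lies in $[\tfrac27,\tfrac37]\cup[\tfrac47,\tfrac57]$. First I would observe that $\alpha_1+\alpha_2=\tfrac12(\alpha_1+2\alpha_2)+\tfrac12\alpha_1>\tfrac12\cdot\tfrac57+\tfrac12\cdot\tfrac17=\tfrac37$ (using $\alpha_1>\alpha_2\ge\tfrac17$), so ``badness'' of $\alpha_1+\alpha_2$ forces either $\tfrac37<\alpha_1+\alpha_2<\tfrac47$ or $\alpha_1+\alpha_2>\tfrac57$. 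In the first case $\alpha_1=(\alpha_1+\alpha_2)-\alpha_2<\tfrac47-\tfrac17=\tfrac37$, whence badness of $\alpha_1$ gives $\alpha_1<\tfrac27$; together with $\alpha_2\le\alpha_1$ and $\alpha_1+2\alpha_2\ge\tfrac57$ this confines $(\alpha_1,\alpha_2)$ to the triangle with vertices $(\tfrac5{21},\tfrac5{21})$, $(\tfrac27,\tfrac3{14})$, $(\tfrac27,\tfrac27)$, and the remaining conditions from $E_2$ are automatic there. In the second case $\alpha_2=(\alpha_1+2\alpha_2)-(\alpha_1+\alpha_2)<1-\tfrac57=\tfrac27$, and combining $\alpha_1+\alpha_2\ge\tfrac57$, $\alpha_1+2\alpha_2\le1$, $\alpha_1\le\tfrac12$ confines $(\alpha_1,\alpha_2)$ to $A_2$; here too $\alpha_2\ge\tfrac17$, $\alpha_2<\alpha_1$ and the badness of $\alpha_1$ follow automatically. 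Thus $D\subseteq A_1\cup A_2$ (in fact $A_1$ and $A_2$ are precisely the closures of the two pieces of $D$, so that $\int_D=\int_{A_1}+\int_{A_2}$).

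For part (ii), recall $\omega_0(u)=u^{-1}$ for $1\le u\le2$ and $u\,\omega_0(u)=1+\log(u-1)$ for $2\le u\le3$. With $u=(1-\alpha_1-\alpha_2)/\alpha_2$ one has the equivalences $u\ge1\Leftrightarrow\alpha_1+2\alpha_2\le1$, $u\le2\Leftrightarrow\alpha_1+3\alpha_2\ge1$, $u\le\tfrac73\Leftrightarrow3\alpha_1+10\alpha_2\ge3$; the left sides here are affine, so each inequality is decided on a triangle by its three vertex values. Checking the six vertices shows $\alpha_1+2\alpha_2\le1$ on $A_1\cup A_2$ (so $u\ge1$ and the formulas above apply), $\alpha_1+3\alpha_2\ge1$ throughout $A_2$ (so $u\le2$ and $\omega_0(u)=u^{-1}$ there), and on $A_1$ one has $3\alpha_1+10\alpha_2\ge3$ (so $u\le\tfrac73<3$) while $\alpha_1+3\alpha_2-1$ is negative at two vertices of $A_1$ and positive at the third, so the line $\alpha_1+3\alpha_2=1$ (that is, $u=2$) cuts $A_1$ into a subtriangle $A_1'$ containing $(\tfrac27,\tfrac27)$, where $u\le2$, and a quadrilateral $A_1''$, where $2\le u\le\tfrac73$. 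Using $\tfrac1u=\tfrac{\alpha_2}{1-\alpha_1-\alpha_2}$ and $u-1=\tfrac{1-\alpha_1-2\alpha_2}{\alpha_2}$ this gives
\[
 I_2=\int_{A_2}\frac{\dif\alpha_1\,\dif\alpha_2}{\alpha_1\alpha_2(1-\alpha_1-\alpha_2)},\qquad I_1=\int_{A_1'}\frac{\dif\alpha_1\,\dif\alpha_2}{\alpha_1\alpha_2(1-\alpha_1-\alpha_2)}+\int_{A_1''}\frac{1+\log\frac{1-\alpha_1-2\alpha_2}{\alpha_2}}{\alpha_1\alpha_2(1-\alpha_1-\alpha_2)}\,\dif\alpha_1\,\dif\alpha_2.
\]

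Finally, each inner ($\alpha_2$-)integral is evaluated using $\tfrac1{\alpha_2(1-\alpha_1-\alpha_2)}=\tfrac1{1-\alpha_1}\big(\tfrac1{\alpha_2}+\tfrac1{1-\alpha_1-\alpha_2}\big)$, which yields elementary antiderivatives (logarithms, and for the $A_1''$ integral also dilogarithms, after the substitution $v=2\alpha_2/(1-\alpha_1)$); carrying out the remaining one-dimensional $\alpha_1$-integrals then gives $I_1$ and $I_2$ in closed form, and evaluating these yields the two numerical bounds asserted in the lemma. (Alternatively, once the integrands are written out explicitly one simply bounds them pointwise and estimates the integrals by quadrature.) I expect the genuinely delicate step to be the $A_1''$ contribution to $I_1$: there the logarithmic factor coming from the $[2,3]$-branch of $\omega_0$ makes the inner integral a combination of dilogarithm values that must be controlled to several decimal places in order to beat the stated bounds, and one must also verify that $u$ never leaves $[1,3]$ on $A_1\cup A_2$, so that only the two branches of $\omega_0$ used here are ever needed. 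The rest—the inequality-chasing in part (i) and the partial-fraction bookkeeping—is routine.
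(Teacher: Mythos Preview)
Your treatment of part~(i) is correct and is essentially the same case split as the paper's: one distinguishes $\alpha_1+\alpha_2>5/7$ from $\alpha_1+\alpha_2\le 5/7$ (your preliminary observation that $\alpha_1+\alpha_2>3/7$ just rules out an empty case in advance), and in the latter case the badness chain $\alpha_1+\alpha_2<4/7\Rightarrow\alpha_1<3/7\Rightarrow\alpha_1<2/7$ matches the paper verbatim.

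For part~(ii) the paper gives no argument at all beyond ``requires a computer calculation, which was kindly carried out by Andreas Weingartner.'' Your approach is therefore not so much different as \emph{more explicit}: you identify that on $A_2$ one has $u=(1-\alpha_1-\alpha_2)/\alpha_2\in[1,2]$ so $\omega_0(u)=1/u$, while $A_1$ is split by the line $\alpha_1+3\alpha_2=1$ into a piece with $u\in[1,2]$ and a piece with $u\in[2,7/3]$ where $u\,\omega_0(u)=1+\log(u-1)$, reducing both $I_j$ to integrals of elementary (and, on $A_1''$, dilogarithmic) functions over explicit polygons. This is a perfectly valid way to organise the numerical verification; the paper's black-box computer check and your reduction-then-evaluate plan lead to the same endpoint, with your version having the advantage of making transparent exactly which branches of $\omega_0$ are in play and why no higher branches enter.
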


\begin{proof} Let $(\alpha_1, \alpha_2) \in D$.  If $\alpha_1 + \alpha_2 > 5/7$, then we have
\[ \alpha_1 + \alpha_2 > \frac{5}{7}, \; \alpha_1 + 2 \alpha_2 \leq 1, \; \alpha_1 \leq \frac{1}{2} . \]
This defines a triangle which is easily verified to be $A_2$.  If $\alpha_1 + \alpha_2 \leq 5/7$, then as $\mathbold{\alpha}_2$ is bad, we have in turn
\[ \alpha_1 + \alpha_2 < \frac{4}{7} , \; \alpha_1 < \frac{3}{7} , \; \alpha_1 < \frac{2}{7} . \]
Altogether, we have
\[ \alpha_1 + 2 \alpha_2 > \frac{5}{7} , \; \alpha_1 < \frac{2}{7} , \; \alpha_2 < \alpha_1 . \]
This defines a triangle which we can verify to be $A_1$.  This proves \eqref{intbound1}.  Now \eqref{intbound2} requires a computer calculation, which was kindly carried out by Andreas Weingartner.
\end{proof}

\begin{proof} [Proof of Theorem~\ref{bea2}]
With a different value of $l$, we choose $h_1''$, $\cdots$, $h_l''$ and $h_1$, $\cdots$, $h_k$ exactly as in the proof of Theorem~\ref{bea1}.  In applying Theorem~\ref{gentheo}, we also take $I$, $\mathcal{A}$, $q_0$, $q_1$, $Y$, $J_m$, $l_m$ as in that proof, but now $\theta=2/7-\varepsilon$, $s=5$, $a=3$; the functions $\varrho_1(n)$, $\cdots$, $\varrho_5(n)$ are given in Lemma~\ref{buchstabdecomp}. \newline

There is little difficulty in verifying \eqref{regcond1} by a similar but simpler version of the proof of \eqref{regcond3}.  So we concentrate on \eqref{regcond3}.  We recall that this can be rewritten as
\begin{equation} \label{bomvinoalt2}
\sum_{q \leq x^{\theta}} \mu^2 (q) \tau_{3k} (q) \left| \sum_{\substack{ n \equiv a_q \bmod{q} \\ \gamma n \in J_m \bmod{1} \\ N + h_m \leq n < 2N}} \varrho_g(n) - \frac{Y_{g,m}}{\varphi(q)} \right| \ll N \logl^{-k-\varepsilon} .
\end{equation}
We define $Y_{g,m}$ by
\[ Y_{g,m} = l_m \sum_{N \leq n < 2N} \varrho_g(n) . \]
It is well known that
\begin{equation} \label{Ygmapprox}
Y_{g,m} = \frac{l_m c_g N}{\logl} \left( 1 + o(1) \right) ,
\end{equation}
where $c_g$ is given by a multiple integral.  In fact, we have
\[ c_1 + c_2 + c_3 - c_4 - c_5 = 1 - \int\limits_{\mathbold{\alpha}_2 \in D} \frac{1}{\alpha_1 \alpha_2^2} \omega_0 \left( \frac{1-\alpha_1 - \alpha_2}{\alpha_2} \right) \dif \alpha_1 \dif \alpha_2 . \]
Similar calculations are found in \cite[Chapter 1]{GH}. \newline

Fix $m$ and $g$.  By analogy with the proof of Theorem~\ref{beabomvino}, we can obtain \eqref{regcond3} by showing
\begin{equation} \label{regcond3req1}
\sum_{q \leq N^{2/7-\varepsilon}} \left| \sum_{\substack{N \leq n < N' \\ n \equiv a_q \bmod{q}}} \varrho_g(n) - \frac{1}{\varphi(q)} \sum_{N \leq n < N'} \varrho_g(n) \right| \ll N \logl^{-A}
\end{equation}
for every $A>0$ and
\begin{equation} \label{regcong3req2}
\sum_{q \leq N^{2/7-\varepsilon}} \left| \sum_{\substack{N \leq n < N' \\ n \equiv a_q \bmod{q}}} \varrho_g(n) e ( \gamma nh) \right| \ll N \logl^{-A} 
\end{equation}
for $1 \leq h \leq \logl^{A+1}$ and for every $A > 0$.  Again adapting the argument of Theorem~\ref{beabomvino}, we see that \eqref{regcong3req2} is a consequence of Lemma~\ref{primechardetect}. \newline

For \eqref{regcond3req1}, it suffices to show, recalling Lemma~\ref{splitdiag}, that for arbitrary $\eta_{\chi} \ll 1$ and $Q \leq N^{2/7-\varepsilon}$,
\begin{equation} \label{regcond3req1redprim}
\sum_{Q \leq q < 2Q}  \ \sideset{}{^{\star}} \sum_{\chi \bmod{q}} \eta_{\chi} \sum_{N \leq n < N'} \varrho_g(n) \chi(n) \ll QN\logl^{-A}
\end{equation}
for every $A > 0$.  This can be readily deduced from the Siegel-Walfisz theorem for $Q \leq \logl^{2A}$, so we assume that $Q > \logl^{2A}$. \newline

We apply Lemma~\ref{BaWelem} with
\[ W (n) = \sum_{Q \leq q < 2Q} \ \sideset{}{^{\star}} \sum_{\chi \bmod{q}} \eta_{\chi} \chi(n) \]
if $N \leq n < N'$ and $W(n)=0$ otherwise. \newline

For example, when $g=3$, the left-hand side of \eqref{regcond3req1redprim} is
\[ \sum_{\substack{N \leq p_1 p_2 p_3 n_4 < N' \\ (n_4, P((2N)^{1/7}) = 1 \\ \mathbold{\alpha}_3 \in E_3 \\ (\alpha_1, \alpha_2) \in E_2 \setminus D}} W (p_1 p_2 p_3 n_4) = \sum_{\substack{ \mathbold{\alpha}_3 \in E_3 \\ ( \alpha_1 , \alpha_2) \in E_2 \setminus D}} S^* (p_1 p_2 p_3, (2N)^{1/7} ) . \]
We shall show that \eqref{type1sumest} and \eqref{type2sumest} hold with $Y = QN \logl^{-A-3}$, $c = 4/7$ and $d=1/7$. (We could reduce the constraints on $c$ and $d$, but that would not be useful in the present context.)  Once we have done this, we can follow the proof of Lemma~\ref{primechardetect} to prove \eqref{regcond3req1redprim}. \newline

To prove \eqref{type1sumest}, we use the Polya-Vinogradov bound for character sums to obtain
\begin{equation*}
\begin{split}
 \sum_{m \leq 2N^{4/7}} \sum_k W(nk) & = \sum_{m \leq 2N^{4/7}} a_m \sum_{Q \leq q < 2Q} \sideset{}{^{\star}} \sum_{\substack{\chi \bmod{q} \\ N \leq mk < N'}} \eta_{\chi} \chi(mk) \\
 & \ll \logl \sum_{m \leq 2N^{4/7}} \sum_{Q \leq q < 2Q} q^{1/2} \ll \logl Q^{3/2} N^{4/7-\varepsilon} \ll QN \logl^{-A-3} .
 \end{split}
 \end{equation*}

Now to prove \eqref{type2sumest}, we note that by the method of \cite[Section 3.2]{GH} mentioned earlier, it suffices to show that
\[ \sum_{M \leq m < 2M} a_m \sum_{K \leq k < 2K} b_k W(mk) \ll QN \logl^{-A} \]
whenever $|a_m| \leq 1$ and $|b_k| \leq \tau(k)$, $N^{4/7} \ll M \ll N^{5/7}$, $MK \asymp N$.  That is, it suffices to show that
\begin{equation} \label{beforelargesieve}
\sum_{Q \leq q < 2Q} \ \sideset{}{^{\star}} \sum_{\chi \bmod{q}} \left| \sum_{M \leq m < 2M} a_m \chi(m) \right| \left| \sum_{K \leq k < 2K} b_k \chi(k) \right| \ll Q N \logl^{-A} .
\end{equation}
Following the proof of (6) in \cite[Chapter 28]{HD}, the left-hand side of \eqref{beforelargesieve} is
\[ \ll \logl ( M+Q^2)^{1/2} (K+Q^2)^{1/2} \| a \|_2 \| b \|_2 \ll \logl^3 \left( N^{1/2} + M^{1/2} Q + Q^2 \right) N^{1/2} \ll QN \logl^{-A} , \]
since $\logl^3 Q^{-1} N \ll \logl^{3-A} N$, $\logl^3 M^{1/2} N^{1/2} \ll \logl^3 N^{6/7} \ll N \logl^{-A}$ and $\logl^3 Q N^{1/2} \ll \logl^3 N^{11/14} \ll N \logl^{-A}$.  This proves \eqref{regcond3} with the present choice of $\mathcal{A}$, $Y_{g,m}$, etc. \newline

Applying Theorem~\ref{gentheo}, we find that there is a set $\mathcal{S}$ of $t$ primes in $\mathcal{A}$ (and thus of the form $[ \alpha m + \beta ]$) having diameter
\[ D(\mathcal{S}) \leq h_k - h_1 \ll l \log l \]
provided that
\[ M_k > \frac{2t-2}{b (2/7- \varepsilon)} . \]
Here $b$ must have the property
\[ b_{1,m} + b_{2,m} + b_{3,m} - b_{4,m} - b_{5,m}  \geq b > 0 ; \]
that is, 
\[ l_m (c_1 + c_2 + c_3 - c_4 - c_5 ) \geq b \gamma > 0 . \]
We can choose
\[ b = (1-2 \varepsilon) \left( 1- \int\limits_{\mathbold{\alpha}_2 \in D} \frac{1}{\alpha_1 \alpha_2^2} \omega_0 \left( \frac{1-\alpha_1 - \alpha_2}{\alpha_2} \right) \dif \alpha_1 \dif \alpha_2 \right) . \]
Using Lemma~\ref{intbound}, we see that
\[ b > 0.90411. \]
Now we proceed just as the proof of Theorem~\ref{bea1}.  We may choose any $l$ for which
\[ \log ( \varepsilon \gamma l ) \geq \frac{2t-2}{0.90411 (2/7-\varepsilon)} + C \]
for a suitable constant $C$, and now it is a simple matter to deduce that
\[ D( \mathcal{S} ) < C_4 \alpha ( \log \alpha + t) \exp ( 7.743 t) ,\]
where $C_4$ is an absolute constant.
\end{proof}

\noindent{\bf Acknowledgments.}  This work was done while L. Z. held a visiting position at the Department of Mathematics of Brigham Young University (BYU).    He wishes to thank the warm hospitality of BYU during his thoroughly enjoyable stay in Provo.

\bibliography{biblio}
\bibliographystyle{amsxport}

\vspace*{.5cm}

\noindent\begin{tabular}{p{8cm}p{8cm}}
Roger C. Baker & Liangyi Zhao \\
Department of Mathematics & School of Mathematics and Statistics \\
Brigham Young University & University of New South Wales\\
Provo, UT 84602, U. S. A. & Sydney, NSW 2052 Australia \\
Email: {\tt baker@math.byu.edu} & Email: {\tt l.zhao@unsw.edu.au} \\
\end{tabular}
\end{document}